\theoremstyle{plain}
\newtheorem{theorem}{Theorem}[section]
\newtheorem{cor}[theorem]{Corollary}
\newtheorem{lemma}[theorem]{Lemma}
\newtheorem{definition}[theorem]{Definition}
\theoremstyle{remark}
\newtheorem{remark}{Remark}
\newtheorem{example}{Example}
\newcommand{\R}{\mathbb{R}}
\newcommand{\N}{\mathbb{N}}
\newcommand{\Z}{\mathbb{Z}}
\newcommand{\bP}{\mathbb{P}}
\newcommand{\cF}{\mathcal{F}}
\newcommand{\cM}{\mathcal{M}}
\newcommand{\cD}{\mathcal{D}}
\newcommand{\cN}{\mathcal{N}}
\newcommand{\cS}{\mathcal{S}}
\newcommand{\maxt}{\beta_t}
\newcommand{\eps}{\varepsilon}
\newcommand{\BND}{Brownian normal distribution}
\newcommand{\E}{\mathbb{E}}
\newcommand{\dist}{\mathrm{dist}}
\newcommand{\hfun}{h_{t,m}}
\newcommand{\lfun}{\ell_{t,m}}
\newcommand{\tbound}{\delta}
\newcommand{\hfunl}{f_{t,m}}
\newfont{\handw}{cmmi10 scaled 1200}
\newfont{\handws}{cmmi10 scaled 800}
\newcommand{\lw}{\mbox{\handw \symbol{96}}}
\DeclareMathOperator*{\argmin}{argmin}
\newcommand\commentout[1]{}
\begin{document}
  
\title{Diffusion Means in Geometric Spaces}

\author[1]{Benjamin Eltzner\footnote{benjamin.eltzner@mpinat.mpg.de}}
\author[2]{Pernille E. H. Hansen\footnote{pehh@di.ku.dk}}
\author[3]{Stephan Huckemann\footnote{huckeman@math.uni-goettingen.de}}
\author[2]{Stefan Sommer\footnote{sommer@di.ku.dk}}
\affil[1]{\normalsize Max Planck Institute for Multidisciplinary Sciences, Göttingen, Germany.}
\affil[2]{\normalsize Department of Computer Science,	University of Copenhagen, Copenhagen, Denmark.}
\affil[3]{\normalsize Institute for Mathematical Stochastics,	Georg-August-Universität Göttingen, Göttingen, Germany.}

\date{\today}

\maketitle

\begin{abstract}
	We introduce a location statistic for distributions on non-linear geometric spaces, the diffusion mean, serving as an extension and an alternative to the Fr\'echet mean. The diffusion mean arises as the generalization of Gaussian maximum likelihood analysis to non-linear spaces by maximizing the likelihood of a Brownian motion. 
	The diffusion mean depends on a time parameter $t$, which admits the interpretation of the allowed variance of the diffusion. The diffusion $t$-mean of a distribution $X$ is the most likely origin of a Brownian motion at time $t$, given the end-point distribution $X$. 
We give a detailed description of the asymptotic behavior of the diffusion estimator and provide sufficient conditions for the diffusion estimator to be strongly consistent. Particularly, we present a smeary central limit theorem for diffusion means and we show that joint estimation of the mean and diffusion variance rules out smeariness in all directions simultaneously in general situations.
Furthermore, we investigate properties of the diffusion mean for distributions on the sphere $\cS^n$.
Experimentally, we consider simulated data and data from magnetic pole reversals, all indicating similar or improved convergence rate compared to the Fr\'echet mean. Here, we additionally estimate $t$ and consider its effects on smeariness and uniqueness of the diffusion mean for distributions on the sphere. 
\end{abstract}

\noindent\textbf{Keywords}: Diffusion mean, Generalized Fr\'echet mean, Geometric statistics, Maximum likelihood estimation, Spherical statistics

\section{Introduction}

In many systems of interest, measured quantities cannot be easily represented in a Euclidean space. Important examples include directional data on a circle or sphere, cf. \citet{MJ2000} and landmark shapes, cf. \citet{S1996,DM16}. For a statistical treatment of such data, it is necessary to generalize statistical concepts from Euclidean spaces to more general spaces.

The mean, which is an important location statistic, was generalized to metric spaces by Fr\'echet \citet{frechet_les_1948} whose generalization has had a lasting impact on the statistics literature. For this reason, in statistics, this mean is called the \emph{Fr\'echet mean} 
while in the field of geometry it is referred to as the \emph{barycenter}. It is defined for random variables $X$ on metric spaces $(Q,d)$ as minimizers of the Fr\'echet function, i.e. the set of points $E(X) = \argmin_{y\in Q}\E[d(X,y)^2]$. A common example is the \emph{intrinsic mean}, also called the \emph{Riemann center of mass} for a smooth manifold $Q$ with geodesic distance $d$, leading to many important results regarding uniqueness \citet{karcher_riemannian_1977,kendall_probability_1990,Gr05,Afsari11},
strong consistency of estimators \citet{ziezold_expected_1977,bhattacharya_large_2003,huckemann_intrinsic_2011,HuckemannEltzner2018,schotz_strong_2020,evans_strong_2020}
and central limit theorems (CLTs) \citet{bhattacharya_large_2005,renlund_limit_2011,huckemann_inference_2011,eltzner_smeary_2018}. 

While we will briefly touch on other types of Fr\'echet means below, to keep language simple, in this entire article, the standalone term \emph{Fr\'echet mean} always refers to the intrinsic mean.

While the Fr\'echet mean reduces to the usual mean vector on Euclidean spaces, it is by far not the only generalization that satisfies this property. A widely used alternative mean concept exists on embedded submanifolds $\cM \subset \mathbb{R}^k$. There one can define an \emph{extrinsic mean} as the shortest distance orthogonal projection in the ambient space 
of the mean in $\mathbb{R}^k$ to $\cM$. This mean 
uses not the squared geodesic distance but the squared chordal distance, which is simply the distance in $\mathbb{R}^k$ \citet{HL96,HL98,bhattacharya_large_2003,bhattacharya_large_2005,Hotz2013}. We discuss the relation of intrinsic and extrinsic means to the proposed diffusion means below. Also relying on embeddings, residual, Ziezold and Procrustean means have been defined 
\citet{Gow,Z94,MJ2000,LeKume2000,H_meansmeans_12,DM16}.
On Euclidean spaces, embedded in a higher dimensional Euclidean space, all of theses 
reduce to the usual mean vector: the expected vector value.

The mean value concept was extended further by \citet{ziezold_expected_1977} to quasi-metric spaces and later the \emph{generalized Fr\'echet means} \citet{huckemann_intrinsic_2011,huckemann_inference_2011} 
were introduced for continuous functions $\rho:Q\times P \to [0,\infty)$ on topological spaces $Q$ and $P$, with $Q$ being the data space and $P$ the descriptor space. Generalized Fr\'echet means can be understood, 
e.g. as geodesics or nested lower dimensional subspaces 
\citet{HuckemannEltzner2018}. 
They are defined as minimizing a squared loss function $\rho$ 
linking the data space to the descriptor space
obtaining a meaningful mean set $E^{(\rho)}(X) = \argmin_{p\in P}\E[\rho(X,p)^2]$.

In this paper we introduce \emph{diffusion means} on geometric spaces and give a theoretical account for the definition on connected smooth manifolds. Diffusion means have several desirable properties. Like the Fr\'echet mean, they reduce to the usual mean vector on Euclidean spaces and are defined purely intrinsically, without reference to an embedding. In fact, they do not even require a Riemannian structure, defining isotropic Brownian motion on a manifold equipped with a reference measure suffices for the definition of diffusion means. Furthermore, they are closely tied to a measure of variance, which can be simultaneously estimated. We show that diffusion means for optimal diffusion variance, unlike the intrinsic Fr\'echet mean, only exhibit smeariness as described by \citet{eltzner_smeary_2018} in extreme corner cases and otherwise satisfy a standard CLT with rate $n^{-1/2}$. In consequence, diffusion means estimated simultaneously with the diffusion variance are more reliable than sample Fr\'echet means.

The diffusion mean value can be realized as a generalized Fr\'echet mean for $P=Q$ that gives a location statistic for random variables on $Q$. The definition is motivated by the idea of generalizing the maximum likelihood estimate (MLE) of the location parameter of the multivariate normal distribution by maximizing the likelihood of a Brownian motion instead of minimizing squared distance. The Euclidean MLE of the location parameter $\mu$ of $\cN(\mu, \Sigma)$ is independent of $\Sigma$ and coincides with the mean vector, and the family of normal distributions are generated by Brownian motions with transition densities being the heat kernel. There is no single canonical generalization of the normal distribution to manifolds, especially for anisotropic $\Sigma$. However, the isotropic case $\Sigma = t \cdot \textnormal{id}$ can be generalized whenever Brownian motions and heat kernels can be defined. Thus, we base the maximum likelihood estimation on the family heat kernels $p(x,\mu,t)$, because they are the transition densities of Brownian motions on manifolds. Similar to the Euclidean case, the unknown parameters $\mu$ can be considered as the mean value and $t$ as the variance parameter. For a fixed $t>0$, we choose the heat kernel $p$ as our loss function and define the \emph{diffusion $t$-mean set} as the minimizers 
\begin{equation*}
E_t(X) = \argmin_{y\in Q}\E[-\ln p(X,y,t)].
\end{equation*} 
The definition is not limited to spaces with a smooth structure and we mention how the definition can be extended to random variables on graphs by only using the existence of random walks. However, in contrast to Euclidean space, the estimated valued of $\mu$ in general depends on $t$ and uniqueness is not always guaranteed.

In Euclidean space, the variance $t$ does not influence the estimated mean. While diffusion means on manifolds typically depend on $t$, this does not mean that diffusion means for larger $t$ are less dependent on the data. Instead, with increasing $t$ the diffusion process and thus the diffusion mean is influenced by the manifold's geometry on an increasingly larger scale.

The logarithm of the heat kernel is naturally connected to the squared geodesic distance by the limit $\lim_{t \to 0} -2t\ln p(x,y,t) = \dist(x,y)^2$ \citet{hsu_stochastic_2002}, which allows us to consider diffusion means as an extension of Fr\'echet means. We can interpret $t$ as the amount of variance connected to the diffusion mean, where the Fr\'echet mean represents the limit of vanishing diffusion variance. Throughout the paper we consider the similarities and differences between diffusion means and the Fr\'echet mean. To exemplify this, we compare in Figure \ref{BrownianNormalDis} the estimated variance of the Fr\'echet and diffusion mean on different bimodal \BND s. 

\begin{figure}[ht]
	\begin{subfigure}[t]{.45\linewidth}
		\centering
		\includegraphics[height = 6cm,trim={ 3cm 1cm 1cm 1cm},clip]{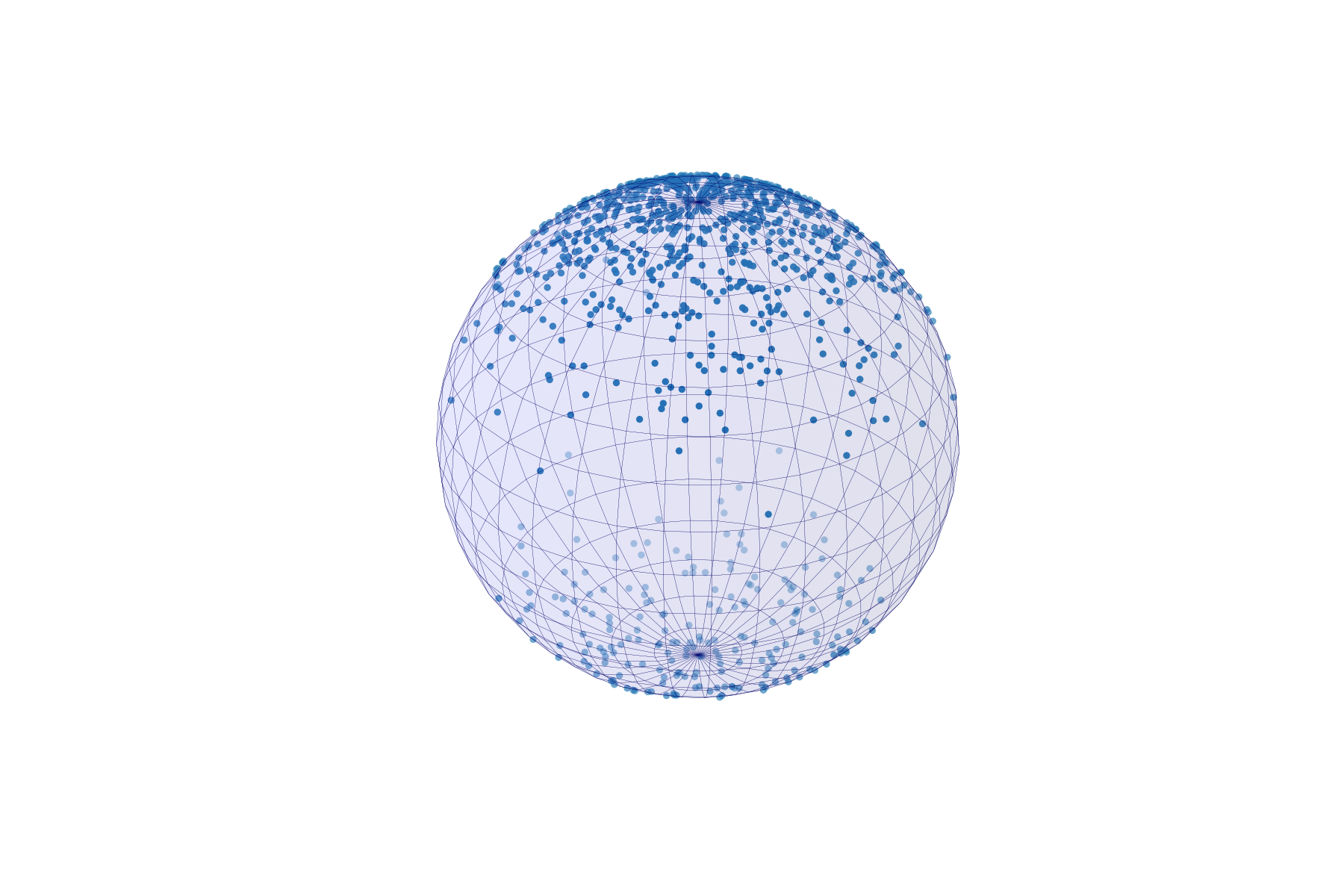}
		\caption{Sample from a bimodal \BND \space on $\cS^2$ with variance $\sigma^2 = 0.3$ at both poles and with weight $0.8$ at the north pole and $0.2$ at the south pole, see Example \ref{ex:bimodal-brownian}.}
		\label{BNDdistribution}
	\end{subfigure}
	\hspace{1em}
	\begin{subfigure}[t]{.45\linewidth}
		\centering
		\includegraphics[height = 6cm, trim={0 0cm 0 0cm,clip}]{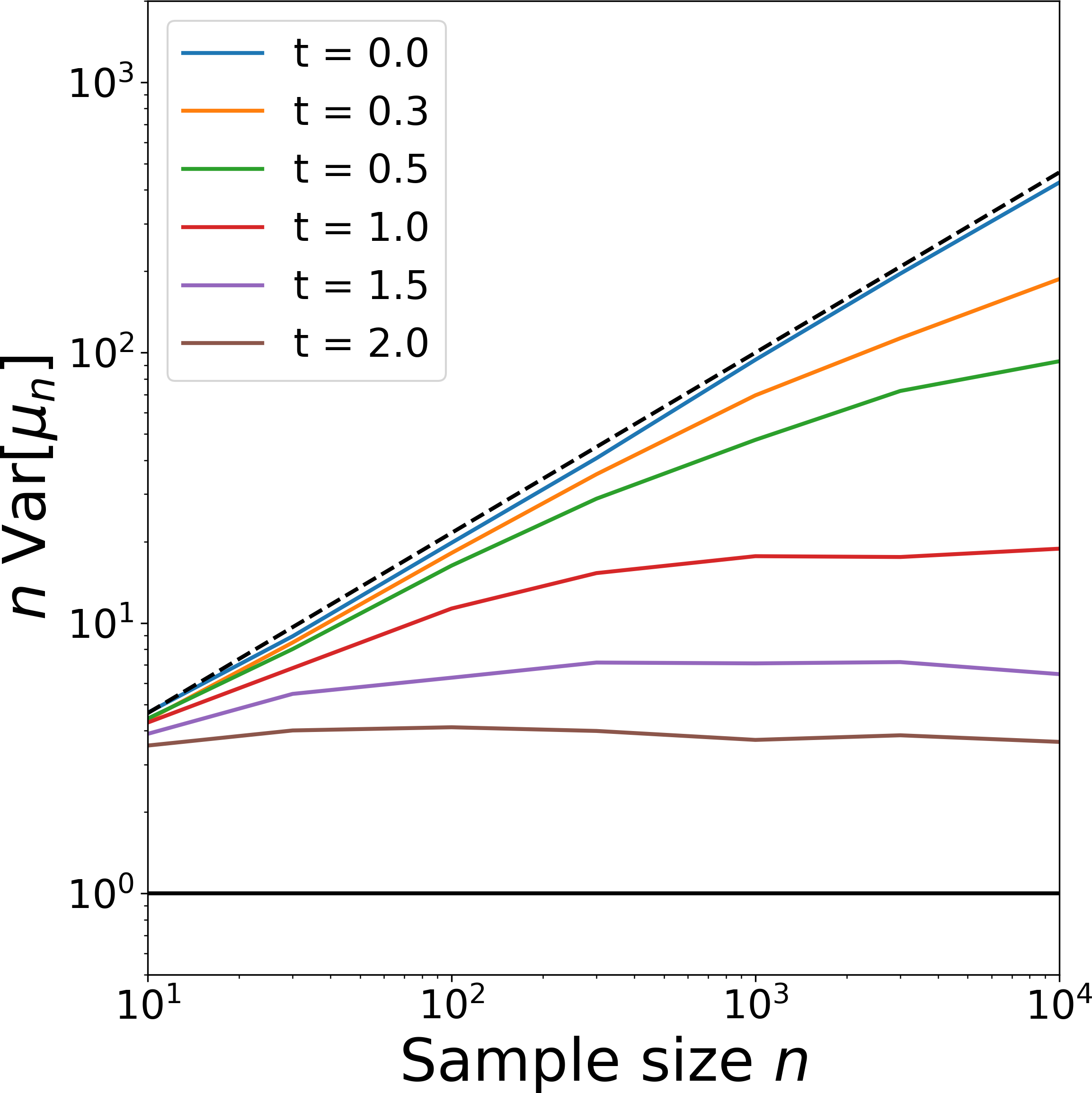}
		\caption{Variances of
			diffusion means scaled with sample size $n$ for $t = 0.3,0.5,1,1.5,2$ and of the Fr\'echet mean ($t=0$) for the distribution in Figure 1~(a). Horizontals indicate the usual CLT rate of $1/\sqrt{n}$ , the black horizontal indicates the standard CLT under absence of finite sample smeariness and the dashed line indicates $2$-smeariness.}
		\label{BNDestimates}
	\end{subfigure}
	\caption{Panel~(b) compares the convergence rate of the diffusion means for $t = 0.4,0.6,1,2,4$ and the Fr\'echet means on the \BND\ from Panel~(a). With increasing $t$, the blue curve (Fr\'echet mean) follows $2$-smeariness, whereas the curves for the diffusion means approach the usual CLT rate.}
	\label{BrownianNormalDis}
\end{figure}
By sampling from a Brownian motion on $\cS^2$ with variance $\sigma>0$ starting at the north pole $\mu$ and south pole $-\mu$ respectively, we obtain samples from two \BND s. Combining the two, we get a bimodal \BND\ by sampling from the northern distribution with probability $1-\alpha$ and from the southern with probability $\alpha$. The plots in Figure~\ref{BrownianNormalDis} show the result of estimating the Fr\'echet mean and the diffusion mean for $t = 0.4,0.6,1,2,4$ when setting $\sigma = 0.3$ and $\alpha = 0.2$. The Fr\'echet sample mean is unique at $\mu$, but the estimator appears to converge to the population mean more slowly than with the standard CLT rate $1/\sqrt{n}$. 

This slower rate has been termed \emph{smeariness} by \citet{HuckemannEltzner2018} and among others, it lowers the power of asymptotic tests.
We see a clear improvement in the convergence rate for the diffusion means: Their curves approach a horizontal above the usual CLT rate. This phenomenon has been coined \emph{finite sample smeariness} (FSS) by \citet{hundrieser_finite_2020}, which, among others, still negatively affects powers asymptotic test. Clearly, the increase of FSS of diffusion means wanes with increasing $t$.

Contrary to the Fr\'echet mean, but similar to the extrinsic mean, a unique diffusion mean admits a positive probability mass on the cut locus. In \citet{hotz_intrinsic_2015}, it was shown that for distributions on the circle $\cS^1$, a unique Fr\'echet mean implies no point mass on the cut locus of the mean, and this result has later been extended further to distributions on complete and connected manifolds \citet{le_measure_2014}. On the sphere $\cS^m$ of dimension $m\geq 2$, this means that a point $\mu$ cannot be the Fr\'echet mean of a distribution with nonzero probability mass in the point $-\mu$. 
\begin{figure}[ht]
	\centering
	\begin{subfigure}[t]{.48\linewidth}
		\def\svgwidth{1\linewidth}
\begingroup%
  \makeatletter%
  \providecommand\color[2][]{%
    \errmessage{(Inkscape) Color is used for the text in Inkscape, but the package 'color.sty' is not loaded}%
    \renewcommand\color[2][]{}%
  }%
  \providecommand\transparent[1]{%
    \errmessage{(Inkscape) Transparency is used (non-zero) for the text in Inkscape, but the package 'transparent.sty' is not loaded}%
    \renewcommand\transparent[1]{}%
  }%
  \providecommand\rotatebox[2]{#2}%
  \newcommand*\fsize{\dimexpr\f@size pt\relax}%
  \newcommand*\lineheight[1]{\fontsize{\fsize}{#1\fsize}\selectfont}%
  \ifx\svgwidth\undefined%
    \setlength{\unitlength}{260.84391346bp}%
    \ifx\svgscale\undefined%
      \relax%
    \else%
      \setlength{\unitlength}{\unitlength * \real{\svgscale}}%
    \fi%
  \else%
    \setlength{\unitlength}{\svgwidth}%
  \fi%
  \global\let\svgwidth\undefined%
  \global\let\svgscale\undefined%
  \makeatother%
  \begin{picture}(1,0.72155752)%
    \lineheight{1}%
    \setlength\tabcolsep{0pt}%
    \put(0,0){\includegraphics[width=\unitlength]{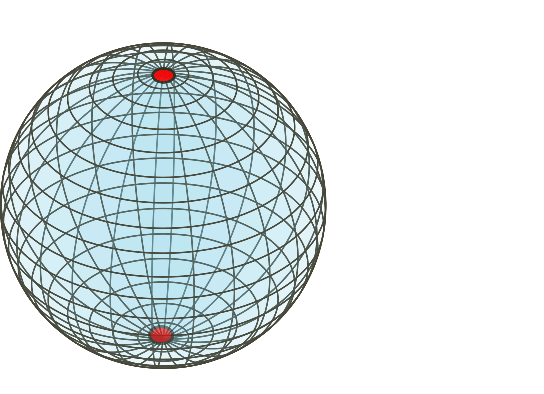}}%
    \put(0.5047638,0.05709254){\color[rgb]{0,0,0}\makebox(0,0)[lt]{\lineheight{1.25}\smash{\begin{tabular}[t]{l}$P(X = -\mu) =\alpha$\end{tabular}}}}%
    \put(0.49913394,0.59125284){\color[rgb]{0,0,0}\makebox(0,0)[lt]{\lineheight{1.25}\smash{\begin{tabular}[t]{l}$P(X = \mu) = 1-\alpha$\end{tabular}}}}%
    \put(0.18128057,0.65037018){\color[rgb]{0,0,0}\makebox(0,0)[lt]{\lineheight{1.25}\smash{\begin{tabular}[t]{l}$\mu$\end{tabular}}}}%
    \put(0.16803139,0.01172489){\color[rgb]{0,0,0}\makebox(0,0)[lt]{\lineheight{1.25}\smash{\begin{tabular}[t]{l}$-\mu$\end{tabular}}}}%
  \end{picture}%
\endgroup%

		\caption{The two pole distribution on $\cS^2$ concentrated at north pole $\mu$ (red) and south pole $-\mu$ (red) \\ with $P(X = \mu)= 1-\alpha$ and $P(X =-\mu)=\alpha$.} 
		\label{2poledis}
	\end{subfigure}\hfill
	\begin{subfigure}[t]{.5\linewidth}
		\centering
		\includegraphics[height =5.3cm, trim={0.3cm 0 1.3cm 0},clip]{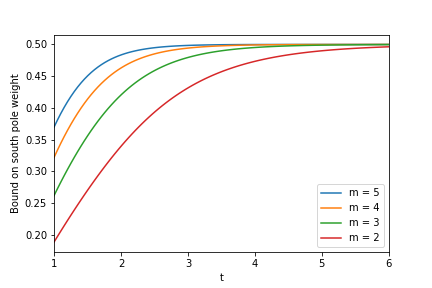}
		\caption{The upper bound $\Lambda_m(t)$ for which the diffusion $t$-mean of the 2 pole distribution is $\mu$ for $m=2,3,4,5$ and $t>\delta(m)$.}
		\label{2polesmeary}
		\label{introfig}
	\end{subfigure}\hfill
	\caption{The Fr\'echet mean of the two pole distribution on $\cS^m$, detailed in Example \ref{ex:two_points1}, displayed in (a) for $m=2$ does not exist 
	for any $\alpha>0$ as the Riemann center of mass is a latitude ring. For $t>\delta(m)$, the diffusion $t$-mean is $\mu$ whenever $\alpha$ is less than the upper bound $\Lambda_m(t)$ displayed in (b), converging to $\frac12$ for all $m\geq 2$.}
	\label{IntroFig}
\end{figure}

We show that the diffusion means are compatible with positive mass on the cut locus by considering the spherical distribution $P(X = \mu)= 1-\alpha$ and $P(X =-\mu)=\alpha$ in Figure \ref{IntroFig}~(a), which we will refer to as the \emph{two pole distribution}. For $t>1.1$, we show that the diffusion $t$-mean is $\mu$ whenever $\alpha\leq \Lambda_m(t)$ for a bound $\Lambda_m(t)$ satisfying $\lim_{t \to \infty}\Lambda_m(t)\to \frac{1}{2}$ for all $m\geq 2$.

From a practical viewpoint, a response to the properties such as smeariness of the Fr\'echet mean is often to use the extrinsic mean. However, the extrinsic mean is per definition dependent on the choice of embedding to which there is often no canonical choice. Compared to this, the diffusion mean being an intrinsic construction is not dependent on an arbitrary choice of embedding while simultaneously often exhibiting less smeariness compared to the Fr\'echet mean. For the circle and the sphere, we show in Section \ref{sec:diff-means} that the diffusion mean for $t \to \infty$ converges to the extrinsic mean in the standard isometric embeddings. Thus, in these spaces, the diffusion means give an intrinsic interpretation of extrinsic means.

While the Fr\'echet mean is the center in the sense of minimizing squared distances, the diffusion mean, per its definition, should be interpreted as the most likely explanation of the data under the statistical model of Brownian motion. Since Brownian motions arise in the limit of random walks assuming only i.i.d. steps, this is arguably a very natural model of the dispersion between mean and observations with very weak assumptions on the probabilistic structure. In a sense much weaker than the Fr\'echet mean which measures dispersion between mean and observations by the energy of smooth geodesics. Both concepts generalize the Euclidean expected value to more general spaces, conserving in one case a metric property (Fr\'echet mean) and in the other case a probabilistic property (diffusion mean).
Using a likelihood criterion for defining means on manifolds has previously been explored in \citet{said_extrinsic_2012} on compact Lie groups, in \citet{sommer_bridge_2017} on landmark manifolds and general Riemannian manifolds \citet{sommer_modelling_2016} \citet{sommer_anisotropic_nodate}, and also in probabilistic principal component analysis \citet{zhang_probabilistic_2013}\citet{NyeTangWeyenbergYoshida2017}. The contributions of this paper are 
\begin{enumerate}
	\item A theoretical account of diffusion means on geodesically and stochastically complete connected Riemannian manifolds with bounded sectional curvature, and we explain how diffusion means can be defined on Lie groups lacking a bi-invariant Riemannian metric. 
	\item An exploration of the diffusion mean estimator, in particular the asymptotic behavior in terms of consistency and convergence rate, and an investigation of the effect of joint estimation of mean and diffusion variance on the convergence rate, where the latter essentially rules out smeariness.
	\item Proving that diffusion means admit positive mass on the cut locus.
	\item Showing that for zero diffusion, diffusion means reduce to Fr\'echet means.
	\item Several examples of improved convergence rate for bimodal distributions on $\cS^n$.
\end{enumerate}
In Section 2 we cover some basic background knowledge on differential geometry and geometric statistics to set the scene for the upcoming sections. A formal account of the diffusion means can be found in Section 3, covering both geodesically and stochastically complete connected Riemannian manifolds and other possible extensions, 
which are motivated by the possible application to computational anatomy and network theory, respectively. Furthermore, we prove that the diffusion mean admits positive mass on the cut locus contrary to the Fr\'echet mean.
 
Diffusion means, as MLEs, directly allow for the development of Bayes methods. Empirical Bayes methods based on shrinkage for Fr\'echet means in nonpositive curvature contexts studied by \citet{mccormack2021equivariant,yang2020empirical} could thus be extended.

An estimator of the diffusion mean is introduced in Section 4. We consider the estimator's asymptotic behavior and provide sufficient conditions for it to be strongly consistent in the sense of Ziezold (ZC) and Bhattacharya and Patrangenaru (BPC). We also provide sufficient conditions for the generalized CLT to hold for the diffusion estimator, thus giving conditions for $k$-smeariness of the estimator. 
We consider various bimodal distributions on $\cS^m$ and calculate the diffusion $t$-means and estimate the asymptotic convergence rate for various $t>0$. Furthermore, we 
give precise asymptotic results for the convergence of diffusion means to Fr\'echet means as $t\to 0$. The results and examples of the paper point to the importance of $t$ and we conclude the section by discussing the estimation of $t$ and the joint estimation of $\mu$ and $t$ parameters. This includes two examples of estimating $t$, one indicating that we are able to estimate the true variance of a \BND. We show that joint estimation of mean and diffusion variance rules out smeariness in all directions in general situations. We illustrate this by reducing finite sample smeariness in geomagnetism data. Finally, we show for the geomagnetism data as well as for wind direction data on the circle that, in consequence, the power of hypothesis tests increases.

\section{Background}
\subsection{Differential Geometry}
In this section we give a brief introduction to some of the essential notation and definitions in differential geometry. 
A Riemannian $m$-manifold $(\cM,g)$ is a smooth $m$-dimensional manifold $\cM$ equipped with a Riemannian metric $g$, i.e. a collection of inner products $g_y(u,v)= \langle u,v\rangle_y$ on each tangent space $T_y\cM$, such that $y\mapsto \langle u(y),v(y)\rangle_y$ is smooth for all sections $u,v$ of $T\cM$. 
Unless the dimension $m$ and metric $g$ are important, we omit these to simplify notation. The inner product induces norms on each tangent space, thus allowing for the definition of curve length $L(\gamma)= \int_0^1 \big|\big|\frac{d}{dt}\gamma(t) \big|\big|_{\gamma(t)}dt$ of a curve $\gamma:[0,1]\to \cM$. A curve is called a \emph{geodesic} if it is locally length minimizing everywhere. By assuming that $\cM$ is complete, there exists at least one length minimizing geodesic between every pair of points $x,y\in \cM$ and the Riemannian metric induces a distance $\dist(x,y)= \min_\gamma L(\gamma)$ where the minimum is taken over all geodesics joining $x$ and $y$. 
 
For each tangent space we define the \emph{exponential map} $\exp_y:T_y\cM\to \cM$ as the function mapping a tangent vector $v\in T_y\cM$ to the point reached in unit time by moving along the unique geodesic starting in $y$ with tangent vector $v$. The exponential map is a diffeomorphism in a star-shaped neighborhood of zero and we denote the maximal domain on which it is diffeomorphic as $\cD(y)\subset T_y\cM$. The image $\exp_y(\cD(y))\subset \cM$ covers all of $\cM$ except for the \emph{cut locus} $\mathcal{C}(y)$, which are all points $x$ such that a minimal geodesic from $y$ to $x$ is not longer minimal beyond $x$. Since the exponential map is a diffeomorphism, it has an inverse $\log_y:\cM\backslash \mathcal{C}(y)\to T_p\cM$ which again is a diffeomorphism, and it can therefore define a local 
coordinate system around $y$ whenever $T_y\cM$ is equipped with a 
basis.

The notion of Brownian motion extends to Riemannian manifolds, where it can be characterized in numerous ways. One way is through the minimal heat kernel, as it is the transition density of a Brownian motion. The heat kernel is the fundamental solution to the heat equation $\frac{d}{dt}u(x,y,t) = \frac{1}{2}\Delta_x u(x,y,t)$ where $\Delta$ is the Laplace Beltrami operator on $\cM$. The heat equation is not guaranteed to have a unique solution or any solutions at all. In order to ensure existence, we only consider stochastically complete manifolds as defined in the following definition. See \citet{hsu_stochastic_2002} for more detail. 
\begin{definition}\label{heatdef}
A Riemannian manifold $\cM$ is said to be \emph{stochastically complete} if there exists a minimal solution $p$ of the heat equation $\frac{d}{dt}u(x,y,t) = \frac{1}{2}\Delta_x u(x,y,t)$ satisfying $\int_\cM p(x,y,t)dy = 1$ for all $x\in\cM$ and $t>0$. We say that $p$ is the \emph{minimal heat kernel} of $\cM$. 
\end{definition}
Some of the more important properties of the minimal heat kernel, that we will be using throughout the paper, include:
\begin{enumerate}
	\item $p$ is strictly positive and $p\in \mathcal{C}^\infty(\cM\times \cM \times \R_+)$.
	\item Symmetry of the first two variables: $p(x,y,t)=p(y,x,t)$.
	\item $p$ is the transition density function of Brownian motion on $\cM$.
\end{enumerate}
Being the transition density of a Brownian motion, the heat kernel serves as a density of a generalized normal distribution on which we base our maximum likelihood method in the following section.

\subsection{Geometric Statistics}\label{GeomStat}
The field of (Euclidean) statistics widely relies on the assumption that the data can be embedded into a vector space. The assumption is both crucial to definitions such as the mean value, but also to the analysis of asymptotic behavior of estimators. Geometric statistics aims at generalizing the successful tools and results of Euclidean statistics to nonlinear spaces such as manifolds. Due to the different structure of nonlinear spaces, it is not always easy to determine unique or canonical generalizations of concepts from Euclidean statistics, and even basic concepts such as mean value and variance have lead to different generalizations.

One of the most common generalization of the mean to manifolds is the \emph{Fr\'echet mean (set)} $M = \argmin_{y\in \cM}\E[d(X,y)^2]$. Results concerning the existence and uniqueness \citet{karcher_riemannian_1977}\citet{kendall_probability_1990} pioneered the field by connecting the effect of curvature to the two properties. The Fr\'echet mean is estimated by the estimator $M_n =\argmin_{y\in \cM}\sum_id(X_i,y)^2$. Under certain assumptions, this is a strongly consistent estimator in the sense of both Ziezold (ZC) and Bhattacharya and Patrangenaru (BPC) \citet{ziezold_expected_1977} \citet{bhattacharya_large_2003}. An estimator $M_n$ of a set $M$ is said to be a strongly consistent estimator in the sense of Ziezold (ZC) if for almost all $\omega$
\begin{equation*}
\cap_{n=1}^\infty \overline{\cup_{k=n}^\infty M_{k}(\omega)} \subset M.
\end{equation*}
It is said to be a strongly consistent estimator in the sense of Bhattacharya and Patrangenaru (BPC) if $M\neq \emptyset$ and for every $\eps>0$ and almost all $\omega$ there is a number $n=n(\eps,\omega)>0$ such that 
\begin{equation*}
\cup_{k=n}^\infty M_k(\omega)\subset B(M,\eps)
\end{equation*}
where $B(M,\eps)$ is the union of balls of radius $\eps$ around all points in $M$. In Section~\ref{sec:asymptotics} we show that the diffusion mean estimator is strongly consistent in both senses under the same conditions as the Fr\'echet mean estimator. We also show that under corresponding assumptions, diffusion means converge for $t\to 0$ to the Fr\'echet means in the sense of Ziezold and Bhattacharya-Patrangenaru. The former is also called \emph{outer limit}, the latter \emph{one-sided Hausdorff limit}, cf. \citet{schotz_strong_2020,evans_strong_2020}.
 
The first generalization of the CLT to general manifolds was achieved by Bhattacharya and Patrangenaru (BP-CLT) \citet{bhattacharya_large_2005}. The BP-CLT establishes sufficient conditions for the Fr\'echet estimator to share the asymptotic behavior of the Euclidean average. However, the Fr\'echet mean estimator does not always exhibit this asymptotic behavior. It was first discovered for certain distributions on the circle \citet{hotz_intrinsic_2015}, that the estimator converged at a rate $n^{-\alpha}$ for $\alpha <1/2$ to a non-normal distribution. In order to theoretically account for this behavior, the definition of smeariness for estimators on manifolds and a generalized CLT was formulated by \citet{eltzner_smeary_2018}. The theorem does not only generalize to means of general loss functions $\E[\rho(y,X)]$, but also allows for general convergence rates $n^{-\alpha}$ and different limiting distribution.

A sequence of random variables $y_n\overset{\bP}{\to} y$ on a Riemannian manifold is said to be $k$-smeary if the sequence of random real valued vectors $n^{\frac{1}{2(k+1)}}\log_{y}(y_n)$ converges to a non-deterministic random vector in $\R^m$, see \citet{eltzner_smeary_2018} for more detail. When $k=0$, the convergence rate coincides with the usual rate $\sqrt{n}$ of the Euclidean average. 

The cornerstone in statistics underlying the CLT is the Gaussian normal distribution, due to its frequent occurrence in real data, but also due to its appearance in mean value estimation. One generalization of the normal distribution is the Normal Law \citet{pennec_intrinsic_2006}, which utilizes the exponential map to map a normal distribution on $\R^m$ to the manifold. The \BND\ uses the natural connection between the Brownian motion and normal distribution in Euclidean spaces. The Brownian normal probability distribution coincides with the end point distribution of a Brownian motion, thus we can generate samples from the generalized normal distribution by sampling from a Brownian motion.

\section{Diffusion means}\label{sec:diff-means}
Throughout this paper $\cM$ denotes a geodesically and stochastically complete and connected Riemannian manifold with bounded sectional curvature. This ensures existence of a minimal heat kernel $p$ that is bounded.
  
In this section we define the diffusion mean of random variables on $\cM$ and we describe how the diffusion means connect to both the Fr\'echet mean and the generalized Fr\'echet means. We provide a registry of manifolds where a closed form for the heat kernel have been obtained. This includes the hyperspheres and the hyperbolic spaces. In Subsection~\ref{subsec:diff_means_on_spheres} we focus on distributions on the sphere and calculate the diffusion means for the two pole distribution in Figure \ref{2poledis}. We conclude the section by mentioning other types of spaces for which the diffusion means can be defined in Subsection~\ref{subsec:diff_means_on_lie_and_graphs}.

The definition of diffusion means is inspired by the Euclidean maximum likelihood estimation and the mean is one of the parameters of a normal density which can be estimated from data. In the general case of a manifold data space, we consider the generalized normal distribution and therefore define diffusion means as the most likely origin points of a Brownian motion hitting the observed data. 
 
For the random variable $X$ on $\cM$, with underlying probability space $(\Omega, \cF, \bP)$, we define the \emph{log-likelihood function} $L_t:\cM \to \R$ to be 
\begin{equation}\label{negloglikelihood}
L_t(y) = \E[-\ln(p(X,y,t))]
\end{equation}
for each $t>0$ where $p$ denotes the minimal heat kernel from Definition \ref{heatdef}. This gives rise to the diffusion means as the global minima of the log-likelihood function, i.e. the points maximizing the log-likelihood of a Brownian motion. Notice that we negate the logarithmic heat kernel in the log-likelihood function, in order to characterize diffusion means as minima instead of maxima. 
\begin{definition}
	With the probability space $(\Omega, \cF,\bP)$, let $X$ be a random variable on $\cM$ and fix $t>0$. The \emph{diffusion $t$-mean set} $E_t(X)$ of $X$ is the set of global minima of the log-likelihood function $L_t$, i.e. 
	\begin{equation*}
	E_t(X) = \argmin_{y\in \cM}\E[-\ln(p(X,y,t))].
	\end{equation*}
	If $E_t(X)$ contains a single point $\mu_t$, we say that $\mu_t$ is the \emph{diffusion $t$-mean} of $X$. 
\end{definition}
Notice, that for each $t>0$ there is an associated mean set $E_t$, leading to a parametric family of mean sets. In contrast, the Fr\'echet mean does not have an additional parameter and one gets only one mean set. The diffusion means provide a natural extension of the Fr\'echet mean, due to the convergence of the diffusion means set to the set of classical Fr\'echet means in the sense of Ziezold and in the sense of Bhattacharya and Patrangenaru, for which we present sufficient conditions later in Subsection \ref{shorttime}. 

The time parameter $t$ can be interpreted as a \emph{variance} parameter. This is because a Brownian motion is more likely to travel far on the manifold for large $t$, whereas it is more likely to be centralized around its origin for small $t$. This means that for small $t$, the diffusion $t$-mean is sensitive to probability mass far from the mean, which is also the case for the Fr\'echet mean. For larger $t$ however, this sensitivity is diminished, which we exemplify later in Example \ref{ex:two_points1}. 
\begin{remark}\label{RmkOnBounded}
	In the language of generalized Fr\'echet means from \citet{huckemann_intrinsic_2011}, the definitions above correspond to choosing $\rho(x,y)^2 = - \ln(p(x,y,t)\maxt^{-1})$ for a fixed $t>0$ and constant $\maxt >0$. For this to make sense however, we must ensure existence of a $\maxt>0$ such that $\ln(p(x,y,t)\maxt^{-1})\leq 0$. If for all $t>0$, the heat kernel $p(x,y,t)$ is bounded, then letting $\maxt$ be a bound on $p(x,y,t)$ for each $t>0$ solves this issue since
	\begin{align*}
	\argmin_{y\in \cM} \E [- \ln(p(X,y,t)\maxt^{-1})] = \argmin_{y\in \cM} \E [- \ln(p(X,y,t))]. 
	\end{align*}
	The heat kernel is bounded on compact manifolds, but not necessarily on non-compact complete manifolds. It follows from \citet[Theorem 4]{cheng_upper_1981}, that bounded sectional curvature suffices for $\maxt>0$ to exist for all $t>0$, which is why we include this restriction on $\cM$. This condition is not limiting, since the existence of such a constant is also necessary for a diffusion $t$-mean to exist. 
\end{remark}
Instead of fixing $t>0$, we can also minimize the log-likelihood function $(y,t)\mapsto L_t(y)$ as a function of both $t$ and $y$, thus obtaining the optimal diffusion parameters 
\begin{equation}
  D(X) = \argmin_{(y,t)\in \cM\times \R_+} \mathbb E[-\ln p(X,y,t].
\end{equation}
This paper mostly regards the properties and asymptotic behavior of estimating the diffusion mean by itself, but we do include an example of estimating the parameters simultaneously in Section \ref{scn:application}. 

The closed form of the heat kernel has not been obtained for general manifolds. In the following five examples, we take a look at some of the spaces for which a closed form has been obtained. 
\begin{example}
	The heat kernel $p$ on the Euclidean space $\R^m$ is given by the function
	\begin{equation*}
	p(x,y,t) = \left(\frac{1}{(2\pi t)^{m/2}}\right) e^{\frac{-|x-y|^2}{2t}}. 
	\end{equation*}
	for $x,y\in \R^m$ and $t>0$. The diffusion $t$-means of a random variable $X$ do not depend on $t$ and coincide with the expected value $\E[X]$ since 
	\begin{align*}
	\argmin_{y\in \R^m} L_t(y)&=\argmin_{y\in \R^m} \int_{\R^m} -\frac{2}{m}\ln(2\pi t) + \left(\frac{|x-y|^2}{2t}\right)d\bP_X(x) \\
	&= \argmin_{y\in \R^m} \int_{\R^m} |x-y|^2d\bP_X(x)
	= \argmin_{y\in \R^m} \E[(X-y)^2]
	\end{align*}
	Thus, $\mu_t = \E[X]$ for all $t>0$. 
\end{example}
\begin{example}
	The heat kernel on the circle $\cS^1$ is given by the wrapped Gaussian
	\begin{equation}\label{wrap-Gauss:eqn}
	p(x,y,t) = \frac{1}{\sqrt{2\pi t}} \Big( \sum_{k\in \Z}\exp\Big(\frac{-(x-y + 2\pi k)^2}{2t} \Big) \Big)
	\end{equation}
	for $x,y\in \R /2 \pi \Z\cong \cS^1$ and $t>0$, and the log-likelihood function for the random variable $X:\Omega \to \cS^1$ becomes
	\begin{align*}
	L_t(y) &= - \int_{\cS^1} \ln \Big(\frac{1}{\sqrt{2\pi t}} \Big( \sum_{k\in \Z}\exp\Big(\frac{-(x-y + 2\pi k)^2}{2t} \Big) \Big) \Big) d\bP_X(x) \\
	&= \ln(\sqrt{2\pi t})+ \int_{\cS^1} \left[ \frac{(x-y)^2}{2t} - \ln \left(\sum_{k\in \Z}\exp\Big(\frac{-(2\pi k)^2}{2t}\Big)\exp\Big(\frac{-4\pi k (x-y)}{2t} \Big) \right) \right] d\bP_X(x).
	\end{align*}
	Notably, even on this simple space, the $t$-dependence cannot be split off into a separate summand and $\mu_t$ is therefore explicitly dependent on $t$.
\end{example}
\begin{example}
	The heat kernel on the spheres $\cS^{m}$ for $m\geq 2$ can be expressed as the uniformly and absolutely convergent series \citet[Theorem 1]{zhao_exact_2018}
	\begin{equation}\label{sphereheat}
	p(x,y,t) = \sum_{l=0}^\infty e^{-l(l+m-1)\frac12t}\frac{2l+m-1}{m-1} \frac{1}{A_{\cS}^{m}} C_l^{(m-1)/2}(\langle x,y\rangle_ {\R^{m+1}} )
	\end{equation}
	for $x,y\in \cS^{m}$ and $t>0$, where $C_l^\alpha$ are the Gegenbauer polynomials and $A_{\cS}^{m} = \frac{2\pi^{(m+1)/2}}{\Gamma((m+1)/2)}$ the surface area of $\cS^{m}$. For $m=2$, the Gegenbauer polynomials $C^{1/2}_l$ coincide with the Legendre polynomials $P^0_l$ and the heat kernel on $\cS^2$ is 
	\begin{equation}
	p(x,y,t) = \sum_{l=0}^\infty e^{-l(l+1)\frac12t}\frac{2l+1}{4\pi} P^0_l(\langle x,y\rangle_ {\R^3} ).
	\end{equation}
	Again, $\mu_t$ is explicitly dependent on $t$ on these spaces.
\end{example}
\begin{example}
	The heat kernel on the hyperbolic space $\mathbb{H}^n$ can be expressed by the following formulas \citet{grigoryan_heat_1998} for $n\geq 1$. For odd $n =2m+1$, the heat kernel is given by
	\begin{equation*}
	p(x,y,t) = \frac{(-1)^m}{2^m\pi^m}\frac{1}{\sqrt{2\pi t}}\frac{\rho}{\sinh\rho}\left(\frac{1}{\sinh\rho} \frac{\partial}{\partial\rho} \right)^m e^{-m^2t-\frac{\rho^2}{2t}}
	\end{equation*}
	where $\rho = \dist_{\mathbb{H}^n}(x,y)$ and for even $n=2m+2$, it is given by 
	\begin{equation*}
	p(x,y,t) = \frac{(-1)^m}{2^{m+1}\pi^{m+\frac32}}t^{-\frac32}\frac{\rho}{\sinh\rho}\left(\frac{1}{\sinh\rho} \frac{\partial}{\partial\rho} \right)^m  \int_\rho^\infty \frac{se^{-\frac{s^2}{2t}}}{(\cosh s-\cosh \rho)^\frac12}ds.
	\end{equation*}
	Again, $\mu_t$ is explicitly dependent on $t$ on these spaces.
\end{example}
\begin{example} The fundamental solution to the heat equation on a Lie group $G$ of dimension $m$ is \citet{arede_heat_1991}
	\begin{equation*}
	p(x,e,t) = (2\pi t)^{-m/2} \prod_{\alpha\in\Sigma^+} \frac{i\alpha(H)}{2\sin(i\alpha(H)/2)}\exp\left(\frac{\|H\|^2}{2t}+\frac{\|\rho\|^2 t}{2}\right) \cdot E_x(X_\tau >t)
	\end{equation*}
	where $e$ is the neutral element, $x = \exp(Ad(g)H)\in G-\mathcal{C}(e)$ for some $g\in G$, $\Sigma^+$ is the set of positive roots, $\rho = \sum_{\alpha\in\Sigma^+} \alpha$, and $\tau$ is the hitting time of $\mathcal{C}(e)$ by $(x_s)_{0\leq s\leq t}$. Lie groups are relevant data spaces for many application, e.g. in the modeling of joint movement for robotics, prosthesis development and medicine.
\end{example}
The symmetry in $x$ and $y$, which exists by construction, is clearly visible in the examples above. Moreover, when $t>0$ and $x\in \cM$ where $\cM = \R^n, \cS^n$ or $\mathbb{H}^n$, the heat kernel only depends on the geodesic distance \citet{alonso-oran_pointwise_2019}. 

\begin{remark}
	For spaces where a closed form has not been obtained, we can turn to various estimates. We include some of the most important below. 
	\begin{enumerate}
		\item For complete Riemannian manifolds of dimension $m$, we have the asymptotic expansion \citet[Theorem 5.1.1]{hsu_stochastic_2002} 
		\begin{equation*}
		p(x,y,t)\sim \left(\frac{1}{2\pi t} \right)^{\frac{d}{2}}e^{\frac{-d(x,y)^2}{2t}}\sum_{n=0}^{\infty} H_n(x,y)t^n
		\end{equation*}
		on compact subset with $x\notin \mathcal{C}(y)$. Here $H_n$ are smooth functions satisfying a recursion formula \citet{chavel_eigenvalues_1984} with $H_0(x,y) = \sqrt{J(\exp_x)(\exp^{-1}_x(y))}$ and $J$ denoting the Jacobian.
		\item Assuming also non-negative Ricci curvature, the heat kernel is bounded \citet[Corollary 3.1]{li_parabolic_1986}
		\begin{equation*}
		p(x,y,t) \leq \frac{C(\eps)}{\text{Vol}(x,\sqrt{t})}\exp \left(-\frac{\dist(x,y)^2}{(2+\eps)t}\right)
		\end{equation*}
		for $0<\eps<1$ and constant $C(\eps)$, where $\text{Vol}(x,\sqrt{t})$ denotes the volume of the ball around $x$ of radius $\sqrt{t}$.
		\item Using bridge sampling, the heat kernel can be estimated by the expectation over a guided process \citet{delyon_simulation_2006}, \citet{sorensen_importance_2012}. An example of this is the estimated heat kernel on high-dimensional landmark manifolds \citet{sommer_bridge_2017}. 
		This approach can be carried even further to allow direct sampling of an approximation of the diffusion mean, thus avoiding nested optimization as is in general needed for the Fr\'echet mean \citet{jensenMeanEstimationDiagonal2022}.
	\end{enumerate}

  It is natural to compare the computational cost of estimating the Fr\'echet mean and the diffusion mean. The Fr\'echet mean is by no means simple to compute in spaces without closed form solutions of geodesic distances. In such spaces, the computation in general involves iterative minimization of the Fr\'echet variance which in turn requires evaluating the Riemannian logarithm at all data points. When these logarithms have no closed form, this results in a nested optimization problem which can be very expensive to solve numerically. The use of bridges where the heat kernel is approximated via Monte Carlo sampling of the Brownian motion conditioned on hitting the data points is quite efficient, even on high-dimensional manifolds. The computational expense is not directly comparable to computing the Riemannian logarithm (it depends on factors including the numerical bridge simulation scheme and number of simulations per bridge). However, as a practical example, the diffusion mean including the variance parameter $t$ can be found via bridge sampling on $\mathbb S^2$ (not using the heat kernel expansion) for 100 data points in roughly 15 s. on a laptop. In addition to this, the diffusion mean can be approximated by direct sampling, see \cite{jensenMeanEstimationDiagonal2022}. This is in general much faster than computing the Fr\'echet mean, at the expense of additional uncertainty in the mean estimates.\footnote{Example code for both iterative optimization using bridge sampling and direct sampling can be found at \url{https://bitbucket.org/stefansommer/jaxgeometry/src/main/diffusion_mean.ipynb}.}
\end{remark}

\subsection{Diffusion means on spheres}\label{subsec:diff_means_on_spheres}

We here take a closer look at the diffusion means on the hyperspheres $\cS^{m}$ for $m\geq 1$. 

\subsubsection{Relation to Fr\'echet mean and extrinsic mean}

We prove that the diffusion mean interpolates between the Fr\'echet mean and the extrinsic mean on spheres as the diffusion time parameter $t$ varies between $0$ and $\infty$. 
The result is illustrated for simulated data in Figure \ref{fig:int-ext}.
The auxiliary lemmas \ref{lem:wrapped-normal}, \ref{lem:rho1-derivative}, and \ref{lem:rhom-derivative} can be found in Appendix~\ref{subsec:thm32}.

\begin{figure}[ht]
	\centering
	\includegraphics[width=0.4\linewidth]{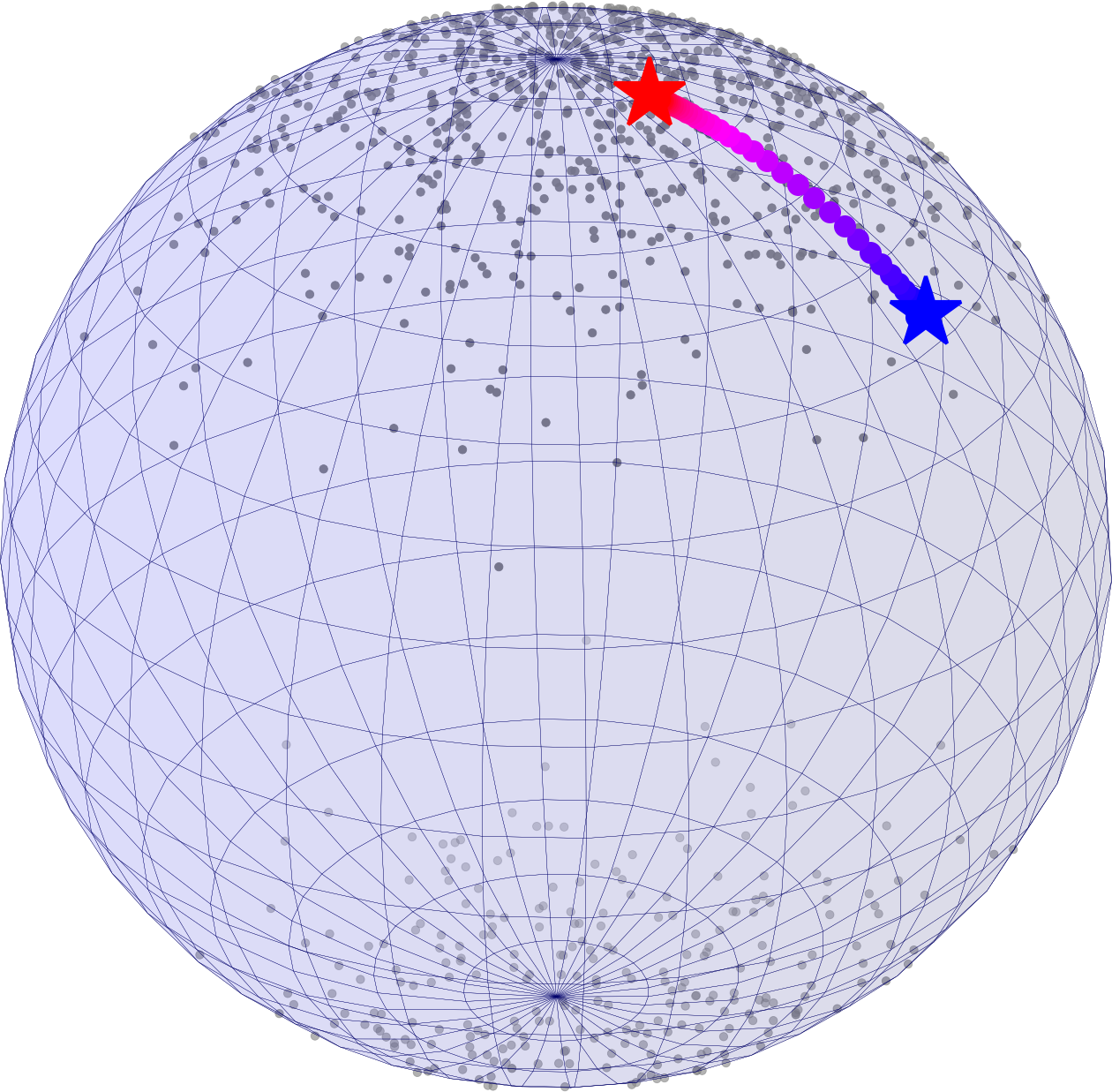}
	\caption{For the simulated sample, displayed in gray, the blue star indicates the intrinsic mean and the red star indicates the extrinsic mean. The colored points indicate diffusion means for different $t$, where small values of $t$ have a bluer hue and large values of $t$ have a redder hue.}
	\label{fig:int-ext}
\end{figure}
\begin{theorem} \label{thm:diff-ext}
  Consider a random variable on $S^m$ with $m \ge 1$. Then the diffusion mean set converges to the extrinsic mean set for $t \to \infty$.
\end{theorem}

\begin{proof}
    For $S^1$, we use the form for the wrapped normal density shown in Lemma \ref{lem:wrapped-normal} and set $s := e^{-t/2}$, so the limit $t \to \infty$ is equal to the limit $s \to 0$. Then, consider the function
    \begin{align}
      \rho(x,y,s) :=& - 1/s \log ( 2 \pi p(x,y,t(s)) ) \nonumber\\
      =& - 1/s \sum_{k=1}^\infty \left( \log\left(1-s^{2k}\right) + \log \left( 1 + 2 s^{2k-1} \cos(x-y) + s^{4k-2}\right) \right) \label{eq:rho-s1} \, .
    \end{align}
    In Lemma \ref{lem:rho1-derivative} we show that the derivative $\frac{d}{ds} \rho(x,y,s)$ exists and is uniformly bounded for any $x,y \in [-\pi, \pi)$ and $s \in [0,1/2]$.
    
    For $S^m$ with $m > 1$, let $s := e^{-\frac{m}{2}t}$ and equivalently $t(s) := - \frac{2}{m} \log(s)$. Then, consider the function
    \begin{align}
      \rho(x,y,s) :=& - 1/s \log \left( A_{\cS}^{m} p(x,y,t(s)) \right)\nonumber\\
      =& - 1/s \log \left(\sum_{k=0}^\infty e^{-k(k+m-1)t(s)/2}\frac{2k+m-1}{m-1} C_k^{(m-1)/2}(\langle x,y\rangle_ {\R^{m+1}} ) \right) \nonumber\\
      =& - 1/s \log \left( \sum_{k=0}^\infty s^{k}s^{\frac{k(k-1)}{m}}\frac{2k+m-1}{m-1} C_k^{(m-1)/2}(\langle x,y\rangle_ {\R^{m+1}} ) \right) \label{eq:rho-sm} \, .
    \end{align}
    In Lemma \ref{lem:rhom-derivative} we show that the derivative $\frac{d}{ds} \rho(x,y,s)$ exists and is uniformly bounded for any $x,y \in S^m$ and $s \in [0,1/2]$.
    
    In both cases, we have for any fixed $s$
    \begin{align*}
      \argmin_{z \in S^m} \limits L_{t(s)}(z) = \argmin_{z \in S^m} \limits \mathbb{E}[-\log(p(X,z,t(s))] = \argmin_{z \in S^m} \limits \mathbb{E}[\rho(X,z,s)] \, .
    \end{align*}
    Furthermore, from the existence and uniform boundedness of the derivative in $s$, we derive a uniform continuity property as follows
    \begin{align*}
      \forall \eps > 0 \, \exists \delta > 0 \, \forall x,y \in S^m \, \forall s \in [0,\delta] \, : \, |\rho(x,y,s) - \rho(x,y,0)| < \eps \, ,
    \end{align*}
    because $S^m$ is compact. Now, we can derive
    \begin{align*}
      \left|\mathbb{E}[\rho(X,z,s)] - \mathbb{E}[\rho(X,z,0)]\right| \le \mathbb{E}[|\rho(X,z,s) - \rho(X,z,0)|] < \mathbb{E}[\eps] = \eps
    \end{align*}
    and thus
    \begin{align*}
      y \in \argmin_{z \in S^m} \limits \mathbb{E}[\rho(X,z,\delta)] \quad \Leftrightarrow \quad &\mathbb{E}[\rho(X,y,0)] < \mathbb{E}[\rho(X,y,\delta)] + \eps = \inf_{z \in S^m} \limits \mathbb{E}[\rho(X,z,\delta)] + \eps\\
      <& \inf_{z \in S^m} \limits \mathbb{E}[\rho(X,z,0)] + 2\eps \, .
    \end{align*}
    It is thus clear that in the limit $\eps \to 0$, the set $\argmin_{z \in S^m} \limits \mathbb{E}[\rho(X,z,\delta)]$ must converge to the set $\argmin_{z \in S^m} \limits \mathbb{E}[\rho(X,z,0)]$ in the Hausdorff topology.
    
    This ensures that
    \begin{align*}
      \lim_{s \to 0} \limits \argmin_{z \in S^m} \limits \mathbb{E}[\rho(X,z,s)] =\argmin_{y \in S^m} \limits \lim_{s \to 0} \limits \mathbb{E}[\rho(X,z,s)] \, .
    \end{align*}
    
    For the circle, we recall Equation \eqref{eq:rho-s1} and use the expansion $\log(1+x) = x + \mathcal{O}(x^2)$
    \begin{align*}
      \rho(x,y,s) =& - \frac{1}{s} \sum_{k=1}^\infty \left( -s^{2k} + \mathcal{O}(s^{4k}) + 2 s^{2k-1} \cos(x-y) + \mathcal{O}(s^{4k-2}) \right) = - 2 \cos(x-y) + \mathcal{O}(s) \, .
    \end{align*}
    Thus $\rho(x,y,0) = - 2 \cos(x-y)$. Embedding $S^1$ in $\mathbb R^2$ via setting $W=(\cos X,\sin X), u = (\cos y,\sin y)$, we have that
    \begin{equation*}
      \argmin_{z\in S^1} \mathbb E[\rho(X,z,0)] =\mathop{\rm argmax}_{u\in \mathbb R^2, \|u\|=1} \mathbb E[\langle W,u\rangle]
    \end{equation*}
    Constrained maximization yields the maximizer $\frac{\mathbb E[W]}{\|\mathbb E[W]\|}$ in case of $\mathbb E[W]\neq 0$ and all of $S^1$ else. In both cases this is the extrinsic mean, as asserted.
    
    For the sphere, to see that the diffusion mean approaches the extrinsic mean as $t \to \infty$, we highlight the lowest orders in $s$ in the series expansion \eqref{eq:rho-sm} and use $\log(1+x) = x + \mathcal{O}(x^2)$ as well as the Gegenbauer polynomial $C_1^{(m-1)/2} = (m-1)x$
    \begin{align*}
      \rho(x,y,s) =& - \frac{1}{s} \log \left( 1 + s (m+1) \langle x,y\rangle_{\R^{m+1}} + \mathcal{O}(s^2) \right) = - (m+1) \langle x,y\rangle_{\R^{m+1}} + \mathcal{O}(s)
    \end{align*}
    so that
    \begin{equation*}
      \argmin_{z\in S^m} \mathbb E[\rho(X,z,0)] =\mathop{\rm argmax}_{z\in \mathbb R^{m+1}, \|z\|=1} \mathbb E[\langle X,z\rangle]\,.
    \end{equation*}
    Constrained maximization yields again the maximizer $\frac{\mathbb E[X]}{\|\mathbb E[X]\|}$ in case of $\mathbb E[X]\neq 0$ and all of $S^m$ else. In both cases this is the extrinsic mean, as asserted.
\end{proof}

Diffusion means describe centers of diffusion processes, which suggests interpreting the intrinsic and extrinsic means as centers of diffusion in the limits of vanishing and diverging diffusion variance $t$, respectively. If one estimates the diffusion means simultaneously with the variance, this will yield a diffusion mean close to the intrinsic mean for very concentrated data and close to the extrinsic mean for very spread out data. Based on this, if the choice is only between intrinsic and extrinsic mean, one may heuristically suggest to prefer intrinsic means as location statistics for concentrated data and extrinsic means for very spread out data.

In \cite{H_meansmeans_12} it was found that for rather concentrated data, the \emph{residual mean} is close to a geodesic passing through the extrinsic and intrinsic mean but on the opposite side of the extrinsic mean from the intrinsic mean. Our findings raise the question for future research, whether diffusion means also connect to residual means.

\subsubsection{Two pole distribution}
We now turn to studying diffusion means of the two pole distribution in Figure \ref{2poledis}, particularly because the behavior of the Fr\'echet mean has been studied for this distribution. 
We begin by introducing some useful notation and the choice of coordinates that will be used throughout the paper. 

We choose non-standard polar coordinates $\theta_1,...,\theta_{m-1}\in [-\pi/2,\pi/2]$ and $\phi \in [-\pi,\pi)$ of $\cS^{m}$
\begin{equation}\label{coordinates}
x = \begin{pmatrix}
- \Big(\prod_{j=1}^{n-2}\cos\theta_j\Big) \cos\phi \\
- \Big(\prod_{j=1}^{n-2}\cos\theta_j\Big) \sin\phi \\
\vdots \\
-\cos\theta_1\cos\theta_2\sin\theta_3 \\
-\cos\theta_1\sin\theta_2 \\
\sin\theta_1
\end{pmatrix}
\end{equation}
such that the north pole $\mu=(0,1,0,...,0)$ has polar coordinates $(0,...,0,-\pi/2)$. Using these coordinates we may, due to rotation symmetry around the north pole, assume w.l.o.g. that any fixed point $y\in \cS^{m}$ has coordinates $(0,...,0,-\pi/2+\delta)$ for some $\delta \in [0,\pi]$. We denote these coordinates by $y_\delta$. 
\begin{remark}\label{heatdecreasing}
	It follows from \citet[Theorem 1.3]{alonso-oran_pointwise_2019}, that the map $\delta\in [0,\pi] \mapsto p(\mu, y_\delta,t)$ is decreasing for each fixed $t>0$, since the geodesic distance is $\dist(\mu, y_\delta)= \arccos\langle \mu,y_\delta\rangle = \delta$ for $\delta\in [0,\pi]$. 
\end{remark}
To simplify the notation of the heat kernel on $\cS^m$ as written in Equation \eqref{sphereheat} and the logarithmic heat kernel, we define the functions $\hfun, \lfun:[-1,1]\to \R$ for each $t>0$ and integer $m\geq 2$ by 
\begin{equation*}
\hfun(x) = \sum_{l=0}^\infty e^{-l(l+m-1)\frac12t}\frac{2l+m-1}{m-1} \frac{1}{A_{\cS}^{m}} C_l^{(m-1)/2}(x), \quad \lfun(x) = \ln \hfun(x).
\end{equation*}
Then $p(x,y,t) = \hfun(\langle x,y \rangle_{\R^{m+1}})$ and the log-likelihood function $L_t$ of a random variable $X$ on $\cS^m$ with density $p_X(x)$ can written in terms of $\lfun$ as 
\begin{equation*}
L_t(y) = \int_{\cS^m} -\lfun(\langle x,y \rangle_{\R^{m+1}}) p_X(x)dx.
\end{equation*}
The following two lemmas uncover some useful properties of $\hfun$ and $\lfun$. The lemmas are proved in Appendix~\ref{appendix:sec3proofs}.
\begin{lemma}\label{loghdiff}
	The derivative $\frac{d}{dx}\lfun(x)$ is positive for all $m\geq 2$ and $t>0$.
	For all integers $m\geq2$, define 
	\begin{equation}\label{boundont}
	\delta(m)= \begin{cases}
	\frac{2}{(m+1)} \ln \left((m+1)+4(m+3)\right), &\text{ if } m=2,3 \\ \ln\left(\frac{32(m+3)}{9(m+1)}\right), &\text{ if }m\geq 4
	\end{cases}
	\end{equation}
	The second derivative $\frac{d^2}{dx^2}\lfun(x)$ is negative for all $m\geq 2$ and $t>\delta(m)$. 
\end{lemma}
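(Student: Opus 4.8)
The plan is to reduce everything to the signs of $\hfun'$ and $\hfun''$. Writing $\lfun = \ln\hfun$ and recalling that the minimal heat kernel is strictly positive, so $\hfun>0$, we have
\begin{equation*}
\frac{d}{dx}\lfun = \frac{\hfun'}{\hfun}, \qquad \frac{d^2}{dx^2}\lfun = \frac{\hfun''\hfun - (\hfun')^2}{\hfun^2}.
\end{equation*}
Thus the first claim is equivalent to $\hfun'>0$, and the second to the pointwise inequality $\hfun''(x)\hfun(x) < (\hfun'(x))^2$ for $x\in[-1,1]$.

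First I would differentiate the Gegenbauer series term by term, which is legitimate by the uniform and absolute convergence in Equation \eqref{sphereheat}, using the identity $\frac{d}{dx}C_l^\lambda = 2\lambda\, C_{l-1}^{\lambda+1}$. With $\lambda = (m-1)/2$ this gives $\frac{d}{dx}C_l^{(m-1)/2} = (m-1)C_{l-1}^{(m+1)/2}$ and $\frac{d^2}{dx^2}C_l^{(m-1)/2} = (m-1)(m+1)C_{l-2}^{(m+3)/2}$. After reindexing, the resulting series are, up to an explicit positive constant, exactly the heat-kernel functions $h_{t,m+2}$ and $h_{t,m+4}$ on the higher-dimensional spheres $\cS^{m+2}$ and $\cS^{m+4}$; concretely
\begin{equation*}
\hfun'(x) = \frac{(m+1)A_{\cS}^{m+2}}{A_{\cS}^{m}}\,e^{-mt/2}\,h_{t,m+2}(x), \qquad \hfun''(x) = \frac{(m+1)(m+3)A_{\cS}^{m+4}}{A_{\cS}^{m}}\,e^{-(m+1)t}\,h_{t,m+4}(x).
\end{equation*}
Since heat kernels are strictly positive, this immediately yields $\hfun'>0$ (hence the first claim) and $\hfun''>0$. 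Moreover, being positive multiples of heat kernels on spheres, the same monotonicity argument as in Remark \ref{heatdecreasing} (which holds on every sphere) shows that $\hfun$ and $\hfun''$ are increasing in $x$ and therefore attain their maxima at $x=1$.

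For the second claim I would bound the two sides of $\hfun''\hfun < (\hfun')^2$ separately. Using the surface-area identity $\frac{(A_{\cS}^{m+2})^2}{A_{\cS}^{m}A_{\cS}^{m+4}} = \frac{m+3}{m+1}$, the inequality is equivalent to the clean statement $h_{t,m}(x)h_{t,m+4}(x) < e^t\,h_{t,m+2}(x)^2$, a discrete log-concavity-in-dimension (up to the factor $e^t$) that already exposes the threshold: keeping only the constant ($l=0$) terms gives the ratio $\frac{m+3}{m+1}$, so the inequality holds to leading order precisely when $t>\ln\frac{m+3}{m+1}$. To make this rigorous I would isolate the leading terms — the constant term of $\hfun$, the $l=1$ term $e^{-mt/2}(m+1)$ of $\hfun'$, and the $l=2$ term $e^{-(m+1)t}(m+1)(m+3)$ of $\hfun''$ — and control the remaining tails by the elementary bound $|C_l^\lambda(x)| \le C_l^\lambda(1) = \binom{2\lambda+l-1}{l}$, valid for $\lambda>0$ and $x\in[-1,1]$. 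Upper-bounding $\hfun''\hfun \le \hfun''(1)\hfun(1)$ and lower-bounding $(\hfun')^2$ by the square of its leading term minus the tail bound reduces the whole claim to a single scalar inequality in $t$ and $m$, whose solution is the threshold $\delta(m)$.

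The main obstacle is exactly this last step: summing the Gegenbauer tail series accurately enough to obtain the explicit and reasonably tight bound $\delta(m)$. Because the endpoint values $\binom{2\lambda+l-1}{l}$ grow polynomially in $l$ with a degree increasing in $m$, the geometric decay $e^{-l(l+m-1)t/2}$ must be summed against this polynomial weight, and the two regimes in the definition of $\delta(m)$ — the case $m=2,3$ versus $m\ge 4$ — should correspond to two different ways of dominating these tails (a sharper geometric-series estimate where the weights are mild, and a coarser uniform bound otherwise). Verifying that the chosen leading term of $\hfun'$ genuinely dominates its tail, so that the lower bound on $(\hfun')^2$ is the square of a positive quantity, and that the generous multiplicative margin built into $\delta(m)$ absorbs all cross terms, is the part demanding the most careful bookkeeping.
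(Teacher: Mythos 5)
Your two structural identities are correct, and they give a genuinely different route from the paper's. I verified the dimension-shift formula: differentiating Equation \eqref{sphereheat} term by term with $\frac{d}{dx}C_l^{(m-1)/2}=(m-1)C_{l-1}^{(m+1)/2}$ and reindexing, the exponents satisfy $(k+1)(k+m)=k(k+m+1)+m$, so indeed $\hfun'(x)=e^{-mt/2}\frac{(m+1)A_{\cS}^{m+2}}{A_{\cS}^{m}}\,h_{t,m+2}(x)$, and iterating gives your formula for $\hfun''$; the surface-area identity $(A_{\cS}^{m+2})^2/(A_{\cS}^{m}A_{\cS}^{m+4})=\frac{m+3}{m+1}$ also checks out via $\Gamma((m+3)/2)=\frac{m+1}{2}\Gamma((m+1)/2)$. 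The paper never makes this observation: it works directly with partial sums $\hfunl^{i,j}$ of the original series, dominating tails by ratio estimates on consecutive Gegenbauer coefficients. Your identity buys a lot: the first claim becomes immediate from strict positivity of heat kernels, and in fact it shows every derivative $\frac{d^n}{dx^n}\hfun$ is a positive multiple of $h_{t,m+2n}$, hence positive for all $t>0$ — strengthening Lemma \ref{positivedifferentials}, where the paper only obtains $t\geq 1$ for $n\geq 2$. It also reduces the second claim to the clean inequality $h_{t,m}(x)h_{t,m+4}(x)<e^{t}h_{t,m+2}(x)^2$, with the correct leading-order threshold $\ln\frac{m+3}{m+1}$.

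The gap is the final step, which you yourself flag: the tail bounds that produce an explicit threshold are never executed, and the lemma's entire quantitative content is that explicit threshold. Moreover, your claim that the resulting scalar inequality's solution ``is'' $\delta(m)$ is unjustified and almost certainly false as stated — your bounding constants differ from the paper's, so you will obtain some other threshold $\delta'(m)$; what you actually need is $\delta'(m)\leq\delta(m)$ together with monotonicity of your bounds in $t$, so that negativity holds for \emph{all} $t>\delta(m)$. This should succeed, since the margin is generous (for $m=2$ the leading-order requirement is $t>\ln\frac{5}{3}\approx 0.51$ while $\delta(2)=\frac{2}{3}\ln 23\approx 2.09$, and at that $t$ the tails of $h_{t,2}$, $h_{t,4}$, $h_{t,6}$ at $\pm 1$ are already small), but it is precisely the bookkeeping on which the paper spends its whole proof, so as written you have a sound strategy rather than a proof. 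A smaller inaccuracy: the two regimes in $\delta(m)$ do not arise from two different ways of summing tails; in the paper $\delta(m)$ is the maximum of a leading-term threshold $\ln\frac{32(m+3)}{9(m+1)}$ and a tail-domination threshold $\frac{2}{m+1}\ln\left((m+1)+4(m+3)\right)$, and the case split merely records which of the two is larger.
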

\begin{lemma}\label{positivedifferentials}
	For $m\geq 2$ and $n=0,1$, the map $\frac{d^n}{dx^n}\hfun(x)$ is positive for all $t>0$. For $n\geq 2$ the map is positive for $t\geq 1$. 
\end{lemma}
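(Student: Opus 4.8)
The cases $n=0$ and $n=1$ require no new work and I would dispose of them first. For $n=0$, positivity of $\hfun(x)=p(x,y,t)$ is exactly property~1 of the minimal heat kernel. For $n=1$, the proof of Lemma~\ref{loghdiff} already records that $\frac{d}{dx}\lfun(x)=\hfun'(x)/\hfun(x)>0$ for all $t>0$ (a consequence of Remark~\ref{heatdecreasing}), and since $\hfun(x)>0$ this forces $\hfun'(x)>0$. The substance of the lemma is therefore the claim for $n\ge 2$, and I would prove it by the same ``leading term dominates the tail'' strategy used in Lemma~\ref{loghdiff}.

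Set $\alpha=(m-1)/2$. Differentiating the defining series for $\hfun$ term by term — legitimate because the factors $e^{-l(l+m-1)t/2}$ decay faster than any polynomial in $l$, so every differentiated series converges uniformly on $[-1,1]$ — and using the Gegenbauer identity $\frac{d^n}{dx^n}C_l^{\alpha}(x)=2^n(\alpha)_n\,C_{l-n}^{\alpha+n}(x)$, with $(\alpha)_n$ the rising factorial, I obtain
\[
\frac{d^n}{dx^n}\hfun(x)=c_{m,n}\Bigl[a_n+\sum_{l=n+1}^{\infty}a_l\,C_{l-n}^{\alpha+n}(x)\Bigr],
\qquad a_l:=e^{-l(l+m-1)t/2}(2l+m-1),
\]
where $c_{m,n}>0$ collects the positive constants and I have used $C_0^{\alpha+n}(x)=1$ to split off the $l=n$ term. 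Thus it suffices to show the tail is dominated in absolute value by the positive leading term $a_n$.

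To estimate the tail I would use the bound $|C_k^{\beta}(x)|\le C_k^{\beta}(1)$ for $\beta=\alpha+n>0$ already invoked in Lemma~\ref{loghdiff}, so that $\bigl|\sum_{l\ge n+1}a_l C_{l-n}^{\alpha+n}(x)\bigr|\le\sum_{l\ge n+1}b_l$ with $b_l:=a_l C_{l-n}^{\alpha+n}(1)$ and $b_n=a_n$. Combining $C_{k+1}^{\beta}(1)/C_k^{\beta}(1)=(k+2\beta)/(k+1)$ with $a_{l+1}/a_l=e^{-(2l+m)t/2}\frac{2l+m+1}{2l+m-1}$, the consecutive ratio is
\[
r_l:=\frac{b_{l+1}}{b_l}=e^{-(2l+m)t/2}\,\frac{2l+m+1}{2l+m-1}\cdot\frac{l+n+m-1}{l-n+1}.
\]
Both the exponential factor and the rational factor are decreasing in $l$ for $l\ge n$, so $r_l$ is maximized at $l=n$, where it collapses to $r_n=(2n+m+1)e^{-(2n+m)t/2}$. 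A geometric series bound then gives $\sum_{l\ge n+1}b_l\le b_n\,r_n/(1-r_n)$, which is strictly less than $b_n=a_n$ precisely when $r_n<\tfrac12$, and that in turn makes the bracket above strictly positive.

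The crux — and the only place the hypothesis $t\ge 1$ enters — is the numerical inequality $r_n<\tfrac12$. For $t\ge1$ one has $r_n\le(2n+m+1)e^{-(2n+m)/2}$, so writing $s=2n+m\ge 6$ (as $n\ge2,\ m\ge2$) it reduces to $(s+1)e^{-s/2}<\tfrac12$; this holds at $s=6$, where $7e^{-3}\approx 0.35$, and the left side is decreasing for $s>1$, so it holds for all admissible $s$. I expect the main obstacle to be packaging these elementary estimates cleanly: rigorously verifying the monotonicity of $r_l$ in $l$ (hence that the first index is the worst case) and confirming the geometric bound $r_n<\tfrac12$ uniformly over $m\ge2,\ n\ge2$ at $t=1$. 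Everything else is a direct transcription of the domination argument already carried out for $s_{t,n}$ in Lemma~\ref{loghdiff}.
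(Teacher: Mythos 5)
Your proof is correct and follows essentially the same route as the paper's: differentiate the Gegenbauer series term by term, split off the positive leading term (using $C_0^{\alpha+n}(x)=1$), bound the tail via $|C_k^{\beta}(x)|\le C_k^{\beta}(1)$, and dominate it by a geometric series with ratio below $\tfrac12$, with the hypothesis $t\ge 1$ entering only through an elementary numerical inequality at the smallest admissible index. Your consecutive-ratio bookkeeping ($r_l$ decreasing in $l$, so $r_n<\tfrac12$ suffices) is a slightly tidier packaging of the paper's direct requirement that the head exceed $2^{l}$ times each tail term, but it is the same argument.
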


In the following example we will calculate the diffusion means of the distribution in Figure \ref{2poledis}, which consists of mass in two opposite poles on $\cS^m$. The Fr\'echet mean is unique only if the south pole mass $\alpha$ is zero. When the mass is non-zero, the Fr\'echet mean set is a ring around the pole with radius depending on $\alpha$. In fact, a point $\mu$ on a compact manifold $\cM$ cannot be a Fr\'echet mean of a distribution with positive mass on $\mathcal{C}(\mu)$ \citet{le_measure_2014}. The following example proves that this is not the case for diffusion means, and it shows the influence of the time parameter $t$ on the diffusion mean set.
\begin{example}\label{ex:two_points1} Fix $m\geq 2$ and $t>\delta(m)$, and
	consider the random variable $X$ on $\cS^{m}$ distributed according to the two pole distribution $P(X\in -\mu) = \alpha$ and $P(X\in \mu) = 1-\alpha$ displayed in Figure \ref{2poledis}. The log-likelihood function of $X$ at $y_\delta$ can be expressed as a map of $\delta\in [0,\pi]$,
	\begin{align*}
	L_{t,m}(y_\delta) &= -(1-\alpha) \lfun(\cos \delta)-\alpha \lfun(-\cos \delta) =: \widehat{L}_{t,m}(\delta) 
	\end{align*}
	since $\langle y_\delta, \mu\rangle_{\R^m} = \cos\delta$. Figure \ref{2poleLikelihood} provides a plot of the graph for $t=3$ and various values of $\alpha$. 
	The first derivative with respect to $\delta$ is
	\begin{align}\label{Gder}
	\widehat{L}'_{t,m}(\delta) &= \sin\delta\left( (1-\alpha)\lfun'(\cos\delta) -\alpha \lfun'(-\cos\delta) \right)
	\end{align}
	Clearly $\widehat{L}'_{t,m}(0)=\widetilde{L}_{t,m}'\textbf{}(\pi) = 0$. 
	For $t>\delta(m)$ as defined in Equation \eqref{boundont}, it follows by Lemma \ref{loghdiff} that $\widehat{L}_{t,m}'(\delta)$ is strictly positive on $(0,\pi)$ whenever 
	\begin{equation*}
	\alpha \leq \Lambda_m(t):=\frac{\hfun(-1)\hfun'(1)}{\hfun'(-1)\hfun(1)+\hfun(-1)\hfun'(1)}
	\end{equation*}
	Thus, there is a unique minimum of $L_{t,m}$ at $\mu$ for $\alpha \leq \Lambda_m(t)$, which means that $\mu$ is the unique diffusion $t$-mean when the weight is sufficiently small. The functions $\Lambda_m$ can be seen in Figure \ref{2polesmeary} for $m=2,3,4,5$. Furthermore, we have $\lim_{t \to \infty}\Lambda_m(t) = \frac{1}{2}$ for all $m\geq 2$, which implies that for any $ \alpha < \frac12$ there exists a $t_\alpha>0$ such that $\mu$ is the unique diffusion $t_\alpha$-mean of the distribution with this particular $\alpha$. This means that we can pick the parameter $t$ to allow for almost arbitrary mass on the opposite pole of the mean. Finally, let us consider what happens for $\alpha = \frac12$, where $\widehat{L}_{t,m}$ simplifies to 
	\begin{equation*}
	\widehat{L}_{t,m}'(\theta) = \frac{1}{2}\sin\theta \left(\lfun'(\cos\theta)- \lfun'(-\cos\theta) \right).
	\end{equation*}
	Clearly $\widehat{L}_{t,m}'(0)=\widehat{L}_{t,m}'(\pi/2)=\widehat{L}_{t,m}'(\pi)=0$. Still assuming $t>\delta(m)$, it follows directly from Lemma \ref{loghdiff} that $\lfun'(\cos\theta)- \lfun'(-\cos\theta)$ is negative for $\theta\in(0,\pi/2)$ and positive for $\theta\in(\pi/2,0)$. 
	This implies that $\delta=\pi/2$ is the only minimum of $\widetilde{L}_{t,m}$ and that the diffusion $t$-mean set for $t>\delta(m)$ is the set of all of the points on the equator between $\mu$ and $-\mu$. 
	\begin{figure}[ht]
		\centering
		\includegraphics[height = 5cm,clip]{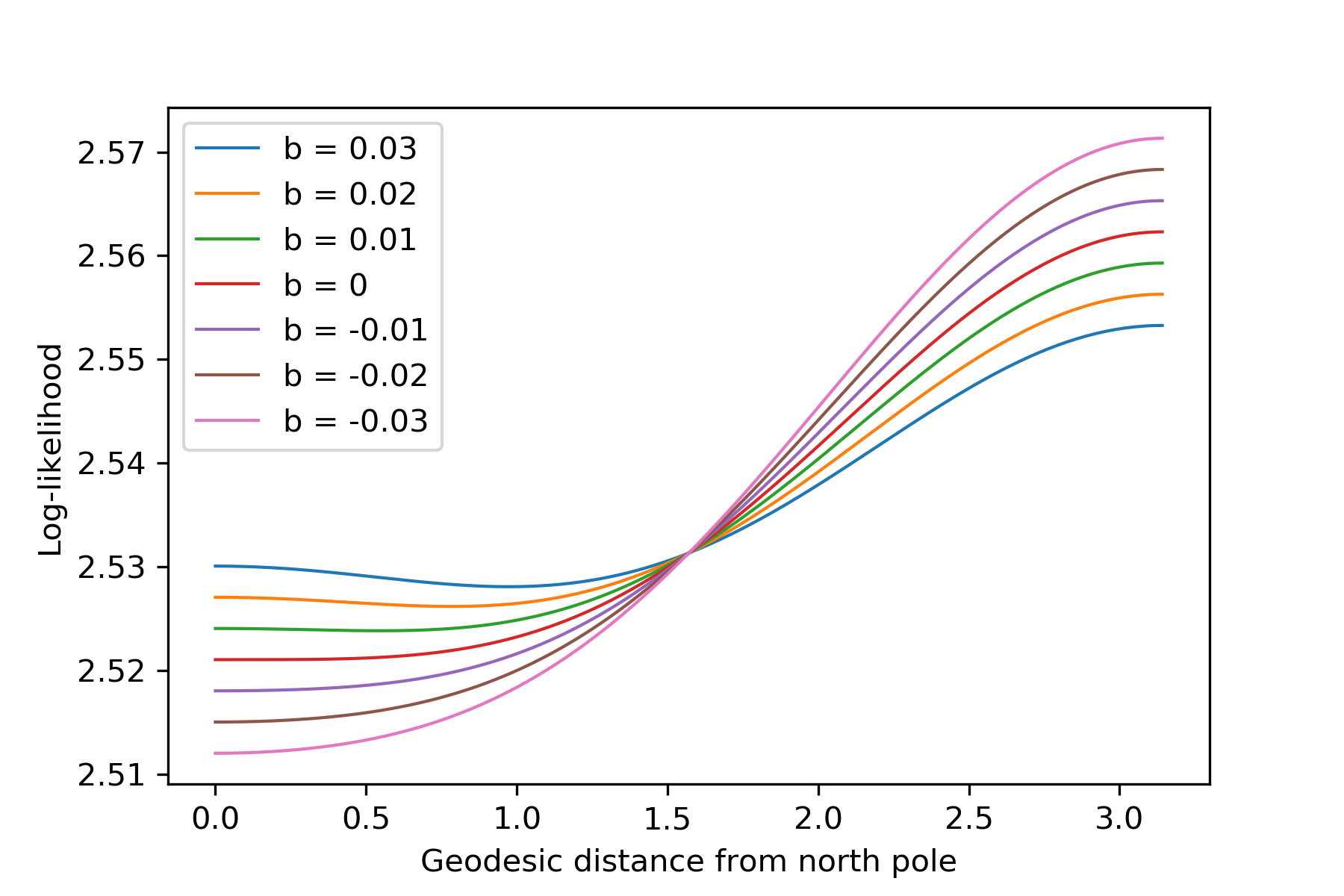}
		\caption{The log-likelihood function for the two pole distribution on $\cS^2$ with $t=3$ and weight $\alpha = \Lambda_2(t)+b$ on the south pole for various $b$.}
		\label{2poleLikelihood}
	\end{figure}
\end{example}
\subsection {Diffusion means on Lie groups and graphs}\label{subsec:diff_means_on_lie_and_graphs}
The assumption that the data space is a well-behaved smooth manifold is convenient for the existence of a heat kernel, however the assumption is not necessary for the construction. Diffusion means do not dependent on the smooth structure of a manifold or even the existence of a metric. The definition can be extended to random variables on spaces where it is possible to take identically distributed and independent steps. In this section we take a look at how the diffusion $t$-means can be defined for random variables on more general spaces.
 
\begin{enumerate}
	\item Defining mean sets on non-Riemannian affine connection manifolds is of great interest, due to the direct application in fields such as computational anatomy. There exists a symmetric bi-invariant connection on all Lie groups, which allows for an affine connection structure of the Lie group which may not be metric, meaning that the Riemannian barycenters are not defined. Due to the affine connection structure, it is still possible to define a mean value, for example through exponential barycenters \citet{pennec_barycentric_2015,pennec_barycentric_2018}. As diffusion means only depend on the likelihood resulting from a Brownian motion, this structure also allows for random walks to exist, therefore also for the possibility of defining diffusion means. A specific example of such a non-compact Lie group is the special Euclidean group $SE(n)$ (rigid transformations in $\mathbb R^n$) of which there exists no bi-invariant Riemannian metric \citet{pennec_exponential_2013}. In contrast, the bi-invariant connection defines parallel transport and horizontal vector fields on the frame bundle $FSE(n)$ of the group. A Brownian motion can be defined by stochastic development using the horizontal fields, and a choice of measure on the group implies a smooth solution to the corresponding heat equation on a sub-bundle of $FSE(n)$. This in turn defines a version of heat flow on $SE(n)$ and thereby a diffusion mean.
	\item Motivated by its possible application in network theory, the definition of diffusion means can also be extend to graphs. Let $G=(V,E)$ be a graph of $n = |V|$ vertices and $m = |E|$ edges and let $d_i$ denote the number of edges connected to vertex $v_i$ and $d_{ij}$ denote the number of edges between $v_i$ and $v_j$. Define $P$ to be the matrix with entries $P_{ij} = \frac{d_{ij}}{d_{i}}$. This is exactly the transition density of a random walk on $G$ for $t=1$. Allowing for an arbitrary number of steps $t\in \N$, the transition density is $P^t$. Thus, starting in $v_i$, the probability of hitting $v_j$ in time $t$ with a random walk is $(P^t)_{ij}$. 
	
	The distribution of a stochastic variable $X$ on the graph $G$ is given by the probabilities $\Pr(X = v_j)$ for $j=1,...,n$. To define diffusion $t$-mean sets, we want to find the $i$ maximizing the likelihood function
    $
	L_t(i) = \sum_{j=1}^{n} \Pr(X=v_j)(P^t)_{ij} = e_i P^t p^X
    $
	where $p^X\in \R^n$ has entries $p^X_j = \Pr(X = v_j)$. Thus for each $t\in \N$, we define the diffusion $t$-means of $X$ to be $E_{t} = {\argmin}_{1\leq i\leq n} e_iP^tp^X.$ 
\end{enumerate}

\section{Asymptotic analysis}\label{sec:asymptotics}

In this section we define the \emph{diffusion estimator} and investigate its asymptotic behavior. For manifolds of bounded sectional curvature, we present sufficient conditions for strong consistency in the sense of Ziezold and in the sense of Bhattacharya and Patrangenaru as defined in Subsection \ref{GeomStat}. 
The conditions rely on the interpretation of diffusion means as generalized Fr\'echet means, and are no more restrictive than in the case of the estimator 
\begin{equation*}
M_N=\argmin_{y\in \cM} \frac{1}{N}\sum_{i=1}^{N}\dist(y,x_i)^2
\end{equation*}
of the Fr\'echet mean set. For compact manifolds these conditions always hold. Next, we present sufficient conditions for the estimator to satisfy the general CLT of \citet{eltzner_smeary_2018} and consider the consequence on two spherical distributions. 
\begin{definition}
	Let $X$ be a random variable on $\cM$. Let $p$ be the minimal heat kernel on $\cM$. For samples $X_1,..., X_n\overset{\text{i.i.d.}}{\sim} X$ we define the \emph{sample log-likelihood function} $L_{t,n}:\cM \to \R$, 
	\begin{equation*}
	L_{t,n}(y) = - \frac{1}{n} \ln\Big(\prod_{i=1}^{n}p(X_i,y,t) \Big) = -\frac{1}{n} \sum_{i=1}^n \ln p(X_i,y,t)
	\end{equation*}
	for every $n\in\N$ and the \emph{sample diffusion $t$-mean sets} of $X_1,...,X_n$ are the sets
	\begin{equation*}
	E_{t,n} = \argmin_{y\in \cM} L_{t,n}(y).
	\end{equation*}
\end{definition}
Due to continuity of $p$ in the second component, every $E_{t,n}$ is a closed set. 
Considering the collection $\{E_{t,n}\}$ as random closed sets in the sense of 
\citet{Molch05}, they give rise to an M-estimator of the diffusion $t$-mean set $E_t$, which we call the \emph{diffusion estimator}. The following lemmas present sufficient conditions for the diffusion estimator to be strongly consistent.
\begin{lemma}
	Let $X:\Omega\to \cM$ be a random variable on $\cM$ and fix $t>0$. The diffusion estimator $E_{t,n}$ of $E_t(X)$ is strongly consistent in the sense of Ziezold (ZC) if either of the two following conditions hold:
	\begin{enumerate}
		\item $X$ has compact support.
		\item The map $y\mapsto \ln(p(x,y,t))$ is uniformly continuous and $\E[-\ln p (x,X,t)]<\infty$ for all $x\in\cM$. 
	\end{enumerate}
\end{lemma}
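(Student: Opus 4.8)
The plan is to recognize $E_{t,n}$ as the empirical generalized Fr\'echet mean set for the nonnegative loss $\rho(x,y)^2 = -\ln\!\big(p(x,y,t)\maxt^{-1}\big)$ introduced in Remark \ref{RmkOnBounded}, so that minimizing $L_{t,n}$ over $\cM$ is the same as minimizing the empirical Fr\'echet function $\frac1n\sum_{i}\rho(X_i,y)^2$, and $E_t(X)$ is the population minimizer set. Since $\cM$ is a connected Riemannian manifold it is separable; fix a countable dense set $D\subset\cM$. The backbone is Ziezold's separable-space scheme: I will produce a single probability-one event $\Omega_0$ on which (i) $L_{t,n}(q)\to L_t(q)$ for every $q\in D$, (ii) the convergence $L_{t,n}\to L_t$ is uniform on compacta and $L_t$ is continuous, and then argue that on $\Omega_0$ every accumulation point of the $E_{t,n}$ is a global minimizer of $L_t$.

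Step (i) is immediate from the strong law of large numbers applied pointwise: for each $q\in D$, $L_{t,n}(q)\to\E[-\ln p(X,q,t)]=L_t(q)$ almost surely, finiteness being guaranteed because $\E[-\ln p(X,q,t)]=\E[-\ln p(q,X,t)]<\infty$ by symmetry of $p$ (under Condition 2 this is assumed; under Condition 1 it holds since $\rho^2$ is continuous and $X$ is supported on a compact set); intersecting the countably many full-measure events gives $\Omega_0$. The crux is step (ii), upgrading this pointwise law to a locally uniform one, and this is exactly where the two hypotheses enter. Under Condition 1 the data lie in a fixed compact $K$, the map $(x,y)\mapsto\rho(x,y)^2$ is uniformly continuous on $K\times B$ for every closed ball $B$, so $\{x\mapsto\rho(x,y)^2:y\in B\}$ is a Glivenko--Cantelli class and $\sup_{y\in B}|L_{t,n}(y)-L_t(y)|\to 0$, while $L_t$ is continuous by dominated convergence. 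Under Condition 2 the uniform continuity of $y\mapsto\ln p(x,y,t)$ furnishes a modulus of continuity under which the family $\{L_{t,n}\}_n$ is equicontinuous; equicontinuity together with pointwise convergence on the dense set $D$ yields, by an Arzel\`a--Ascoli-type argument, uniform convergence on compacta and continuity of the limit $L_t$.

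With (ii) in hand, fix $\omega\in\Omega_0$ and let $y\in\bigcap_{n=1}^\infty\overline{\bigcup_{k=n}^\infty E_{t,k}(\omega)}$, so that $y_{n_l}\to y$ for some $y_{n_l}\in E_{t,n_l}(\omega)$. The minimizing property $L_{t,n_l}(y_{n_l})\le L_{t,n_l}(q)$ for all $q$ gives, on letting $l\to\infty$ and using step (i), $\limsup_l L_{t,n_l}(y_{n_l})\le L_t(q)$ for every $q\in D$, hence $\limsup_l L_{t,n_l}(y_{n_l})\le\inf_\cM L_t$ by continuity of $L_t$ and density of $D$. On the other hand, locally uniform convergence together with $y_{n_l}\to y$ gives $L_{t,n_l}(y_{n_l})\to L_t(y)$. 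Comparing the two yields $L_t(y)\le\inf_\cM L_t$, i.e. $y\in E_t(X)$, which is precisely (ZC).

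The main obstacle is the uniform law in step (ii): because the accumulation point $y$ is itself random one cannot invoke the SLLN at $y$ directly, and one must route through the fixed countable set $D$ while controlling $L_{t,n}$ near $y$ uniformly in $n$ and $\omega$; the two stated conditions are precisely the two standard mechanisms (a Glivenko--Cantelli bound for compactly supported data, or equicontinuity from uniform continuity) that supply this control. A minor separate point is that each $E_{t,n}$ is nonempty: the Gaussian-type decay of the heat kernel forces every term $-\ln p(X_i,y,t)\to\infty$ as $y$ leaves compacta, so each continuous $L_{t,n}$ is proper and attains its minimum.
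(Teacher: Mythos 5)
Your proposal is correct in substance but proceeds by a genuinely different route than the paper. The paper's entire proof is a citation: having identified the diffusion $t$-mean as a generalized Fr\'echet mean for the loss $\rho(x,y)^2 = -\ln\big(p(x,y,t)\maxt^{-1}\big)$ (Remark \ref{RmkOnBounded}), it invokes \cite[Theorem A.3]{huckemann_intrinsic_2011}, whose hypotheses are precisely the two conditions of the lemma. You perform the same identification but then reprove the Ziezold-type consistency theorem from first principles: strong law of large numbers on a countable dense set, an upgrade to locally uniform convergence (Glivenko--Cantelli on compacta under Condition 1, equicontinuity under Condition 2, with compactness of closed balls coming from geodesic completeness via Hopf--Rinow), and a sandwich argument showing every accumulation point of empirical minimizers is a population minimizer. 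This is essentially the proof of the theorem the paper cites; what your route buys is self-containedness and transparency about where each hypothesis enters, while the citation buys brevity and the guarantee that the hypotheses are exactly those under which the general result is known to hold.

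Two caveats. First, in your Condition 2 argument the equicontinuity of the family $\{L_{t,n}\}_n$ requires the modulus of uniform continuity of $y\mapsto \ln p(x,y,t)$ to be uniform in $x$: with an $x$-dependent modulus $\omega_x$ one only gets $|L_{t,n}(y)-L_{t,n}(y')| \le \frac{1}{n}\sum_{i} \omega_{X_i}\big(d(y,y')\big)$, which need not be small uniformly in $n$ without additional integrability and monotonicity assumptions on the moduli. The lemma's phrasing is ambiguous on this point, but the reading your argument needs (uniform equicontinuity of $\{\,y\mapsto\ln p(x,y,t)\,\}_{x\in\cM}$) is also the one under which the cited theorem applies, so you should state it explicitly rather than let ``furnishes a modulus'' carry the weight. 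Second, your closing remark on nonemptiness of $E_{t,n}$ is both unnecessary and under-justified: (ZC) is an inclusion and holds vacuously when $E_{t,n}=\emptyset$ (nonemptiness is only demanded for (BPC)), and the claimed Gaussian-type decay of the heat kernel at spatial infinity on a general geodesically and stochastically complete manifold with bounded curvature is itself a nontrivial estimate of Cheng--Li--Yau type, not something to assert in passing.
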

\begin{proof}
	This follows immediately from \citet[Theorem A.3]{huckemann_intrinsic_2011} by Remark \ref{RmkOnBounded}. 
\end{proof}
\begin{lemma}\label{BPC}
	Let $X:\Omega\to \cM$ be a random variable on $\cM$ and fix $t>0$. The diffusion estimator $E_{t,n}$ is strongly consistent in the sense of Bhattacharya and Patrangenaru (BPC) if $E_t\neq \emptyset$ and the following conditions hold:
	\begin{enumerate}
		\item $E_{t,n}$ satisfies (ZC)
		\item Every closed bounded subset of $\overline{\cup_{n=1}^\infty E_{t,n}}$ is compact (The Heine-Borel property)
		\item There exist $y_0\in \cM$ and $C>0$ such that $P(-\ln p(X,y_0,t)<C)>0$ and for every sequence $y_n\in \cM$ with $d(y_0,y_n)\to \infty$ there is a sequence $R_{t,n}\to\infty$ with $-\ln p(x,y_n,t)>R_{t,n}$ for all $x\in \cM$ with $-\ln p(x,y_0,t)<C$. 
	\end{enumerate}
\end{lemma}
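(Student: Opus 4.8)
The plan is to reduce the statement to the abstract theory of generalized Fr\'echet means and then verify its hypotheses, exactly as in the proof of the preceding (ZC) lemma. By Remark \ref{RmkOnBounded}, since $\cM$ has bounded sectional curvature the heat kernel $p(\cdot,\cdot,t)$ admits a bound $\maxt>0$, and the diffusion $t$-mean is the generalized Fr\'echet mean for the nonnegative loss $\rho(x,y)^2 = -\ln(p(x,y,t)\maxt^{-1})$. Written in these terms, $L_{t,n}$ differs from the empirical generalized Fr\'echet function $\frac1n\sum_i \rho(X_i,y)^2$ only by the additive constant $\ln\maxt$, so the sample diffusion sets $E_{t,n}$ coincide with the empirical generalized Fr\'echet mean sets. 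I would then invoke the companion of \cite[Theorem A.3]{huckemann_intrinsic_2011} giving Bhattacharya--Patrangenaru consistency: it requires (ZC), the Heine--Borel property, and a coercivity (properness) condition on the loss. Conditions (1) and (2) of the lemma are precisely the first two hypotheses, so the entire content of the proof is to show that condition (3) supplies the coercivity condition.

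First I would record that nonnegativity of $\rho^2$ gives $-\ln p(x,y,t) \ge -\ln\maxt$ for all $x,y$, i.e.\ the integrand is uniformly bounded below; this is what keeps the "bad" samples from dragging the empirical loss down. Next, set $A = \{x\in\cM : -\ln p(x,y_0,t) < C\}$, which by condition (3) satisfies $\bP(X\in A) > 0$. By the strong law of large numbers the empirical frequency $\frac1n\,\#\{i\le n : X_i\in A\}$ converges almost surely to $\bP(X\in A)$, so for almost every $\omega$ there is $n_0$ with at least $\tfrac12\bP(X\in A)\,n$ of the first $n$ samples lying in $A$ for all $n\ge n_0$. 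Simultaneously the SLLN gives $L_{t,n}(y_0)\to L_t(y_0)$, which is finite because $E_t\neq\emptyset$ forces the Fr\'echet function to be finite at its minimizers and hence, by the interpretation above, at $y_0$.

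The core coercivity argument is then a contradiction. Suppose that along some subsequence there are sample minimizers $\mu_n\in E_{t,n}$ with $\dist(y_0,\mu_n)\to\infty$. Condition (3) yields a sequence $R_{t,n}\to\infty$ with $-\ln p(x,\mu_n,t) > R_{t,n}$ for every $x\in A$. Splitting the empirical sum $L_{t,n}(\mu_n) = \frac1n\sum_{i=1}^n(-\ln p(X_i,\mu_n,t))$ into the samples lying in $A$ and the rest, bounding the former terms below by $R_{t,n}$ and all terms below by $-\ln\maxt$, the frequency estimate gives $L_{t,n}(\mu_n) \ge \tfrac12\bP(X\in A)\,R_{t,n} - |\ln\maxt| \to \infty$. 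Since $L_{t,n}(y_0)$ stays bounded, this contradicts $L_{t,n}(\mu_n)\le L_{t,n}(y_0)$ for large $n$. Hence the sample minimizers eventually lie in a bounded set, which by the Heine--Borel property (condition 2) is precompact; combined with the Ziezold inclusion from (ZC), the outer limit of $\{E_{t,n}\}$ lies in this compact closure and the convergence upgrades to the one-sided Hausdorff (BPC) sense.

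The main obstacle I anticipate is the coercivity step: one must ensure both that a uniformly positive fraction of samples lies in $A$ (handled by the SLLN applied to the indicator of $A$) and that the remaining samples cannot cancel the divergence coming from $A$ (handled by the uniform lower bound $-\ln\maxt$ afforded by the normalization in Remark \ref{RmkOnBounded}). Verifying that condition (3) is exactly the abstract properness hypothesis of the generalized-Fr\'echet BPC theorem, rather than merely a sufficient surrogate, is where care is needed, but the argument above shows the two coincide.
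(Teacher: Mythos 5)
Your proposal follows the paper's route exactly: the paper's entire proof is the one-line reduction via Remark \ref{RmkOnBounded} to the generalized Fr\'echet mean setting followed by a citation of \cite[Theorem A.4]{huckemann_intrinsic_2011}, whose hypotheses are precisely conditions (1)--(3) of the lemma. Your further unpacking of the coercivity argument is essentially a re-derivation of that cited theorem's internal proof and is sound, except for one slip: finiteness of $L_t(y_0)$ does not follow from $E_t\neq\emptyset$ (finiteness at a minimizer says nothing about the particular $y_0$ of condition (3)), so the comparison in the contradiction step should be made against $L_{t,n}(\mu)$ for a population minimizer $\mu\in E_t$ rather than against $L_{t,n}(y_0)$.
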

\begin{proof}
	This follows directly from \citet[Theorem A.4]{huckemann_intrinsic_2011} by Remark \ref{RmkOnBounded}. 
\end{proof}

As for general results on manifolds, the situation simplifies when the manifold is compact. This is also the case for the previous two lemmas. In fact, the conditions are all satisfied in this case and can be dropped completely. 
\begin{cor}
	Fix $t>0$ and let $X$ be a random variable on $\cM$ and assume that $\cM$ is compact. Then, the diffusion estimator $E_{t,n}$ satisfies (ZC) and (BPC). 
\end{cor}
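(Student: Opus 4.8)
The plan is to discharge the corollary by checking that compactness of $\cM$ makes every hypothesis of the two preceding lemmas either automatic or vacuous. Recall from the heat kernel properties listed in Section 2 that $p$ is strictly positive and lies in $\mathcal{C}^\infty(\cM\times\cM\times\R_+)$, so for each fixed $t>0$ the map $(x,y)\mapsto -\ln p(x,y,t)$ is continuous, and on the compact set $\cM\times\cM$ it is therefore bounded. I will use this boundedness repeatedly.

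First I would establish (ZC) by verifying the first condition of the Ziezold lemma, namely that $X$ has compact support. Since the support of $X$ is a closed subset of $\cM$ and $\cM$ is compact, it is compact; hence the hypothesis holds and that lemma applies directly. (Alternatively, its second condition holds as well, since $y\mapsto\ln p(x,y,t)$ is continuous on a compact manifold, hence uniformly continuous and bounded, so $\E[-\ln p(x,X,t)]<\infty$ for every $x\in\cM$.)

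Next I would verify the three conditions of Lemma \ref{BPC} together with the nonemptiness requirement $E_t\neq\emptyset$. For nonemptiness, note that $L_t(y)=\E[-\ln p(X,y,t)]$ is a continuous function of $y$ (by the uniform boundedness of $-\ln p$ on $\cM\times\cM$ and dominated convergence), so it attains its minimum on the compact set $\cM$; thus $E_t\neq\emptyset$. Condition (1) is exactly the (ZC) statement just proved. Condition (2), the Heine-Borel property, is immediate: $\overline{\cup_{n=1}^\infty E_{t,n}}$ is a closed subset of the compact space $\cM$, hence compact, and every closed subset of a compact set is compact. For condition (3), I would pick any $y_0\in\cM$ and take $C$ larger than the finite supremum of $-\ln p(x,y_0,t)$ over $x\in\cM$; then $P(-\ln p(X,y_0,t)<C)=1>0$. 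The second half of condition (3) quantifies over sequences $y_n\in\cM$ with $d(y_0,y_n)\to\infty$, but a compact $\cM$ has finite diameter, so no such sequence exists and this requirement is vacuously satisfied.

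There is essentially no obstacle here beyond bookkeeping: the substantive content is that compactness simultaneously guarantees compact support, continuity and boundedness of the integrand, the Heine-Borel property, and finite diameter, which respectively make the hypotheses of the two lemmas hold or become vacuous. The only step requiring a short argument rather than a one-line observation is the continuity of $L_t$ used to secure $E_t\neq\emptyset$, which follows from the uniform boundedness of $-\ln p(\cdot,\cdot,t)$ on the compact product $\cM\times\cM$.
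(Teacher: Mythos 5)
Your proof is correct and follows essentially the same route as the paper's: both discharge the corollary by checking that compactness of $\cM$ makes every hypothesis of the two consistency lemmas hold (or, in the case of the coercivity condition, vacuous). The only cosmetic difference is that the paper verifies (ZC) via the uniform-continuity condition while you lead with the compact-support condition, but you note the other route as well, and your filled-in details (continuity of $L_t$ for $E_t\neq\emptyset$, Heine--Borel, vacuity of the coercivity requirement) are exactly what the paper's terse ``follow directly from $\cM$ being compact'' leaves implicit.
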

\begin{proof}
	Fix $t>0$. The estimator satisfies (ZC) because $\rho_t(x,y) = -\ln(p(x,y,t))\maxt$ is smooth on a compact domain, hence also uniformly continuous and bounded. For (BPC), the three assumptions 1.-3. of Lemma \ref{BPC} follow directly from $\cM$ being compact. Also, this ensures $E_t \neq \emptyset$. 
\end{proof} 
We will now move on from the topic of strong consistency to a generalized CLT for the diffusion estimator. Before we state the CLT for the diffusion estimator, we introduce some notation. Fix $t>0$ and let $X$ be a random variable with unique diffusion $t$-mean $\mu_{t}\in \cM$. For a measurable selection $\mu_{t,n}\in E_{t,n}$, define the sequence $x_{t,n}\in T_{\mu_t}\cM$ by
\begin{equation}\label{xtn}
x_{t,n} = \begin{cases}
\log_{\mu_t}(\mu_{t,n}) &\text{if } \mu_{t,n}\notin \mathcal{C}(\mu_t) \\
0 &\text{else}
\end{cases}.
\end{equation}
Define also $\widetilde{L}_t:\cD(\mu_t)\to \R$ by $\widetilde{L}_t(x)= L_t(\exp_{\mu_t}(x))$, which can be extended continuously to all of $T_{\mu_t}\cM$. Also, for each $y\in\cM$ we define the map $q_t(x,y)=-\ln p(\exp_{\mu_t}(x),y,t)$. 
\begin{theorem}[\textbf{General CLT}]
	Fix $t>0$ and let $X$ be a random variable on the manifold $\cM$ of dimension $m$. Assume that $\mu_t$ is the unique diffusion $t$-mean and that for every measurable selection $\mu_{t,n}$ of $E_{t,n}$ we have $\mu_{t,n}\overset{\mathbb{P}}{\to}\mu_t$, and define $x_{t,n}$ as in Equation \eqref{xtn}. Furthermore, assume that there exists $2\leq r \in \R$, a rotation matrix $R\in SO(m)$ and non-zero vectors $T_1,...,T_m$ such that 
	\begin{equation}\label{Taylor}
	\widetilde{L}(x) = \widetilde{L}_t(0) + \sum_{i=1}^m T_i |(Rx)_i|^r + o(||x||^r).
	\end{equation}
	Then for every measurable selection $\mu_{t,n}\in E_{t,n}$ it holds that
	\begin{equation}\label{dis}
	\sqrt{n}(V_1,...,V_n)^T \overset{\cD}{\to} \cN(0,\frac{1}{r^2}T^{-1}\emph{Cov}[\emph{grad}_xq_t(x,X)|_{x=0}]T^{-1})
	\end{equation}
	where $V_i = (Rx_n)_i|(Rx_n)_i|^{r-2}$ and $T=diag(T_1,...,T_m)$.
	\label{thm:clt}
\end{theorem}
\begin{proof}
	The result follows immediately from \citet[Theorem 11]{eltzner_smeary_2018} if all assumptions are met. Given the two assumptions above, the only remaining non-trivial condition is the almost surely locally Lipschitz and differentiability at the mean. Both properties hold since $p$ is smooth and thereby locally Lipschitz. 
\end{proof}
This Theorem is used as the basis for the definition of \emph{smeariness}. Here, we introduce the term for compact manifolds. 
\begin{definition}\label{SmearyDef}
	Fix $t>0$ and let $X$ be a random variable on the compact manifold $\cM$. If $E_t = \{\mu_t\}$ and the Taylor expansion of Equation \eqref{Taylor} exists and thus the limiting distribution as in Equation \eqref{dis} holds for every measurable selection $\mu_{t,n}$, then the diffusion estimator of $X$ is called \emph{$k$-th order smeary} or simply \emph{$k$-smeary} with $k=r-2$.
\end{definition}

As mentioned, several cases of smeariness have been discovered for the Fr\'echet mean. In the following examples we investigate the existence of smeariness for the diffusion means and also the influence of the time parameter $t$. We consider the order of smeariness as introduced in Definition \ref{SmearyDef} on the two pole distribution in Example \ref{ex:two_points1} and on a bimodal Brownian normal distribution in Example \ref{ex:bimodal-brownian}. In Section \ref{scn:application}, we consider the convergence rate of the diffusion estimator on a real data set. Lastly, in
Appendix \ref{sec:appendix-example}, we examine a previously studied distribution \citet{eltzner_smeary_2018}. 

\begin{example} \label{ex:two_points2}
	Let $X: \Omega \to \cS^m$ have the two pole distribution from Example \ref{ex:two_points1} and fix $t>\delta(m)$. As proved earlier, the point $\mu$ is the unique diffusion $t$-mean whenever $\alpha \leq \Lambda(t)$. In order to determine the asymptotic rate of the diffusion estimator we calculate the second derivative of $\widetilde{L}_{t,m}$ at $\delta = 0$. The second derivative 
	\begin{align*}
	\widetilde{L}_{t,m}''(\delta) &= \cos\delta \left( -\alpha \ell_{t,m}'(-\cos\delta) +(1-\alpha)\ell_{t,m}'(\cos\delta) \right) \\
	&\quad  +\sin\delta^2 \left( -\alpha \ell_{t,m}''(-\cos\delta) -(1-\alpha)\ell_{t,m}''(\cos\delta) \right)
	\end{align*}
	vanishes at $\delta=0$ exactly for $\alpha = \Lambda(t)$ and the third derivative vanishes for all values of $\alpha$. Using a Taylor expansion of $\widetilde{L}$ to achieve the expansion in Equation \eqref{Taylor}, this means that the regular CLT rate of convergence applies for $\alpha \leq \Lambda(t)$, whereas the estimator is at least $2$-smeary for $\alpha_t = \Lambda(t)$, if $\widetilde{L}_{t,m}$ still takes its global minimum at $\delta = 0$. According to Lemma \ref{loghdiff}, this is satisfied, if $t>\delta(m)$.
\end{example}

\begin{example}
	\label{ex:bimodal-brownian}
	In this example we consider the diffusion means of a bimodal \BND \space arising from the Brownian motion on $\cS^2$ visualized in Figure \ref{BNDdistribution}. We sample from a \BND \space on $\cS^2$ with variance $\sigma = 0.3$ in both poles $\mu$ and $-\mu$, with total probability mass $0.8$ in the north pole $\mu$ and $0.2$ in the south pole $-\mu$. 
	The estimated variance $V$ for the estimators of the Fr\'echet mean and the diffusion $t$-mean for $t = 0.4,0.6,1,2,4$ is plotted in Figure \ref{BNDestimates}. This variance is scaled by sample size $n$ and the dotted line represents the convergence rate of a 2-smeary estimator and the black line the standard CLT rate of $\sqrt{n}$. The weight $\alpha=0.2$ was chosen such that the Fr\'echet estimator is smeary to exemplify the behavior of the diffusion estimator in such a case. Figure \ref{BNDestimates} clearly indicates that the diffusion estimators have lower convergence rates and that an increase in $t$ leads to a faster converging diffusion estimator. 
\end{example}

\subsection{Small $t$ asymptotics} \label{shorttime}

For complete Riemannian manifolds, the uniform convergence \citet[Theorem 5.2.1]{hsu_stochastic_2002} on compact subsets 
\begin{equation} \label{eq:uniform-conv}
\lim_{t \to 0 } -2t\ln p(x,y,t) = \dist(x,y)^2
\end{equation}
connects the logarithmic heat kernel and the squared distance for arbitrary points $x,y\in \cM$. 
For fixed $t>0$, the scalar $2t$ does not affect the minima, so the sets $E_t(X)$ and $\argmin_{y\in \cM} \E [-2t\ln p(X,y,t)]$ coincide. For $t=0$ we have the set of classical Fr\'echet means
\begin{equation*}
M = \argmin_{y\in \cM}\E[\dist(X,y)^2].
\end{equation*}
We set
\begin{equation*}
  \begin{array}{rcll}
     \rho_t(x,y) &:=& -2t \log p(x,y,t)&\mbox{ and }F_t(y) := \E[\rho_t(X,y)]\mbox{ for }t>0\mbox{, as well as }\\
     \rho_0(x,y) &:=& \dist(x,y)^2&\mbox{ and }F_0(y) := \E[\rho_0(X,y)]\,,
  \end{array}
\end{equation*}
and assume that $\cM$ allows for an exhaustion by countably many compact sets $K_1\subseteq K_2 \subseteq \ldots\subseteq \cM$, $\bigcup_{j\in \N} K_j = \cM$ such that for every $y\in \cM$ there is $T(y)>0$ such that the following local tightness condition holds
\begin{equation}\label{eq:tightness}
	\mbox{ for every } \eps >0\mbox{ there is } j(\eps) \in \N \mbox{ so for all }j \geq j(\eps), t\in [0,T(y)] : \, \E[\rho_t(X,y)\cdot 1_{\{X \not \in K_j\}}] < \eps \,.
\end{equation}

\begin{theorem}\label{thm:diff-mean-frechet-mean}
  Assume that $\cM$ is a complete Riemannian manifold that allows for an exhaustion by countably many compact sets satisfying the local tightness condition (\ref{eq:tightness}), this is the case if $\cM$ is compact, then 
	\begin{equation*}
    M\neq \emptyset\,.
  \end{equation*}
	Further, for every sequence $0< t_k \to 0$ ($k\to \infty$) the sets of diffusion means converge to the set of classical Fr\'echet means
  in the sense of Ziezold, i.e.
  \begin{equation*}
    \mathop{\bigcap}_{n=1}^\infty \overline{\mathop{\bigcup}_{k=n}^\infty E_{t_k}} \subseteq M\,.
  \end{equation*}
  If additionally
  \begin{enumerate}
   	\item every closed bounded subset of $\overline{\cup_{k=1}^\infty E_{t_k}}$ is compact (the Heine-Borel property),
      \item there exist $y_0\in \cM$ and $C>0$ such that $P(\rho_t(X,y_0)<C)>0$ and for every sequence $y_n\in \cM$ with $d(y_0,y_n)\to \infty$ there is a sequence $R_n\to\infty$ with $\rho_t(x,y_0)>R_n$ for all $x\in \cM$ with $\rho_t(x,y_0)<C$ (coercivity),
  \end{enumerate}
  then convergence takes place in the sense of Bhattacharya and Patrangenaru, i.e. 
  for every $\eps>0$ there is a number $n=n(\eps,\omega)>0$ such that 
  \begin{equation*}
  \bigcup_{k=n}^\infty E_{t_k}\subset B(M,\eps)
  \end{equation*}
  where $B(M,\eps)$ is the ball of radius $\eps$ around $M$. 
\end{theorem}

\begin{proof}
    A self contained proof is given in Appendix \ref{subsec:thm47}.
\end{proof}

\subsection{Estimating $t$}
In previous sections we have discussed the estimation of the diffusion means for fixed time-parameter $t$ and it has been exemplified that increasing $t$ in terms of diffusion mean set and asymptotic rate of the diffusion estimator. In this section, we will discuss the estimation of $t$ for fixed mean.
 
A way to select an appropriate $t$ is to apply the same maximum likelihood method that was used to define the diffusion means. We define the \emph{diffusion variance set} $\mathcal{T}(y)$ at a point $y\in \cM$ to be
\begin{equation*}
\mathcal{T}(y) = \argmin_{t>0} L_t(y).
\end{equation*}
Letting $\mu_s$ be a diffusion $s$-mean for $s>0$, the derivative $\frac{d}{dt}L_t(\mu_s)$ can be written in terms of the Laplace Beltrami operator $\Delta$ due to $p$ being a solution to the heat equation, i.e. $\frac{d}{dt}p=\frac12\Delta p$:
\begin{align}
\frac{d}{dt} \int_\cM-\ln p(x,\mu_s,t)d\bP^X(x) &= -\int_\cM \frac{\frac{d}{dt}p(x,\mu_s,t)}{p(x,\mu_s,t)} d\bP^X(x)
&= -\int_\cM \frac{\frac12\Delta p(x,\mu_s,t)}{p(x,\mu_s,t)} d\bP^X(x).
\label{eq:lnheateq}
\end{align}
See also e.g. \citet{hsu_stochastic_2002}.
This means that the local minima of $L_t(\mu_s)$ w.r.t. $t$ can be found using the gradient descent algorithm
\begin{equation*}
t_{n+1} = t_n +\frac{\beta}2\int_\cM \frac{\Delta p(x,\mu_s,t)}{p(x,\mu_s,t)} d\bP^X(x)
\end{equation*} 
with learning rate $\beta >0$. As an illustration, we revisit the two pole distribution from Example \ref{ex:two_points1}.
\begin{example}\label{ex:two_points3}
	Let $X$ have the two-pole distribution as in Example \ref{ex:two_points1}.
	The Laplace Beltrami operator on $\cS^2$ with respect to the coordinates of Equation \eqref{coordinates} is 
	\begin{equation}
	\Delta_{\cS^2} f(\theta,\phi) = \frac{1}{\sin(\phi+\pi)}\frac{d}{d\phi}\left(\sin(\phi+\pi)\frac{d}{d\phi}f \right) + \frac{1}{\sin(\theta+\pi/2)^2}\frac{d^2}{d\theta^2}f.
	\end{equation}
	Fixing $\mu_t = \mu = (0,1,0)$ and recalling that the heat kernel only depends on $\phi = -\pi/2 + \delta $, we get an expression for the Laplace Beltrami operator of the heat kernel
	\begin{align*}
	\Delta_{\cS^2}p(\mu,y_\delta,t) &= \frac{1}{\sin(\pi/2+\delta)}\frac{d}{d\delta}\left(\sin(\pi/ 2+\delta)\frac{d}{d\delta}h_{t,2}(\cos\delta)\right) \\ 
	&= \sin\delta^2h_{t,2}''(\cos\delta) - \frac{\cos\delta^2-\sin\delta^2}{\cos\delta}h'_{t,2}(\cos\delta).
	\end{align*}
	Thus, for a fixed $\alpha\in [0,\frac12]$ we have
	\begin{align*}
	\frac{d}{dt}L_t(\mu) &= -\alpha \frac{h'_{t,2}(-1)}{h_{t,2}(-1)} + (1-\alpha)\frac{h'_{t,2}(1)}{h_{t,2}(1)} = -\alpha \ell'_{t,2}(-1) + (1-\alpha) \ell'_{t,2}(1).
	\end{align*}
	We now consider $\alpha = \Lambda(s)$ for some $s>0$. We then recognize this expression from the derivative of the log-likelihood function in Equation \eqref{Gder} and conclude that $\frac{d}{dt}L_t(\mu)|_{t=s} = 0$, the value which leads to a smeary $\mu$. The derivative $\frac{d}{dt}L_t(\mu)|_{t = r}$ is positive for $r>s$ and negative for $r<s$, which indicates a local minimum such that the diffusion variance set at $\mu$ is $\mathcal{T}(\mu) = \{s\}$.
	
	Joint estimation of $\mu$ and $t$ can therefore only lead to smeariness for $t=s$ or non-uniqueness for $t<s$ of the diffusion mean. Joint estimation of parameters is treated numerically in Figure \ref{2poletestimate} below.
\end{example}

\subsection{Removing smeariness: Joint estimation of $\mu$ and $t$}
Analogous to the Gaussian MLE in Euclidean space, it is possible to estimate $\mu_t$ and $t$ jointly, to get
\begin{equation}
 (\mu_{n}, t_{n}) = \mathop{\argmin}_{t>0, \mu\in\cM} L_{t,n}(\mu).
 \label{eq:joint_est}
\end{equation}
Such an estimation was performed on a data set on $S^1$ by 
\citet{hansen_diffusion_2021}. It was found that the diffusion mean $\mu_{n}$ jointly estimated with $t_n$ exhibits a considerably lower magnitude of finite sample smeariness (introduced in Section \ref{scn:application}) than the Fr\'echet mean.

In fact, it turns out that non-smeariness of diffusion means with jointly estimated $\mu$ and $t$ is a very general phenomenon as shown in the following result. We state it for the population mean, but the result holds similarly for sample estimators minimizing \eqref{eq:joint_est}.
\begin{theorem} \label{theo:joint_estimation}
Assume $\mu,t$ minimizes the likelihood \eqref{negloglikelihood}. Then
\begin{equation*}
\Delta_{\mu}
\E[
-\ln p(X,\mu,t) 
]
=\E[
\left\|\nabla_x\ln p(x,\mu,t)_{x=X}\right\|^2
]
.
\end{equation*}
\end{theorem}
\begin{proof}
Since $p$ is a solution to the heat equation, we can continue \eqref{eq:lnheateq} to see that $-\ln p$ satisfies 
\begin{align*}
&\frac{d}{dt} \int_\cM-\ln p(x,\mu,t)d\bP^X(x) 
= -\int_\cM \frac{\frac12\Delta_{x} p(x,\mu,t)}{p(x,\mu,t)} d\bP^X(x)
\\
&\qquad 
= -\int_\cM 
\frac12\Delta_{x}\ln p(x,\mu,t) 
+
\frac12\left\|\nabla_x\ln p(x,\mu,t)_{x=X}\right\|^2
d\bP^X(x).
\end{align*}
Because of symmetry of the heat kernel, $\Delta_\mu\ln p(x,\mu,t)=\Delta_x\ln p(x,\mu,t)$.
At a critical point for $t$, the left-hand side vanishes and the result follows.
\end{proof}
As a consequence of this result, the expected value of the Hessian trace of the negative log-likelihood is non-zero if $\left\|\nabla_x\ln p(x,\mu,t)_{x=X}\right\|^2$ is non-zero 
with positive probability. This happens unless $X$ is only supported on the critical points of the log-likelihood, as in the two pole distribution from Examples \ref{ex:two_points1}, \ref{ex:two_points2}, \ref{ex:two_points3}. This result does not imply that all eigenvalues of the expected Hessian are strictly positive but it means that some are and the rest could be 0 because negative values cannot happen at a local minimum. Positive eigenvalues of the expected Hessian correspond to $r=2$ in the Taylor expansion in Theorem~\ref{thm:clt}. The phenomenon where some, but not all eigenvalues are zero, i.e smeariness occurs in those directions only, has been introduced by \citet{eltzner_geometrical_2020,TranEltznerHuckemann2021} as \emph{directional smeariness}. These consideration yield at once the following.

\begin{cor}
\label{cor:nonsmeariness}
Let $X_1,\ldots,X_n$ be an i.i.d. random sample on $M$. If for every $\mu$ and $t$ one has
$\sum_{j=1}^n \left\|\nabla_x\ln p(x,\mu,t)|_{x=X_j}\right\|^2>0$, then $\mu$ is at most directionally smeary.
\end{cor}

We here briefly discuss the generality of Corollary~\ref{cor:nonsmeariness} and possible ways to strengthen the result.
On real analytic manifolds, the heat kernel is real analytic 
so that $-\ln p(x,\mu,t)$ can only have isolated critical points. This happens for the two-pole example as mentioned above, but it is "rare" in practice. E.g. if the data distribution is non-singular, it is almost sure not the case. 

The Hessian trace being positive does not rule out directional smeariness. In order to strengthen this result, we need to consider the second order derivative terms in the Laplacian individually. One possibility is here to use particular a structure of $M$, e.g. if $M$ has a product Riemannian structure, the Hessian trace will generally be positive on each of these factors individually. Even more generally, with the anisotropic Brownian-like flows considered in \citet{sommer_modelling_2016}, the diffusion variance can be fitted by maximum likelihood in all directions individually, corresponding to estimation of a full covariance matrix. This provides additional information on the directional derivative terms in the Laplacian individually, and we expect to be able to strengthen Corollary~\ref{cor:nonsmeariness} in this setting in future work.

We perform a numerical study of joint parameter estimation for the two pole distribution discussed in Example \ref{ex:two_points3} and the Brownian normal distribution. The results are displayed in Figure \ref{fig:joint_estimation}.
	
\begin{figure}[ht]
	\begin{subfigure}[t]{.45\linewidth}
		\centering
		\includegraphics[width=\linewidth]{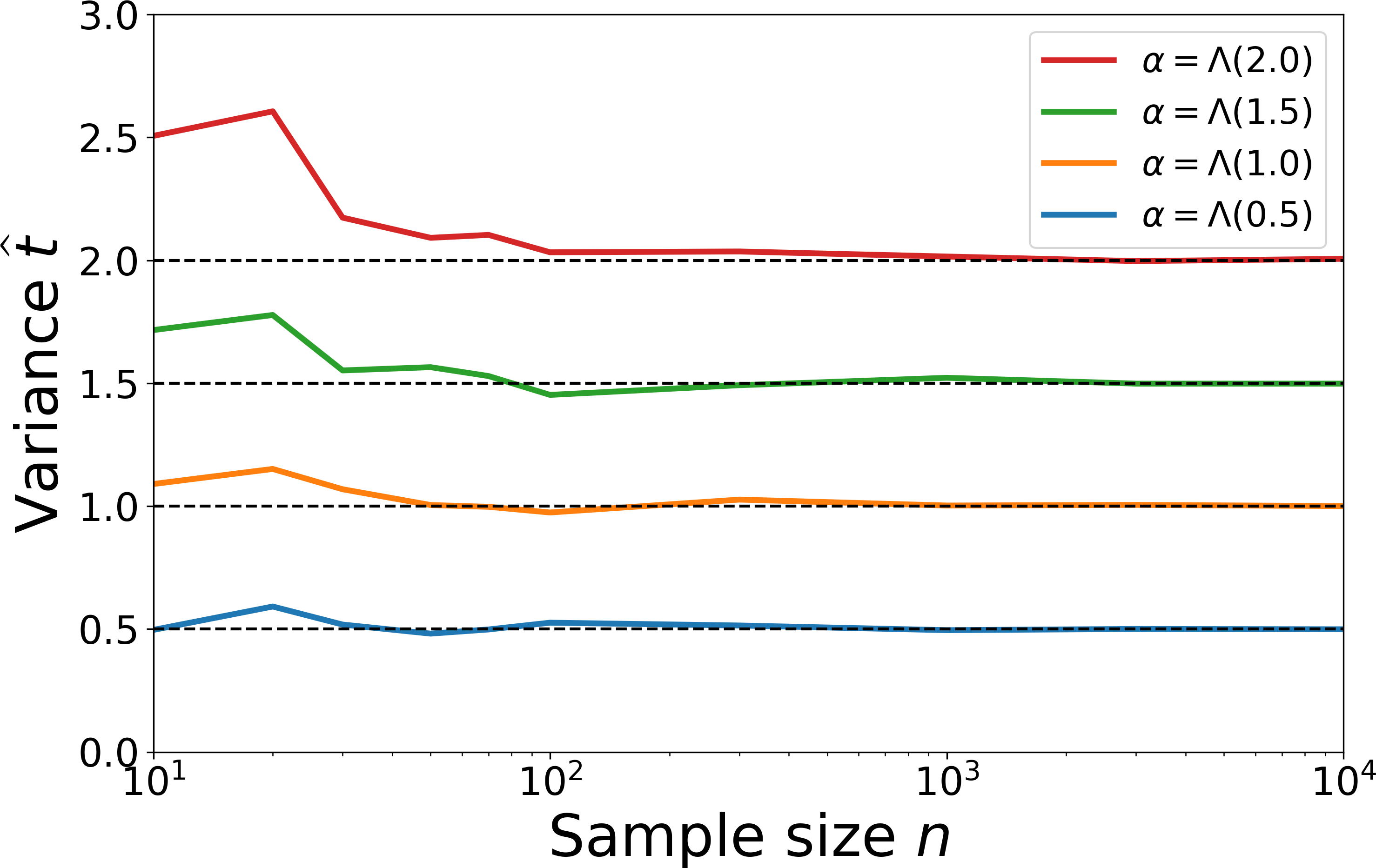}
		\caption{Estimates of $t$ for the two pole distribution where (blue) $\alpha = \Lambda(0.5)$, (orange) $\alpha = \Lambda(1.0)$, (green) $\alpha = \Lambda(1.5)$, and (red) $\alpha = \Lambda(2.0)$, see Example \ref{ex:two_points1}.}
		\label{2poletestimate}
	\end{subfigure}
	\hspace*{0.02\linewidth}
	\begin{subfigure}[t]{.45\linewidth}
		\centering
		\includegraphics[width=\linewidth]{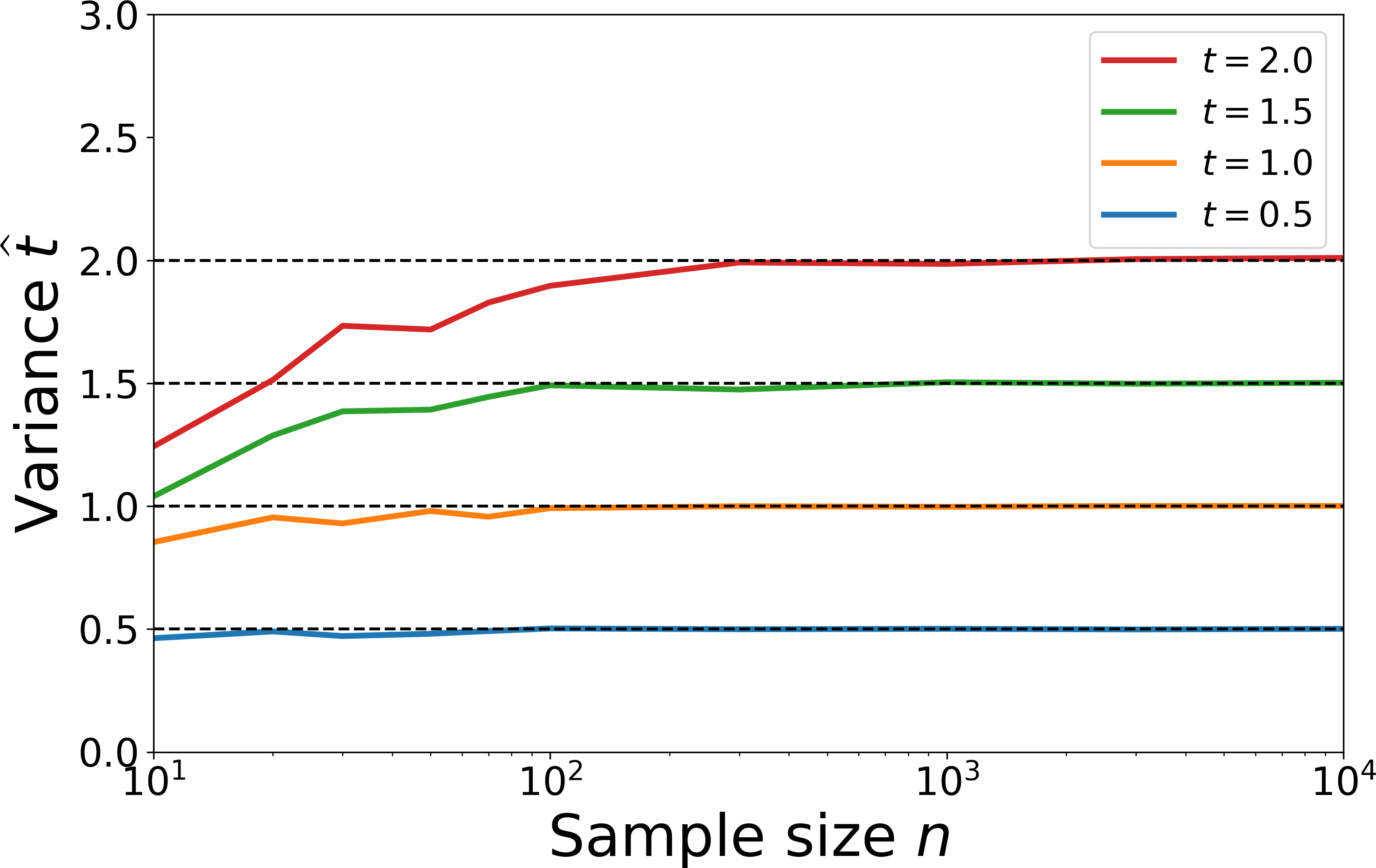}
		\caption{Estimates of $\widehat{t}$ for the Brownian normal distribution with $t = 0.5,1,1.5,2$.}
		\label{BrownianNormalTestimate}
	\end{subfigure}
	\caption{The plots display estimates of the diffusion variance $\widehat{t}$. The left plot show estimates on the two-pole distribution for various south pole weights $\alpha$ (yielding smeariness) and the right panel shows estimates from a \BND\ for various diffusion variances $t$.\label{fig:joint_estimation}}
\end{figure}

For the Brownian normal distribution we recover the true variance, as displayed in Figure \ref{BrownianNormalTestimate}, which illustrates asymptotic consistency of the estimator. For the two-pole distribution, the variance $t$ estimated jointly with the mean $\mu$ is converges the variance $t=s$ leading to smeariness, as seen in Figure \ref{2poletestimate}. This remarkable result does not contradict Corollary \ref{cor:nonsmeariness}, since the two pole distribution features a critical point for $\nabla_x\ln p(x,\mu,t)$ for $x$ being the south pole and thus violates the assumptions of the corollary. While the fact that the mean, if jointly estimated with the diffusion variance, is smeary in the two-pole model is undesirable, the non-uniqueness of the Fr\'echet mean in the same model is arguable even worse. Manually fixing a higher $t$ than estimated, leads to a more benign, non-smeary asymptotic behavior of $\mu$. This illustrates that diffusion means can have preferable asymptotic behavior to the Fr\'echet mean even in pathological cases.

\section{Application}
\subsection{Reducing Finite Sample Smeariness (FSS) in Geomagnetic Data}\label{scn:application}

In contrast to smeariness, i.e. that the variance of estimators $\widehat \mu_n$ based on a sample $X_1,\ldots,X_n$ of a population mean $\mu$ scales not asymptotically with inverse sample size $1/n$, there are distributions where this happens within finite sample size regime only. This phenomenon has been called \emph{finite sample smeariness} (FSS) by \citet{hundrieser_finite_2020} and is measured, given an underlying metric $d(\cdot,\cdot)$, by the \emph{variance modulation} that has been introduced by \citet{pennec_curvature_2019}
	\begin{align*}
    \mathfrak{m}_n := \frac{n\mathbb E[d(\widehat \mu_n, \mu)^2]}{\sum_{j=1}^n d(X_j,\mu)^2}\,.
\end{align*}
While $\mathfrak{m}_n = 1$ is constant in $n$ in case of Euclidean spaces, in Figures \ref{BNDestimates}, \ref{MagneticPolePlot} and \ref{MagneticPolePlot_sim} the estimators of the curves $n\mapsto \mathfrak{m}_n$ initially increase as in case of smeariness and settle (sometimes after a moderate decrease) at a constant larger than one. Presence of FSS, i.e. of $\mathfrak{m}_n > 1$ makes, among others, quantile based tests inapplicable, in contrast to bootstrap tests, having lowered power, though, see \citet{hundrieser_finite_2020}. This effect is stronger the higher the initial overshoot and the higher the asymptotic horizontal.

Spherical statistics has a natural application to geolocation data, as they can be considered as observed random variables on the $2$-sphere $\cS^2$. To show the occurrence of finite sample smeariness, as described by \citet{hundrieser_finite_2020}, on real data, we here include an example of estimating both the Fr\'echet mean and diffusion $t$-mean for $t = 1$ and $t=2$ on data sets of magnetic pole locations during geomagnetic pole reversals presented in \citet{McEl96} and downloadable from \href{ftp://ftp.ngdc.noaa.gov/geomag/Paleomag/access/ver3.5}{ftp://ftp.ngdc.noaa.gov/geomag/Paleomag/access/ver3.5}, where each curve in the panels of Figure \ref{MagneticPolePlot} represent a separate data set. These data sets were investigated by \citet{eltzner_geometrical_2020}, who found finite sample smeariness of the Fr\'echet mean. The distribution of the magnetic pole locations during the reversals is similar to the bimodal \BND \space of Example \ref{ex:bimodal-brownian}, typically with lower variance. The limit of zero variance corresponds to the two pole distribution of Example \ref{ex:two_points1}.

\begin{figure}[ht]
	\centering
	\vspace{0pt}
	\begin{subfigure}[t]{.32\linewidth}
		\centering
		\includegraphics[width=\linewidth]{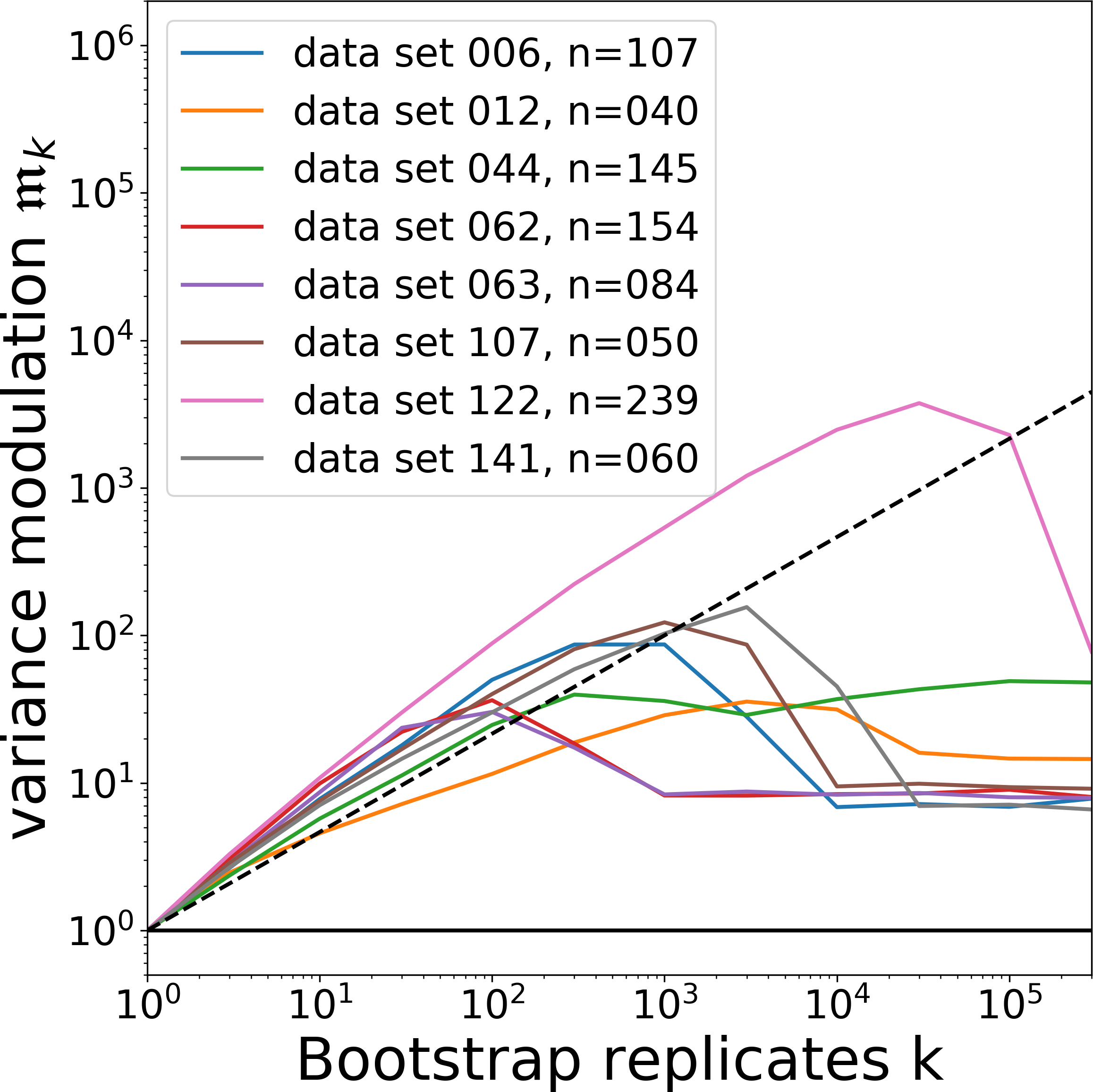}
		\caption{Fr\'echet mean}
		\label{fig:MagneticPolePlot-sub1}
	\end{subfigure}\hfill
	\begin{subfigure}[t]{.32\linewidth}
		\centering
		\includegraphics[width=\linewidth]{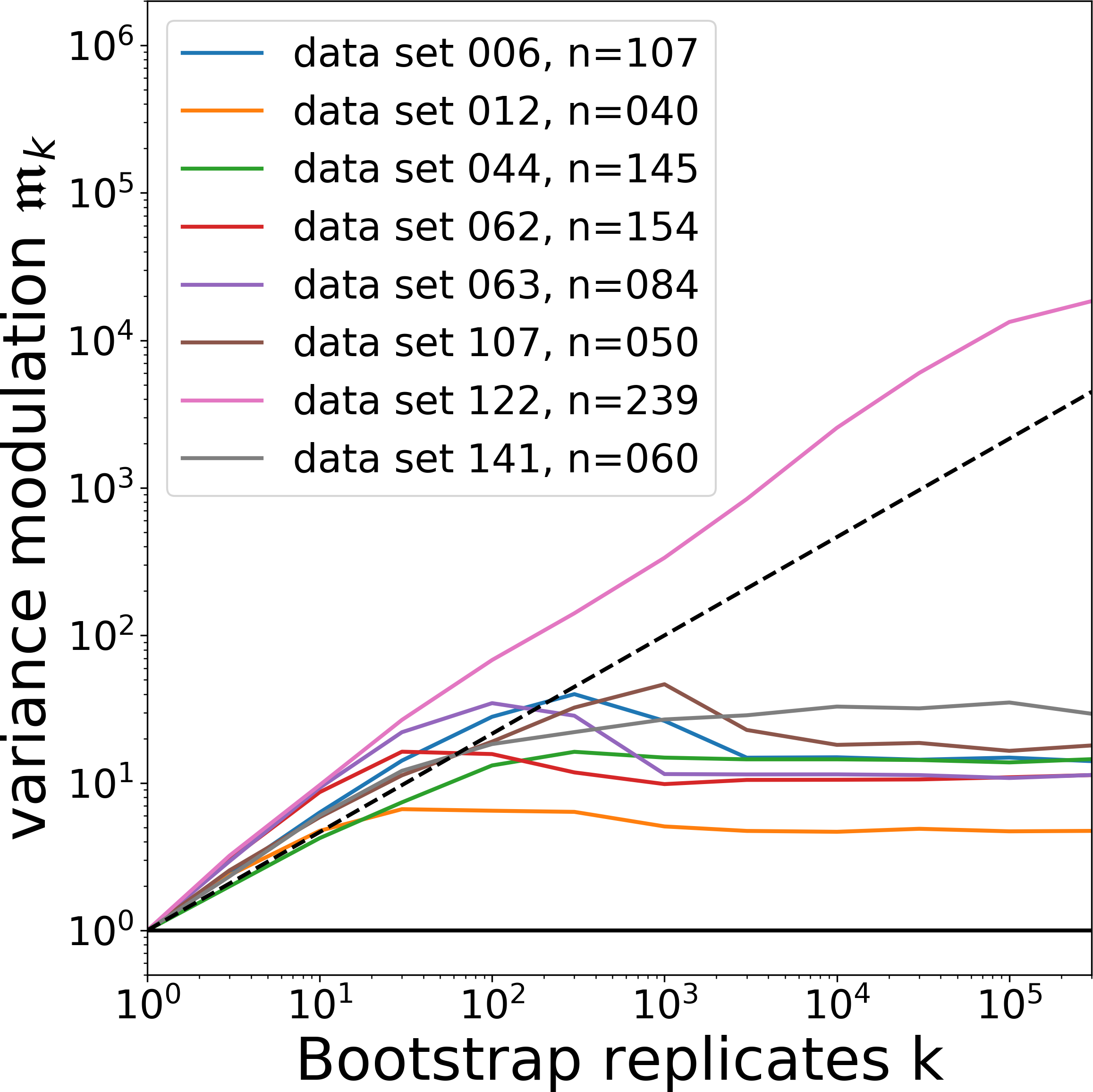}
		\caption{$t = 1$ diffusion mean}
		\label{fig:MagneticPolePlot-sub2}
	\end{subfigure}\hfill
	\begin{subfigure}[t]{.32\linewidth}
		\centering
		\includegraphics[width=\linewidth]{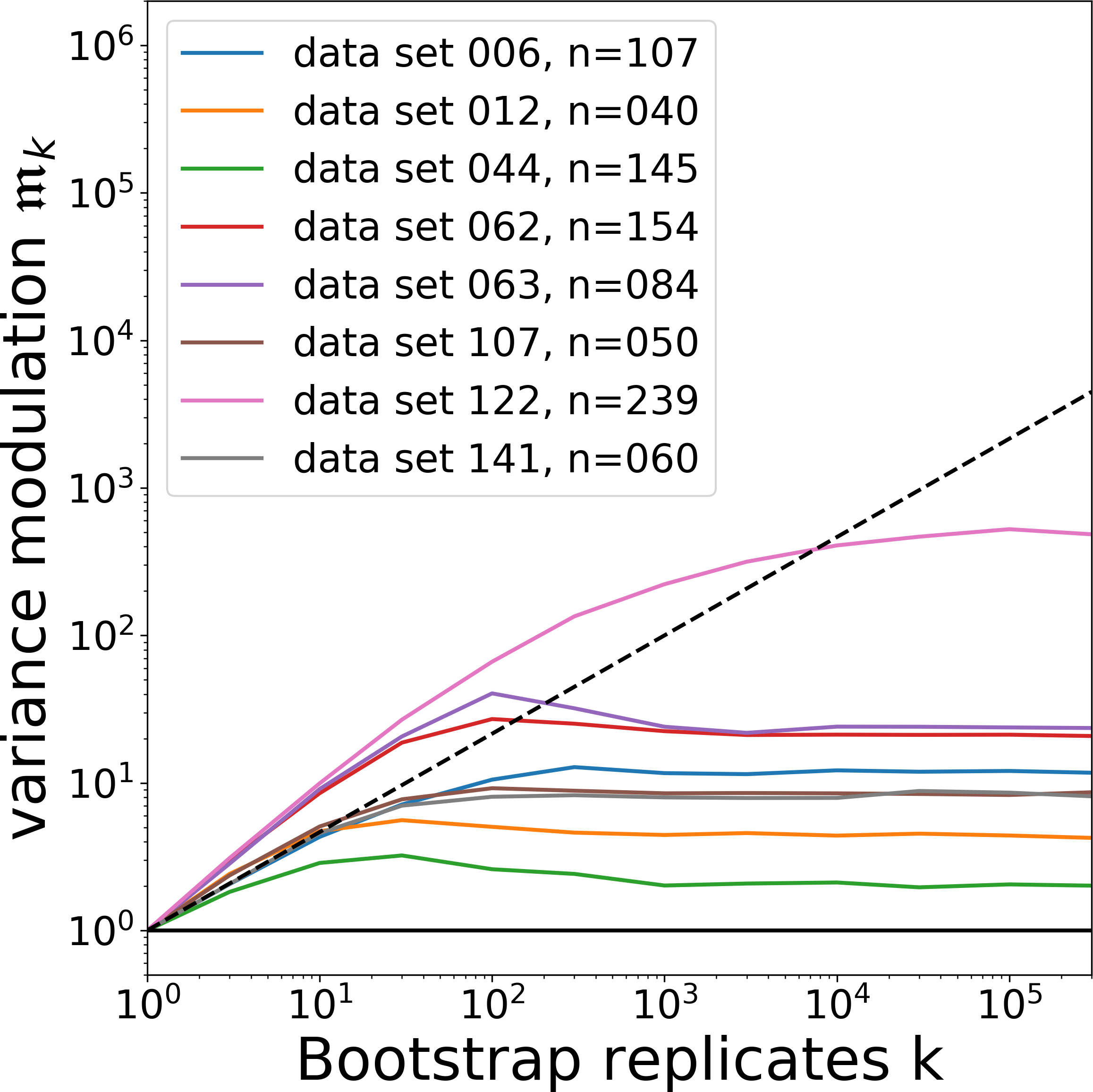}
		\caption{$t = 2$ diffusion mean}
		\label{fig:MagneticPolePlot-sub3}
	\end{subfigure}\hfill
	\caption{Bootstrap variance modulation curves (\ref{eq:bootstrap-modulation}) 
	of the estimated Fr\'echet mean and diffusion mean for $t=1$ and $t=2$ of eight magnetic pole data sets from \citet{McEl96}, each represented by a different color. The dashed line indicates $2$-smeariness while the solid black horizontal indicates $0$-smeariness. }
	\label{MagneticPolePlot}
\end{figure}

Figure \ref{MagneticPolePlot} displays the $k$-out-of-$n$ bootstrap variance modulations as introduced in \citet{eltzner_geometrical_2020,hundrieser_finite_2020}
\begin{align}\label{eq:bootstrap-modulation}
    \mathfrak{m}_k := \frac{k\mathrm{Var}[\mu_k^*]}{\mathrm{Var}[\{X_j\}_{j=1}^n]}
\end{align}
of the Fr\'echet mean and diffusion mean for $t = 1$ and $t = 2$ of eight different data sets. It is evident from the plots that the choice of $t$ has an effect on FSS, which is consistent with the results from the previous examples: By increasing $t$, asymptotic variance is reduced and initial, partly strong, overshoots are eventually removed, i.e. effects of FSS are reduced by increasing $t$.

Lastly, we jointly estimate the diffusion mean and diffusion variance for the data sets of magnetic pole locations during geomagnetic pole reversals \citet{McEl96}, which yields the results displayed in Figure~\ref{MagneticPolePlot_sim}. The results indicate a lower maximal variance modulation for the diffusion mean than for the Fr\'echet mean. Above bootstrap sample sizes of $k=100$ all curves appear compatible with the standard CLT rate. The modulation curves for the diffusion means appear close to monotonically growing. In the notation introduced by \citet{hundrieser_finite_2020} this means that there is no Type II FSS. The estimated $t$ parameters lie approximately in the range $[1.3,3.1]$, with $t$-estimates being smaller for the data sets with faster convergence to the CLT rate.

\begin{figure}[ht]
	\centering
	\begin{subfigure}[t]{.42\linewidth}
		\centering
		\includegraphics[width=\linewidth]{plot_diff_means_smeary_magnets_0}
		\caption{Fr\'echet mean}
		\label{fig:magnets-sub1}
	\end{subfigure}
	\hspace*{0.05\linewidth}
	\begin{subfigure}[t]{.42\linewidth}
		\centering
		\includegraphics[width=\linewidth]{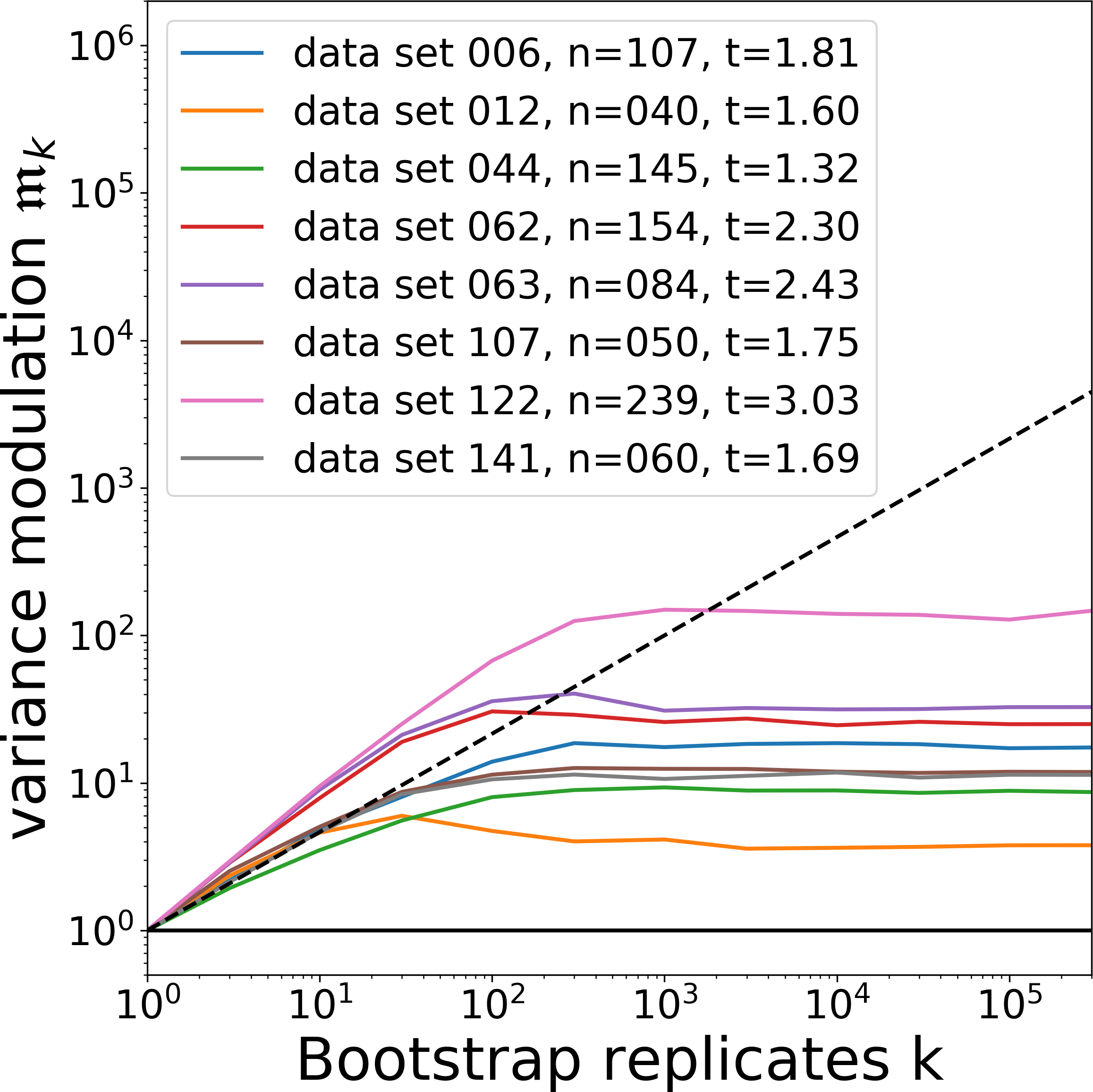}
		\caption{diffusion mean with fitted $t$}
		\label{fig:magnets-sub2}
	\end{subfigure}\hfill
	\caption{Bootstrap variance modulation curves (\ref{eq:bootstrap-modulation}) of the estimated Fr\'echet mean and diffusion mean for optimal $t$-values of eight magnetic pole data sets from \citet{McEl96}, each represented by a different color. The dashed line indicates $2$-smeariness while the solid black horizontal indicates Euclidean $0$-smeariness.} 
	\label{MagneticPolePlot_sim}
\end{figure}

\begin{table}
\caption{Variances of $n$-out-of-$n$ bootstrap means for the Fr\'echet mean (3rd column) and the diffusion means (4th column), with original sample size $n$ (2nd column) of the corresponding data sets (1st column), and the ratio diffusion mean variance over Fr\'echet mean variance (5th column) where the mean and diffusion variance $t$ (6th column) have been estimated jointly. 
\label{tab:variances}}
  \begin{center}
    \begin{tabular}{l|c|c|c|c|c}
      Data set & sample size $n$ & Fr\'echet mean variance & diffusion mean variance & ratio & $t$\\
      \hline
      006 & 107 & 1.118 & 0.323 & 0.289 & 1.809\\
      012 &  40 & 0.403 & 0.281 & 0.697 & 1.601\\
      044 & 145 & 0.394 & 0.119 & 0.302 & 1.322\\
      062 & 154 & 0.482 & 0.487 & 1.010 & 2.303\\
      063 &  84 & 1.127 & 1.049 & 0.931 & 2.429\\
      107 &  50 & 1.023 & 0.473 & 0.462 & 1.754\\
      122 & 239 & 1.932 & 1.173 & 0.607 & 3.031\\
      141 &  60 & 0.824 & 0.406 & 0.493 & 1.685
    \end{tabular}
  \end{center}
\end{table}
\begin{figure}[ht]
	\centering
	\begin{subfigure}[t]{.33\linewidth}
		\centering
		\includegraphics[width=\linewidth]{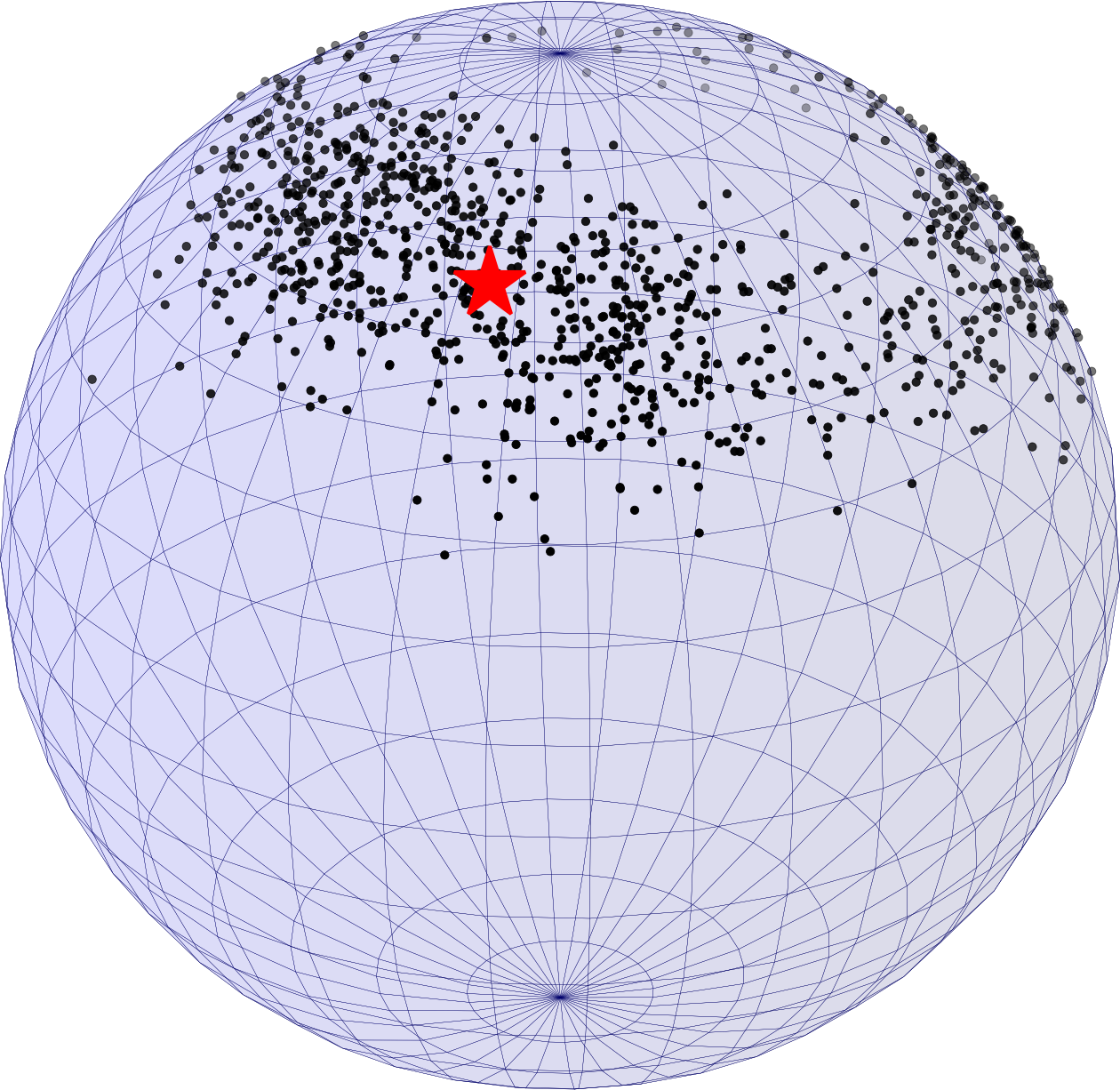}
		\caption{Data set 044 Fr\'echet means}
	\end{subfigure}
	\hspace*{0.05\linewidth}
	\begin{subfigure}[t]{.33\linewidth}
		\centering
		\includegraphics[width=\linewidth]{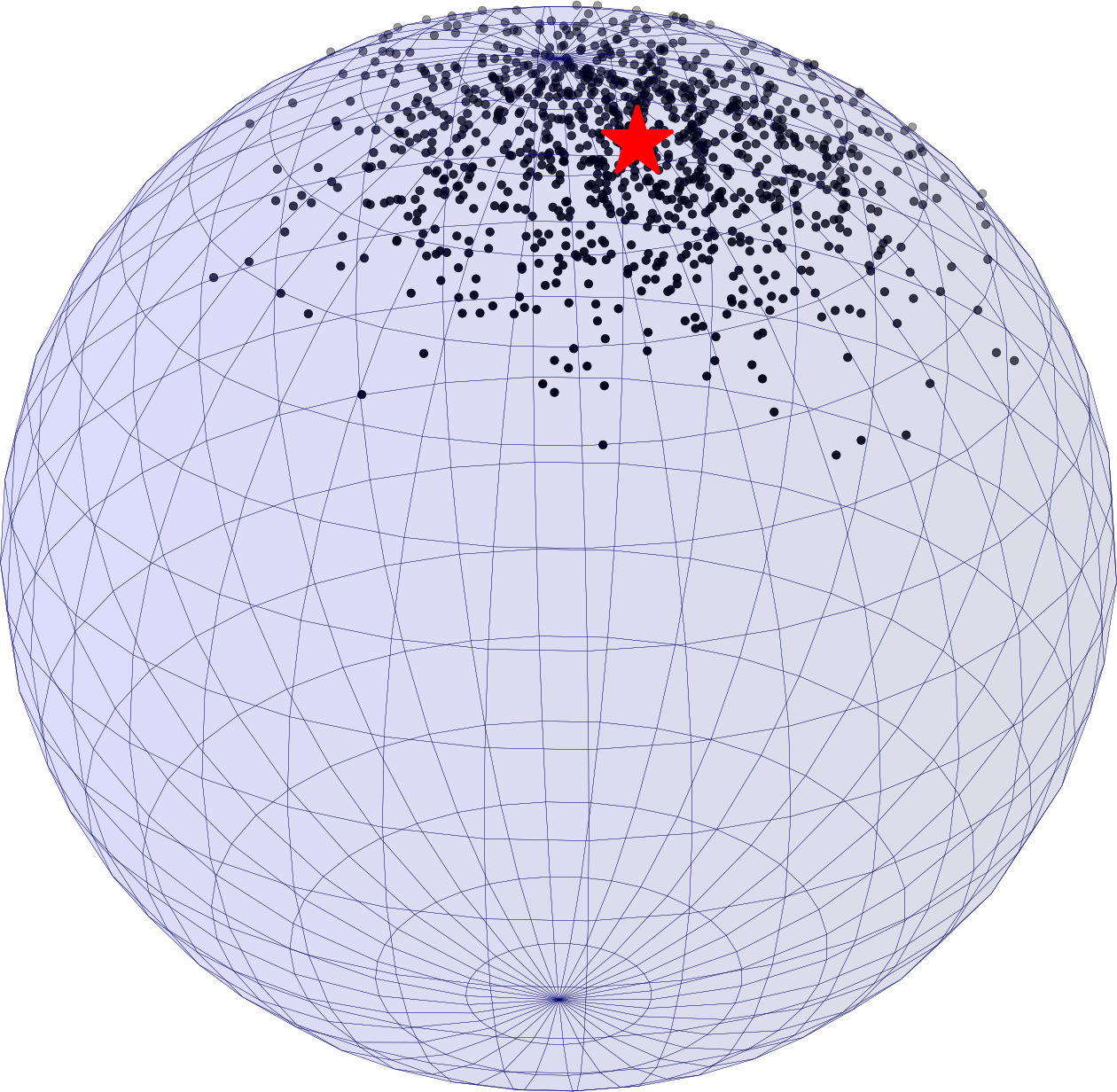}
		\caption{Data set 044 diffusion means}
		\label{fig:bootstrap_vars-sub2}
	\end{subfigure}\hfill\\
	\begin{subfigure}[t]{.33\linewidth}
		\centering
		\includegraphics[width=\linewidth]{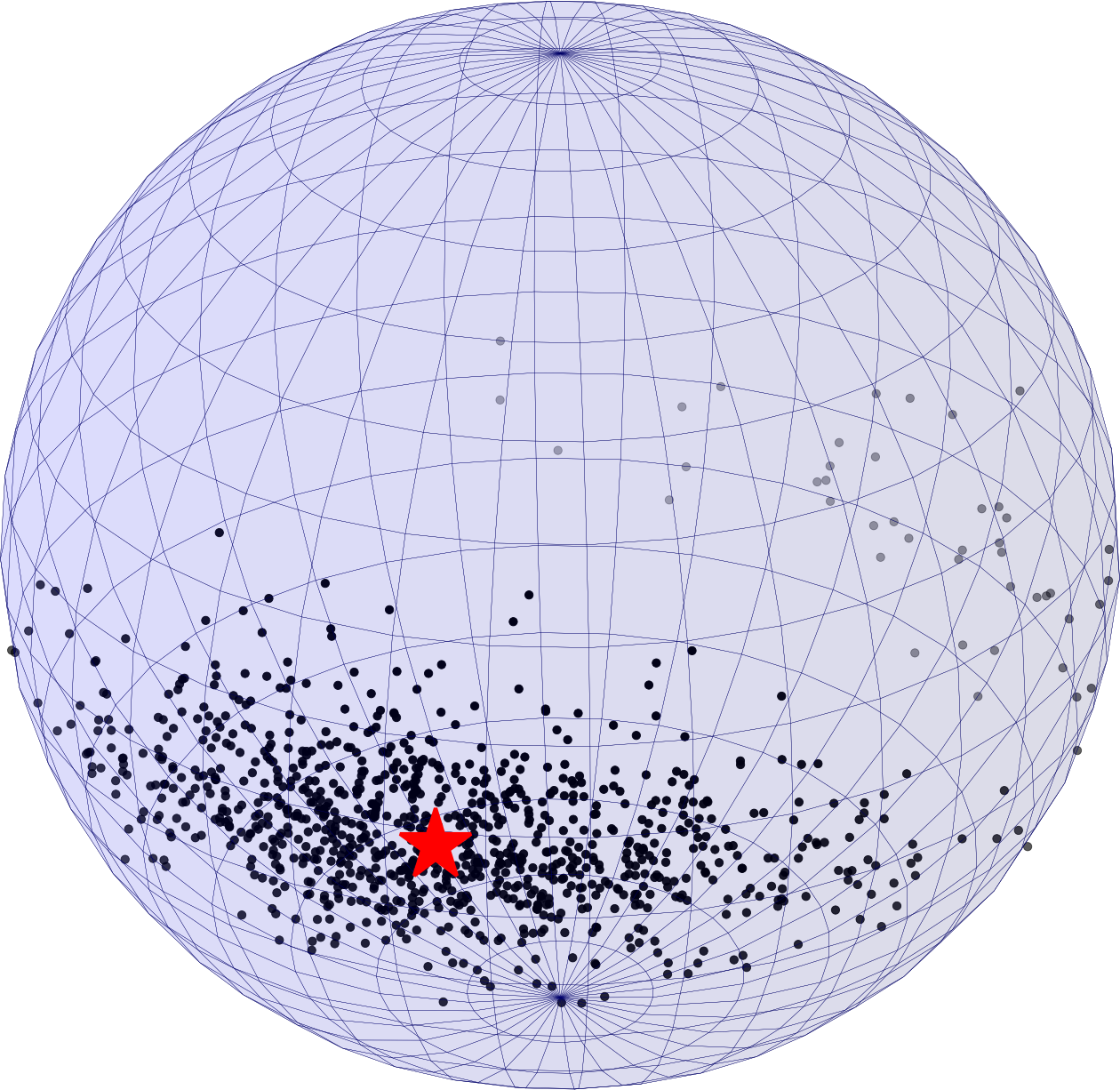}
		\caption{Data set 062 Fr\'echet means}
		\label{fig:bootstrap_vars-sub3}
	\end{subfigure}
	\hspace*{0.05\linewidth}
	\begin{subfigure}[t]{.33\linewidth}
		\centering
		\includegraphics[width=\linewidth]{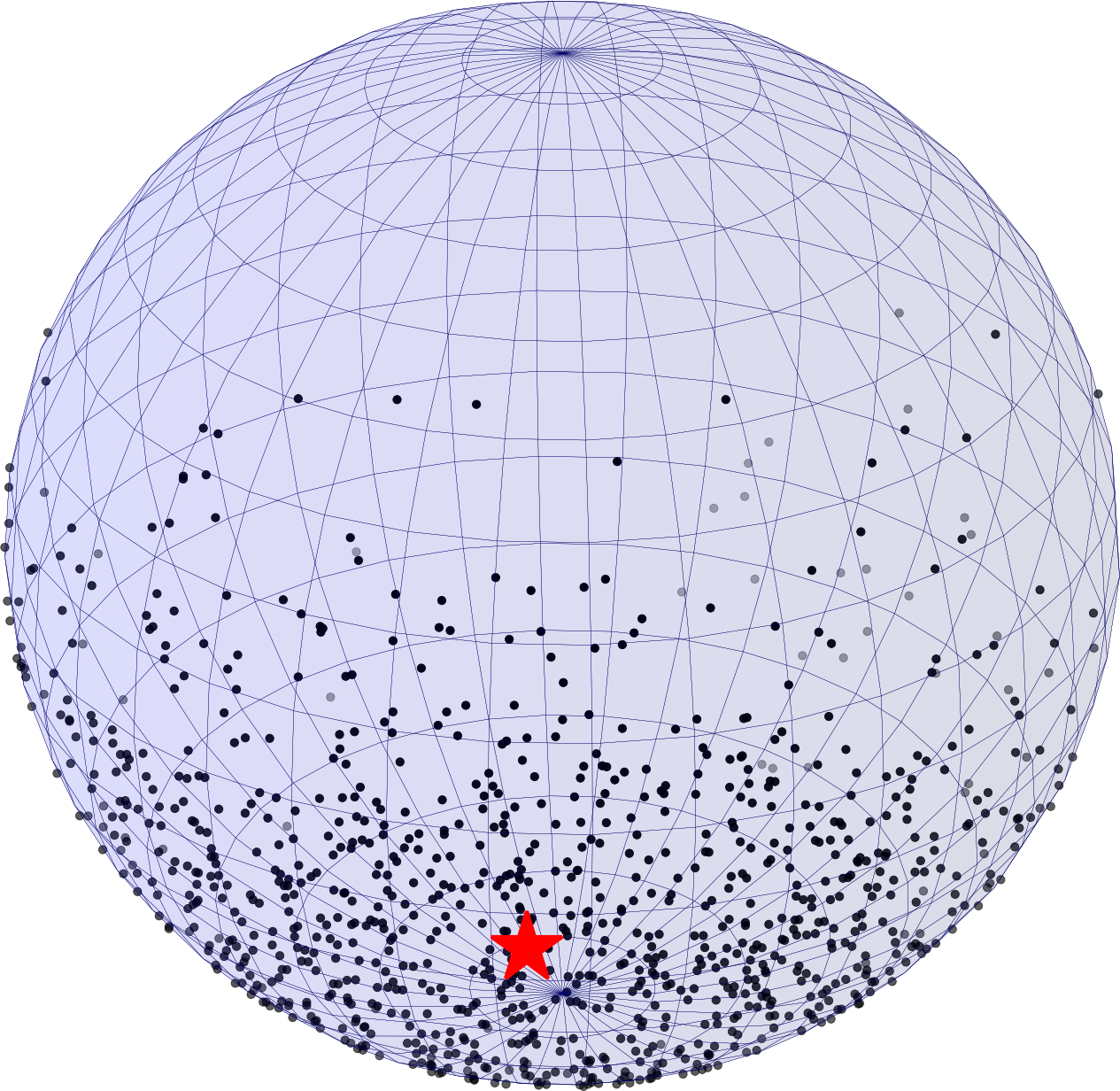}
		\caption{Data set 062 diffusion means}
		\label{fig:bootstrap_vars-sub4}
	\end{subfigure}\hfill
	\caption{Bootstrap means (black dots) for data sets 044, where diffusion means have a much lower variance than Fr\'echet means, and 062, where variances are comparable. The red stars indicate the sample means. Panel~(a) shows that the bootstrapped Fr\'echet means are near a small circle, which is consistent with the circular mode of the sample mean distribution in case of smeariness. In contrast, the bootstrapped diffusion means displayed in Panel~(b) feature a fairly concentrated normal distribution consistent with a non-smeary mean. In Panel~(c) the distribution of bootstrapped Fr\'echet means is again very anisotropic, featuring a tail of almost antipodal outliers, indicating smeariness. While the bootstrapped diffusion means displayed in Panel~(d) have a similar high overall variance, the distribution is rather isotropic around the sample mean, indicating isotropic FSS.}
	\label{fig:bootstrap_vars}
\end{figure}

In Table \ref{tab:variances} we compare the variances of $n$-out-of-$n$ bootstrap means for the Fr\'echet mean and the diffusion mean with jointly estimated diffusion variance. These are relevant for the power of hypothesis tests, smaller variance translating to higher power. The diffusion mean variance is clearly smaller for all data sets except data sets 062 and 063, where both variances are of similar size. This indicates that in case of a finite sample smeary Fr\'echet means, diffusion means achieve higher power in hypothesis tests.

In Figure \ref{fig:bootstrap_vars} we illustrate the bootstrapped samples for data sets 044, for which the diffusion mean has much lower variance than the Fr\'echet mean, and 062, for which variances are similar. One can see that the distribution of diffusion means appear much more isotropic around the sample mean.

These results indicate that a joint estimation of $\mu$ and $t$ generally leads to a diffusion mean $\mu_{n}$ which exhibits considerably lower magnitude of FSS than the Fr\'echet mean. 

\subsection{Power of Hypothesis Tests}

To see that the reduction in FSS is of practical importance, we compare the results of hypothesis tests for the pole position on the $8$ geomagnetism data sets discussed above and on the wind direction data set discussed in \cite{hundrieser_finite_2020}. We perform the two sample $T^2$ tests for heteroscedastic data using bootstrapped covariances and quantiles designed to achieve high power as discussed in detail in \cite{eltzner_hyptest_2017}. We display the resulting sets of p-values in terms of pp-plots in Figure~\ref{fig:tests}.

For the wind direction data, we considered data sets for the two cities separately and only took into account tests between data sets of which at least one displayed considerable FSS with a variance modulation of at least $5$. This was the case for $3$ out of $20$ data sets for Basel, leading to $54$ tests considered and for $5$ out of $20$ data sets for G\"ottingen, leading to $85$ tests considered, for a total number of $139$ hypothesis tests.

\begin{figure}[ht]
	\centering
	\begin{subfigure}[t]{.42\linewidth}
		\centering
		\includegraphics[width=\linewidth]{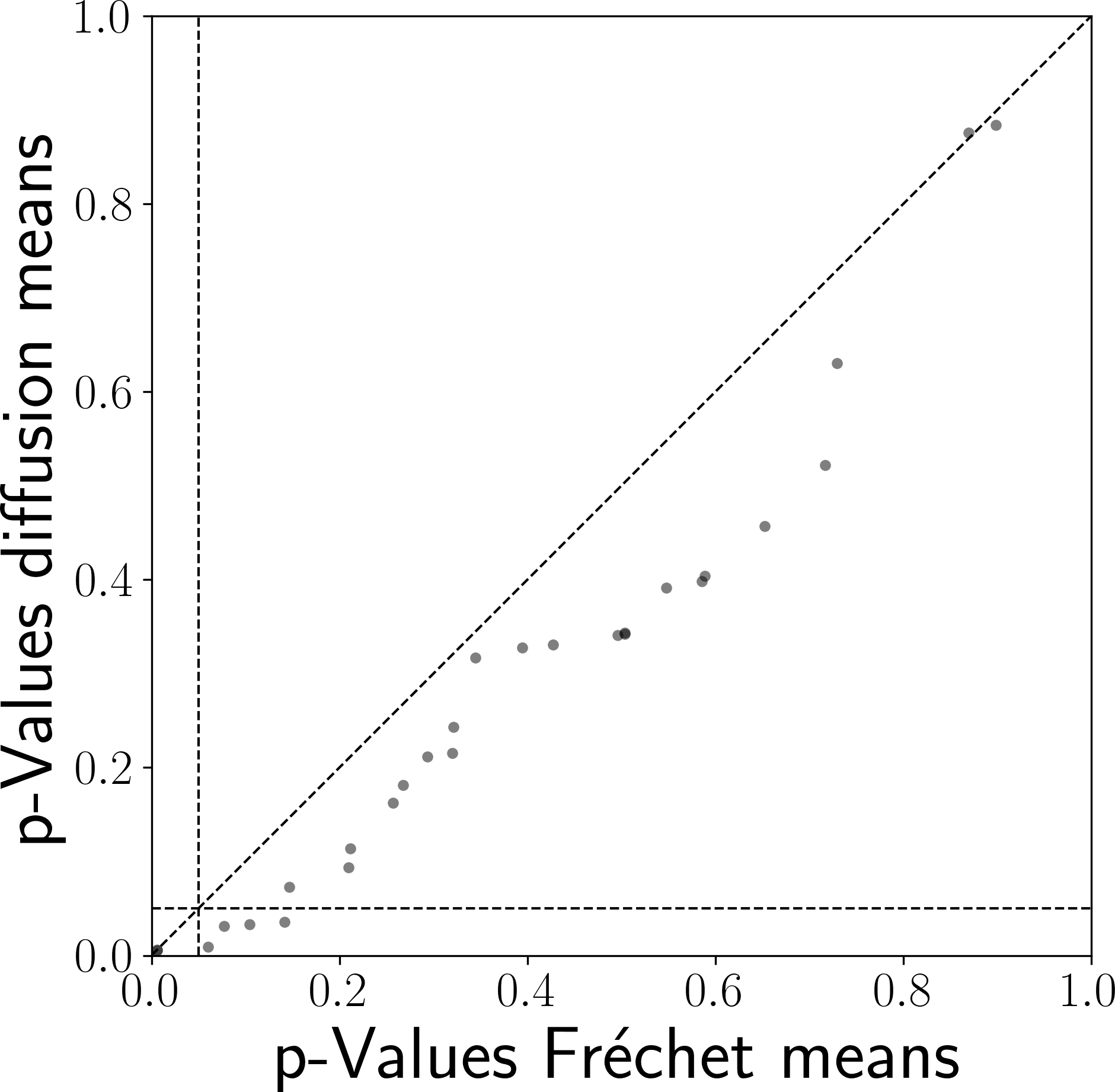} 
		\caption{Geomagnetism data}
		\label{fig:test-sub1}
	\end{subfigure}
	\hspace*{0.05\linewidth}
	\begin{subfigure}[t]{.42\linewidth}
		\centering
		\includegraphics[width=\linewidth]{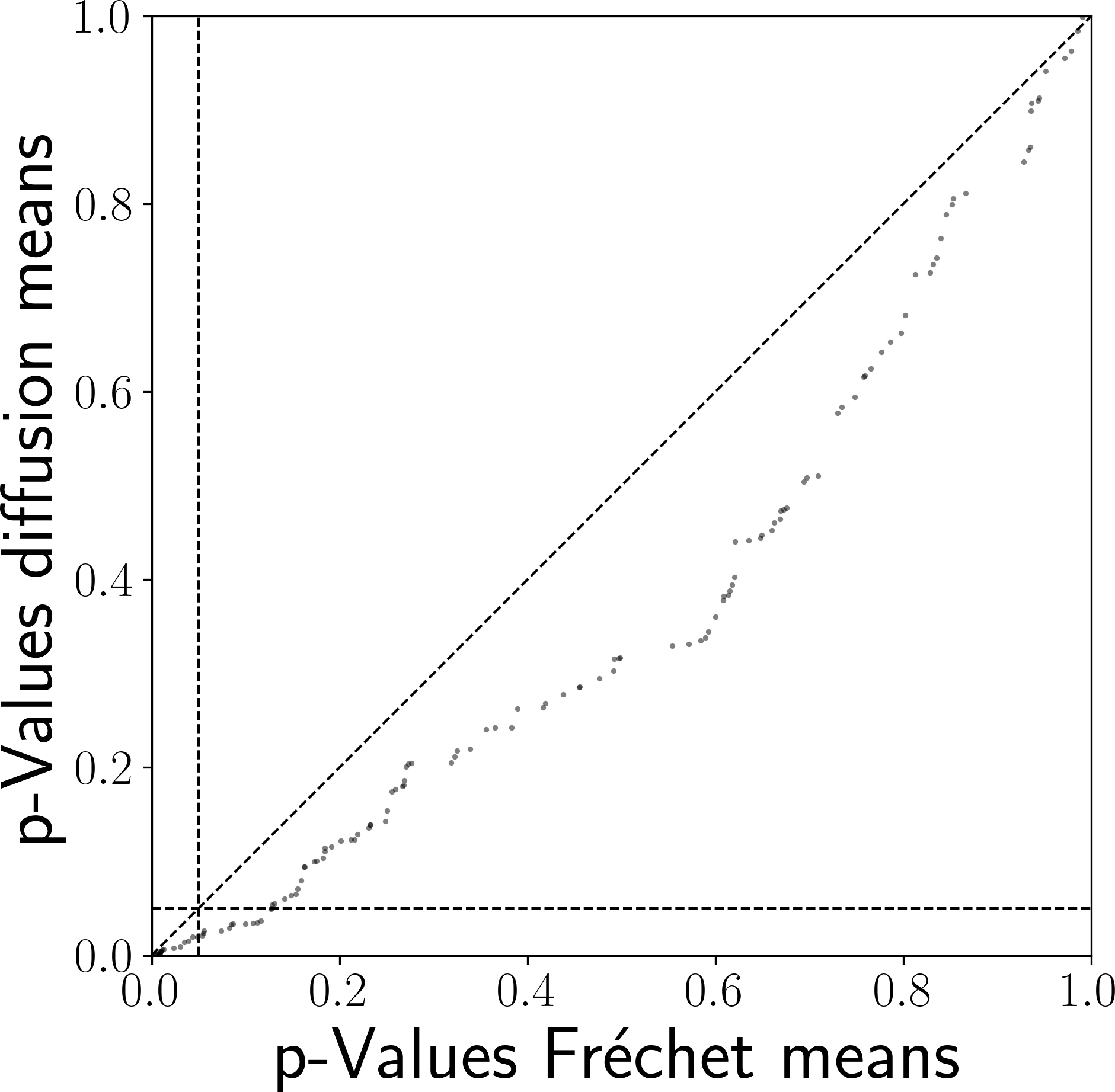}
		\caption{Wind directions}
		\label{fig:test-sub2}
	\end{subfigure}\hfill
	\caption{Two sample $T^2$ tests for pole positions in the geomagnetism data (a) and mean wind direction for yearly wind direction data sets (b). For the wind direction data, we restricted attention to tests where at least one of the data sets exhibited a variance modulation of at least $5$ and we sorted p-values to highlight the improved power.} 
	\label{fig:tests}
\end{figure}

It is immediately obvious from Figure \ref{fig:tests} that the power of the test is much higher when using diffusion means with simultaneously estimated variance, highlighting the practical usefulness of diffusion means in a realistic scenario for data on positively curved spaces.

\section*{Acknowledgments}

P. Hansen and S. Sommer gratefully acknowledge support by the Novo Nordisk Foundation grant NNF18OC0052000 and the Villum Foundation grants 22924 and 40582. B. Eltzner and S. Huckemann gratefully acknowledge funding by the DFG SFB 1456.

\appendix

\section{Auxiliary Results}

\subsection{Auxiliary Lemmas for Theorem \ref{thm:diff-ext}}\label{subsec:thm32}

\begin{lemma} \label{lem:wrapped-normal}
  The wrapped normal distribution can be expressed as
  \begin{align*}
  p(x,y,t) = \frac{1}{2\pi} \prod_{k=1}^\infty \limits \left(1-e^{-kt}\right) \left( 1 + 2 e^{-(k-1/2)t} \cos(x-y) + e^{-(2k-1)t} \right) \, .
  \end{align*}
\end{lemma}

\begin{proof}
  For any Schwartz function $f(x)$ let $g(x) := f(2\pi x +z)$. Now, using the Poisson summation formula, proved e.g. in Chapter 3.2.3 of \cite{grafakos2008}, for $g$, we have
  \begin{align*}
  \sum_{k=-\infty}^\infty \limits f(2\pi k + z) =& \sum_{k=-\infty}^\infty \limits g(k) = \sum_{k=-\infty}^\infty \limits \,\, \int_{-\infty}^{\infty} \limits g(x) e^{-i2\pi kx} dx = \sum_{k=-\infty}^\infty \limits \,\, \int_{-\infty}^{\infty} \limits f(2\pi x +z) e^{-i2\pi kx} dx\\
  =& \sum_{k=-\infty}^\infty \limits \frac{1}{2\pi} \int_{-\infty}^{\infty} \limits f(x +z) e^{-i kx} dx = \frac{1}{2\pi} \sum_{k=-\infty}^\infty \limits e^{ikz} \int_{-\infty}^{\infty} \limits f(x) e^{-i kx} dx
  \end{align*}
  
  For the wrapped normal distribution from Equation~\eqref{wrap-Gauss:eqn} in the article this yields
  \begin{align*}
  p(x,y,t) = \frac{1}{2\pi} \sum_{k=-\infty}^\infty e^{-t k^2/2 + i k (x - y)} = \frac{1}{2\pi} \sum_{k=-\infty}^\infty \left(e^{-t/2}\right)^{k^2} \left(e^{i (x - y)/2} \right)^{2k} \, .
  \end{align*}
  
  Now we can use the Jacobi Triple product (e.g. \cite{andrews1965simple})
  \begin{align*}
  \sum_{k=-\infty}^\infty a^{k^2} b^{2k} = \prod_{k=1}^\infty \left( 1 - a^{2k}\right) \left( 1 + a^{2k-1} b^2\right) \left( 1 +\frac{a^{2k-1}}{b^2}\right)
  \end{align*}
  with $a := e^{-t/2}$ and $b := e^{-i (x - y)/2}$ to get
  \begin{align*}
  p(x,y,t) = \frac{1}{2\pi} \prod_{k=1}^\infty \limits \left(1-e^{-kt}\right) \left( 1 + 2 e^{-(k-1/2)t} \cos(x-y) + e^{-(2k-1)t} \right) \, .
  \end{align*}
\end{proof}

\begin{lemma} \label{lem:rho1-derivative}
  For the function
  \begin{align*}
  \rho(x,y,s) :=& - \frac{1}{s} \sum_{k=1}^\infty \left( \log\left(1-s^{2k}\right) + \log \left( 1 + 2 s^{2k-1} \cos(x-y) + s^{4k-2}\right) \right)
  \end{align*}
  the derivative $\frac{d}{ds} \rho(x,y,s)$ exists and is uniformly bounded for any $x,y \in [-\pi, \pi)$ and $s \in [0,1/2]$.
\end{lemma}

\begin{proof}
  The infinite sum in $\rho(x,y,s)$ is absolutely convergent for $s \in [0,1/2]$ as can be seen by
  \begin{align*}
  \sum_{k=1}^\infty \left| \log\left(1-s^{2k}\right) \right| \le& \sum_{k=1}^\infty 2 \log(2) s^{2k}\\
  \sum_{k=1}^\infty \left|\log \left( 1 + 2 s^{2k-1} \cos(x-y) + s^{4k-2}\right) \right| \le& \sum_{k=1}^\infty \left( 2 s^{2k-1} + s^{4k-2} \right) \, .
  \end{align*}
  This means that derivatives by $s$ can be interchanged with the sum to yield
  \begin{align*}
  \frac{d}{ds} \rho(x,y,s) =& \frac{1}{s^2} \sum_{k=1}^\infty \left( \log\left(1-s^{2k}\right) + \log \left( 1 + 2 s^{2k-1} \cos(x-y) + s^{4k-2}\right) \right)\\
  &+ \frac{1}{s} \sum_{k=1}^\infty \left( \frac{2k s^{2k-1}}{1-s^{2k}} - \frac{(4k-2)s^{2k-2} \cos(x-y) + (4k-2) s^{4k-3}}{1 + 2 s^{2k-1} \cos(x-y) + s^{4k-2}}\right)
  \end{align*}
  The convergence of the infinite sums for $s>0$ follows from the same arguments as above. It remains to show that the limit $s \to 0$ is well-defined. Then a uniform upper bound is given simply by the maximum value. The only possibly non-zero contributions for $s \to 0$ are the summands with $k=1$
  \begin{align*}
  \lim_{s \to 0} \limits \frac{d}{ds} \rho(x,y,s) = \lim_{s \to 0} \limits & \frac{1}{s^2} \left( \log\left(1-s^{2}\right) + \log \left( 1 + 2 s \cos(x-y) + s^2\right) \right)\\
  &+ \left( \frac{2}{1-s^{2}} - \frac{2s^{-1} \cos(x-y) + 2}{1 + 2 s \cos(x-y) + s^{2}}\right) \, .
  \end{align*}
  Further, we use the expansion $\log(1+x) = x - x^2/2 + \mathcal{O}(x^3)$ to see
  \begin{align*}
  \lim_{s \to 0} \limits \frac{d}{ds} \rho(x,y,s) = \lim_{s \to 0} \limits & \left( - 1 + \mathcal{O}(s^2) + 2 s^{-1} \cos(x-y) + 1 - 2 \cos(x-y)^2 + \mathcal{O}(s) \right)\\
  &+ \left( \frac{2}{1-s^{2}} - \frac{2s^{-1} \cos(x-y) + 2}{1 + 2 s \cos(x-y) + s^{2}}\right)\\
  = \lim_{s \to 0} \limits & \left( \frac{2s^{-1} \cos(x-y) + 4 \cos(x-y)^2 + 2s \cos(x-y)}{1 + 2 s \cos(x-y) + s^{2}} - 2 \cos(x-y)^2 \right)\\
  &+ \left( \frac{2}{1-s^{2}} - \frac{2s^{-1} \cos(x-y) + 2}{1 + 2 s \cos(x-y) + s^{2}} \right) = 2 \cos(x-y)^2\, .
  \end{align*}
  The derivative $\frac{d}{ds} \rho(x,y,s)$ thus exists and is uniformly bounded for any $x,y \in [-\pi, \pi)$ and $s \in [0,1/2]$.
\end{proof}

\begin{lemma} \label{lem:rhom-derivative}
  For the function
  \begin{align*}
  \rho(x,y,s) :=& - \frac{1}{s} \log \left( \sum_{k=0}^\infty s^{k}s^{\frac{k(k-1)}{m}}\frac{2k+m-1}{m-1} C_k^{(m-1)/2}(\langle x,y\rangle_{\R^{m+1}} ) \right)
  \end{align*}
  with $m > 1$, the derivative $\frac{d}{ds} \rho(x,y,s)$ exists and is uniformly bounded for any $x,y \in [-\pi, \pi)$ and $s \in [0,1/2]$.
\end{lemma}

\begin{proof}
  For the Gegenbauer polynomials, it is known that
  \begin{align*}
  |C_k^{(m-1)/2}(x)| \le C_k^{(m-1)/2}(1) = \binom{k + m - 2}{k} \le (k+m-2)^{m-2}\, ,
  \end{align*}
  so the sum is absolutely convergent for $s \in [0, 1/2]$. Next, we calculate the derivative
  \begin{align*}
  \frac{d}{ds} \rho(x,y,s) =& \frac{1}{s^2} \log \left( \sum_{k=0}^\infty s^{k}s^{\frac{k(k-1)}{m}}\frac{2k+m-1}{m-1} C_k^{(m-1)/2}(\langle x,y\rangle_{\R^{m+1}} ) \right)\\
  &- \frac{\sum_{k=1}^\infty \limits \left(k + \frac{k(k-1)}{m} \right)s^{k-2}s^{\frac{k(k-1)}{m}}\frac{2k+m-1}{m-1} C_k^{(m-1)/2}(\langle x,y\rangle_{\R^{m+1}} )}{\sum_{k=0}^\infty \limits s^{k}s^{\frac{k(k-1)}{m}}\frac{2k+m-1}{m-1} C_k^{(m-1)/2}(\langle x,y\rangle_{\R^{m+1}} )}
  \end{align*}
  It is clear that the sums converge, so the expression exists for $s > 0$. To ensure a well-defined limit for $s \to 0$ we use the expansion $\log(1+x) = x - x^2/2 + \mathcal{O}(x^3)$ and the expression for the first Gegenbauer polynomial $C_0^{(m-1)/2} = 1$ and restrict to non-vanishing orders, i.e. $k \le 1$ in the series
  \begin{align*}
  \lim_{s \to 0} \limits \frac{d}{ds} \rho(x,y,s) = \lim_{s \to 0} \limits &\frac{1}{s^2} \log \left( 1 + s\frac{m+1}{m-1} C_1^{(m-1)/2}(\langle x,y\rangle_{\R^{m+1}} ) + \mathcal{O}(s^{2+2/m}) \right)\\
  &- \frac{s^{-1}\frac{m+1}{m-1} C_1^{(m-1)/2}(\langle x,y\rangle_{\R^{m+1}} ) + \mathcal{O}(s^{2/m})}{1 + s\frac{m+1}{m-1} C_1^{(m-1)/2}(\langle x,y\rangle_{\R^{m+1}} ) + \mathcal{O}(s^{2+2/m})}\\
  = \lim_{s \to 0} \limits & \left( s^{-1}\frac{m+1}{m-1} C_1^{(m-1)/2}(\langle x,y\rangle_{\R^{m+1}} ) + \mathcal{O}(s^{2/m}) \right)\\
  &- \frac{s^{-1}\frac{m+1}{m-1} C_1^{(m-1)/2}(\langle x,y\rangle_{\R^{m+1}} ) + \mathcal{O}(s^{2/m})}{1 + s\frac{m+1}{m-1} C_1^{(m-1)/2}(\langle x,y\rangle_{\R^{m+1}} ) + \mathcal{O}(s^{2+2/m})}\\
  =& \left( \frac{m+1}{m-1} C_1^{(m-1)/2}(\langle x,y\rangle_{\R^{m+1}} ) \right)^2
  \end{align*}
  The derivative $\frac{d}{ds} \rho(x,y,s)$ exists and is uniformly bounded for any $x,y \in S^m$ and $s \in [0,1/2]$.
\end{proof}

\subsection{Proofs of Lemmas \ref{loghdiff} and \ref{positivedifferentials}}
\label{appendix:sec3proofs}
\begin{proof}[Proof Lemma \ref{loghdiff}]
  Fix $m\geq 2$. Writing out $\frac{d}{dx} \ln \hfun(x) |_{x = y} = \frac{\hfun'(y)}{\hfun(y)}$, it follows from Remark 3 that $\frac{d}{dx}\lfun(x)$ is positive for all values of $m\geq 2$ and $t>0$. To simplify the calculations, we define the functions $\hfunl(x) := (m-1)A_{\cS}^{m}h_{t,m}(x)$ and
  \begin{align*}
  \frac{d^k}{dx^k}\hfunl^{i,j}(x) := \sum_{l=i}^j \frac{d^k}{dx^k} \Big( e^{-l(l+m-1)\frac12t}(2l+m-1)C_l^{(m-1)/2}(x) \Big).
  \end{align*}
  and for each $l\geq 0$.
  We shall complete the proof by showing that $s_{t,n}(x):=\hfunl(x)\hfunl''(x)-\hfunl'(x)^2 <0$ whenever the stated conditions on $t$ hold, by proving the two inequalities
  \begin{align}\label{firtscond}
  s_{t,n}(x)\leq & 2f_{t,m}^{0,0}(x)\frac{d^2}{dx^2}f_{t,m}^{2,2}(x)- \left(\frac34 \frac{d}{dx}f_{t,m}^{1,1}(x)\right)^2 \, ,\\
  \label{secondcond}
  B_t(x) :=& 2f_{t,m}^{0,0}(x)\frac{d^2}{dx^2}f_{t,m}^{2,2}(x)- \left(\frac34\frac{d}{dx}f_{t,m}^{1,1}(x)\right)^2 <0 \, .
  \end{align}
  In order to choose $\delta^*(m)$ such that each $t>\delta^*(m)$ satisfies Equation \eqref{secondcond}, writing out the terms yields
  \begin{equation*}
  B_t(x)
  \leq (m+1)(m-1)^2e^{-(m+1)t} \left(2(m+3)e^{-t} - \frac{9}{16}(m+1)\right)
  \end{equation*}
  which is negative for all $t>\tbound^*(m)$ when $\tbound^*(m)= \log\left(\frac{32(m+3)}{9(m+1)}\right)$. For $\tbound$ to satisfy Equation \eqref{firtscond}, we ensure that the first term of $\hfunl$ and $\frac{d^2}{dx^2}\hfunl$ dominates their respective tails. 
  
  Throughout this proof we will use the fact that $\sum_{i=0}^{\infty}a_i$ is bounded by $\frac{1}{1-r}|a_0|$ if for all $n\in \N_0$ it holds that $\left|\frac{a_{n+1}}{a_n}\right|<r$ for $r\in (0,1)$. We seek $\tbound_1(m)>0$ such that for all $t>\tbound_1(m)$, the bounds $|\hfunl^{0,\infty}(x)|\leq \sqrt{2}\hfunl^{0,0}(x)$ and $|\frac{d^2}{dx^2}\hfunl^{2,\infty}(x)|\leq \sqrt{2}\frac{d^2}{dx^2}\hfunl^{2,2}(x)$ hold. Using the identity $|C_n^\alpha(x)|\leq C_n^\alpha(1)$, it suffices to prove bounds for the series at $x=1$. Writing out the terms and using the identities and $C_n^\alpha(1) = \frac{\Gamma(n+2\alpha)}{n!\Gamma(2\alpha)}$ and $\frac{d}{dx}C_n^\alpha(x) = 2\alpha C_{n-1}^\alpha(x)$, we obtain the bounds for all $n\geq l$, 
  \begin{align}\label{boundquo}
  \left|\frac{\frac{d}{dx}\hfunl^{n+1,n+1}(1)}{\frac{d}{dx}\hfunl^{n,n}(1)}\right| \leq \frac{(m+n+l)}{n+1-l}e^{-(m+n)\frac12t} \leq (2l + m)e^{-(m+l)t}\leq me^{-\frac12mt}
  \end{align}
  To obtain the two bounds, choose $\tbound_1(m) = -\frac2m\ln\left(\frac{\sqrt{2}-1}{\sqrt{2}m} \right)$. 
  The final step in proving the bound in Equation \eqref{firtscond}, is to establish the upper bound $|\frac{d}{dx}\hfunl^{2,\infty}(x)| \leq \frac14\frac{d}{dx} \hfunl^{1,1}(x)$, since then 
  \begin{equation*}
  \frac{d}{dx}\hfunl(x)^2 \geq \left(\frac{d}{dx}\hfunl^{1,1}(x)- \frac14\frac{d}{dx}\hfunl^{1,1}(x)\right)^2 = \left( \frac34\frac{d}{dx}\hfunl^{1,1}(x)\right)^2
  \end{equation*}
  Notice that $|\frac{d}{dx}\hfunl^{2,\infty}(x)| \leq \frac{d}{dx}\hfunl^{2,\infty}(1)$ and $\frac14\frac{d}{dx} \hfunl^{1,1}(x)=\frac{1}{4}(m+1)e^{-(m-1)\frac12t}$. So in order to obtain the upper bound, we solve 
  \begin{equation}\label{first}
  \left| \frac{\frac{d}{dx}\hfunl^{3,3}(1)}{\frac{d}{dx}\hfunl^{2,2}(1)} \right| \leq 1-4(m+3)e^{-(m+1)\frac12t}
  \end{equation}
  Using the bound in Equation \eqref{boundquo}, we find the bound $\delta_2(m)= -\frac{2}{(m+1)} \ln \left(\frac{1}{(m+1)+4(m+3)} \right)$ by solving $(m+1)e^{-(m+1)\frac12t} = 1-4(m+3)e^{-(m+1)\frac12t}$.
  Since $\delta_1(m)<\delta_2(m)$ for all $m\geq 2$, we choose 
  \begin{align*}
  \delta(m) &= \max\{\delta^*(m),\delta_2(m)\}
  \end{align*}
\end{proof}

\begin{proof}[Proof Lemma \ref{positivedifferentials}]
  The first claim follows immediately from the definition of $\hfun$ and Remark 3. For the second claim, see that 
  \begin{equation*}
  \frac{d^n}{dx^n} \hfun(x) = \sum_{l=n}^{\infty} e^{-l(l+m-1)\frac12t}\frac{2l+m-1}{m-1} \frac{1}{A_{\cS}^{m}} ((m-1)\cdot...\cdot (m+2n-1))C_{l-n}^{(m-1)/2+n}(x).
  \end{equation*}
  where the first term is positive since the $C_0^{\frac{(m-1)}{2}+n}(x)=1$. To ensure that a sum $\sum_{i=0}^\infty a_i$ with $a_0>0$ is positive, it suffices to prove that $a_1>2^l|a_l|$ for all $l\geq1$. In our case, this means proving
  \begin{align*}
  e^{-n(n+m-1)\frac12t}(2n+m-1) \geq 2^l e^{-(n+l)(n+l+m-1)\frac12t}(2(n+l)+m-1)|C_{l-n}^{(m-1)/2+n}(x)|
  \end{align*}
  for all $l\geq 1$. Using the bound $|C_{l}^{(m-1)/2+n}(x)| \leq \frac{(m-1+2n)_l}{l!}$ \cite{kiepiela_gegenbauer_2003}, it suffices to show
  \begin{equation}\label{boundcoef}
  (2n+m-1) \geq 2^le^{-(l^2+2ln+lm-l)\frac12t}(2(n+l)+m-1)\frac{(m-1+2n)_l}{l!}
  \end{equation}
  where the (RHS) is a decreasing function in both $l,n$ and $m$ whenever $t\geq 1$. The inequality holds for $m=2$, $n=2$ and $l=1$, which completes the proof. 
\end{proof}

\subsection{Proof of Theorem \ref{thm:diff-mean-frechet-mean}}\label{subsec:thm47}

\begin{proof}
  
  First, we establish the following three claims where the first claim includes the first assertion of the theorem.
  \begin{description}
    \item[Claim 1:] $M \neq \emptyset$ is closed and bounded.
    \item[Claim 2:] $F_{t_k}(y_k) \to F_0(y)$ whenever $y_k \to y$ and $t_k \to 0$.
    \item [Claim 3:] If $E_{t}\neq \emptyset$ for all $t > 0$ sufficiently small, then $\min_{y\in \cM} F_{t}(y) =: \lw_{t} \to \lw_0 := \min_{y\in \cM} F_0(y)$ as $t\to 0$.
  \end{description}
  
  To see the first claim, note that local tightness, Equation~\eqref{eq:tightness}, guarantees the existence of $T(y) >0$ for every $y\in \cM$ such that $F_{t}(y) < \infty$ for all $t< T(y)$. For $t=0$, invoking the triangle inequality, note that $\dist(x,y)^2 \leq 2 \dist(X,y)^2 + 2\dist(X,x)^2$. Taking expected values yields $F_0(x) \geq \frac{1}{2}\dist(x,y)^2 - F_0(y)$. In consequence, $F_0$ assumes a minimum over the complete space $\cM$. By the same argument the set $M$ of minima is bounded, and as the preimage of a point under a continuous function, it is closed.
  
  To see the second claim, let $\epsilon > 0$, $\cM \ni y_k \to y\in \cM$ and let $K_1 \subseteq K_2 \subseteq \ldots \cM$ be an exhaustion by compact sets. Then Equation~\eqref{eq:uniform-conv} assures that for every $j \in \N$ there is $T_j>0$ such that
  $$ \int_{K_j} |\rho_{t_k}(X,y_k) - \rho_0(X,y_k)|\,d\bP^X \leq \frac{\epsilon}{4}\mbox{ whenever } t_k < T_j\,.$$
  By continuity over a compact set we also find $k_j \in \N$ such that
  $$ \int_{K_j} |\rho_{0}(X,y_k) - \rho_0(X,y)|\,d\bP^X \leq \frac{\epsilon}{4}\mbox{ whenever } k > k_j\,.$$
  Hence, whenever $k > k_j$ and $t_k<T_j$
  \begin{eqnarray*} \lefteqn{
      \int_{K_j} |\rho_{t_k}(X,y_k) - \rho_0(X,y)|\,d\bP^X}\\ 
    &\leq &
    \int_{K_j} |\rho_{t_k}(X,y_k) - \rho_0(X,y_k)|\,d\bP^X + 
    \int_{K_j} |\rho_{0}(X,y_k) - \rho_0(X,y)|\,d\bP^X \leq \frac{\epsilon}{2}\,.
  \end{eqnarray*}
  On the other hand, by the tightness condition, Equation~\eqref{eq:tightness}, 
  $$ \int_{\cM\setminus K_j}|\rho_{t_k}(X,y_k) - \rho_0(X,y)|\,d\bP^X \leq \int_{\cM \setminus K_j}\rho_{t_k}(X,y_k)\,d\bP^X +\int_{\cM\setminus K_j}\rho_0(X,y)\,d\bP^X \leq \frac{\epsilon}{2}$$
  whenever $j\geq j(\epsilon/4)$ and $0\leq t_k \leq T_0$. In consequence, setting $T^* = \min\{T(y),T_{j(\epsilon/4)}\}$, we have that
  $$ |F_{t_k}(y_k) - F_0(y)| \leq \int_{K_{j(\epsilon/4)}\cup (\cM \setminus K_{j(\epsilon/4)})} |\rho_t(X,y) - \rho_0(X,y)|\,d\bP^X \leq \epsilon $$
  for all $0\leq t_k < T^*$, yielding the first claim.
  
  To see the third claim, pick $y_t \in \cM$ with $F_{t}(y_{t}) =\lw_{t}$ for all $t\geq 0$ sufficiently small. Then, by Claim 2, for every $\epsilon >0$ there is $T(\epsilon) >0$ such that 
  $$\lw_0\leq F_0(y_{t}) \leq F_{t}(y_{t}) - \epsilon = \lw_{t} - \epsilon$$
  whenever $0\leq t \leq T(\epsilon)$, yielding $\lw_0 \leq \lim\inf_{t\to 0} \lw_t$.
  Similarly, for every $\epsilon >0$ there is $T'(\epsilon) >0$ such that
  $$\lw_0\geq F_t(y_0) - \epsilon \geq \lw_t - \epsilon$$
  for $0\leq t < T'(\epsilon)$, yielding $\lw_0 \geq \lim\sup_{t\to 0} \lw_t$. This establishes the third claim.
  
  Now we show the second assertion of the Theorem, namely Ziezold convergence. It is trivial if $E_{t_k} = \emptyset$ for all $k\in \N$ sufficiently large. Passing to a sub-sequence if necessary, we may thus assume that $E_{t_k} \neq \emptyset$ for all $k \in \N$. Then every $y\in \mathop{\bigcap}_{i=1}^\infty \overline{\mathop{\bigcup}_{k=1}^\infty E_{t_k}}$ is a cluster point of a sequence $y_k \in E_{t_k}$ with
  $$F_{t_k}(y_k) = \lw_{t_k} \to \lw_0$$
  by the third claim. By Claim 2 we have thus $F_{t_k}(y_k)\to F_0(y) = \lw_0$ yielding $y\in M$. 
  
  Finally, we show the third assertion of the Theorem, namely Bhattacharya and Patrangenaru convergence. It is trivial if $E_{t_k} = \emptyset$ for all $k\in \N$ sufficiently large. As reasoned above, we may thus assume $E_{t_k} \neq \emptyset$ for all $k \in \N$.
  Then $r_k := \sup_{y\in E_{t_k}} \dist(y,M)$ where $\dist(y,M) := \min_{x\in M} \dist(y,x)$ is well defined because the minimum is assumed as $M$ is a closed set by Claim 2 and $\cM$ is complete by hypothesis. 
  If $r_k$ was unbounded, we might assume, if necessary passing to a sub-sequence, that $r_k \to \infty$.
  In consequence, there would be sequences $y'_k \in E_{t_k}$ and $x_k \in M$ with $\dist(y'_k,x_k) \to \infty$. In consequence of coercivity (Hypothesis 2 of the theorem, among others, asserting suitable $y_0\in \cM$ and $C,R_k>0$), since $M$ is bounded by Claim 1, then $d(y'_k,y_0)\to \infty$ and hence
  \begin{eqnarray*}
    \lw_{t_k} &=& \E[\rho_t(X,y'_k)] \geq \int_{\{x\in \cM: \rho_t(x,y_0)<C\}} \rho_t(X,y'_k)\,d\bP^X \geq R_k \bP\{\rho_t(X,y_0)<C\} \to \infty
  \end{eqnarray*}
  in contradiction to $\lw_{t_k} \to \lw_0$ by Claim 3. Hence, $r_k$ is bounded and, since $M$ is bounded by Claim 1, every sequence $y'_k \in E_{t_k}$ is bounded as well. By Heine-Borel (Hypothesis 1. of the theorem), every such sequence has a cluster point $y$ which, by the second assertion of the theorem, is in $M$, i.e. $r_k \to 0$. This yields Bhattacharya and Patrangenaru convergence.
\end{proof}

\section{Example: point mass and uniform hemisphere}\label{sec:appendix-example}
Let $X$ be a random point on $\cS^2$ with the following distribution: It is uniformly distributed on the lower hemisphere $\mathbb{L}=\{(q_1,q_2,q_3)\in \cS^2 | q_2 \leq 0 \}$ with total mass $0<\alpha <1$ and it has one point mass at the north pole $\mu = (0,1,0)$ with probability $1-\alpha$. Writing $\ell_t(x)=\ell_{t,2}(x)$, the log-likelihood function for $t>0$ and $y_\delta\in \cS^2$ is
\begin{align*}
L_{t}(y_\delta)&=-(1-\alpha)\ln(p(\mu,y_\delta,t))- \frac{\alpha}{\text{vol}(\mathbb{L})} \int_{\mathbb{L}} \ln(p(x,y_\delta,t))dx \\
&= -(1-\alpha)\ell_t(\langle\mu,y_\delta\rangle_{\R^3})- \frac{\alpha}{\text{vol}(\mathbb{L})} \int_{\mathbb{L}} \ell_t(\langle x,y_\delta\rangle_{\R^3})dx
\end{align*}
The log-likelihood function only depends on $\delta$, thus we can consider it as a function $\widetilde{L}_t : \delta \to L_t(y_\delta)$ and using the method of \cite{eltzner_smeary_2018} we can write the function in terms of crescent integrals
\begin{equation*}
\widetilde{L}_t(\delta)=\widetilde{L}_t(0) + \alpha \Big(\frac{1}{2\pi}\Big)\big(C_+(\delta)-C_-(\delta) \big)- (1-\alpha)\ell_t(\cos\delta)+(1-\alpha)\ell_t(1)
\end{equation*}
where the crescent integrals $C_+(\delta)$ and $C_-(\delta)$ are defined by
\begin{align*}
C_+(\delta) &= 
-\int_{-\pi/2}^{\pi/2} \cos\theta\int_{0}^{\delta} \ell_t(\cos\theta\sin\phi)d\phi d\theta \\
C_-(\delta) &= 
-\int_{-\pi/2}^{\pi/2} \cos\theta\int_{0}^{\delta} \ell_t(-\cos\theta\sin\phi)d\phi d\theta.
\end{align*}
In order to determine the diffusion $t$-means, we begin by calculating the first derivative of $\widetilde{L}_{t}$. 
The derivative of the crescent integral term is
\begin{align*}
\frac{d}{d\delta}(C_+(\delta)-C_-(\delta)) &= \int_{-\pi/2}^{\pi/2} \cos\theta\frac{d}{d\delta} \int_{0}^{\delta}-\ell_t(\cos\theta\sin\phi)+ \ell_t(-\cos\theta\sin\phi) d\phi d\theta
\\&= \int_{-\pi/2}^{\pi/2}\cos\theta (-\ell_t(\cos\theta\sin\delta)+ \ell_t(-\cos\theta\sin\delta)) d \theta
\\ 
&= - \sin\delta\int_{-\pi/2}^{\pi/2} \sin\theta^2(\ell'_t(\cos\theta\sin\delta)+ \ell'_t(-\cos\theta\sin\delta)) d \theta.
\end{align*}
Using the substitution $\theta = \arccos(x)$ we obtain
\begin{align*}
\frac{d}{d\delta}(C_+(\delta)-C_-(\delta)) &= -2\sin\delta\int_{0}^{1} (1-x^2) (\ell'_t(x\sin\delta)+ \ell'_t(-x\sin\delta)) (\arccos)'(x) dx \\
&= -2\sin\delta \int_{0}^{1} \sqrt{1-x^2} (\ell'_t(x\sin\delta)+ \ell'_t(-x\sin\delta)) dx.
\end{align*}

\begin{lemma}\label{convexlemma}
  The function $\ell'_{t,2}$ is convex on $[-1,1]$ for $t>2.12$. 
\end{lemma}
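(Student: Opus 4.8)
The plan is to convert convexity of $\ell'_{t,2}$ into the sign of a third derivative and then reduce matters to an inequality between the leading terms of the underlying Gegenbauer series. Writing $h := h_{t,2}$ and $\ell := \ell_{t,2} = \ln h$, convexity of $\ell'$ on $[-1,1]$ is equivalent to $\ell''' \ge 0$ there. Differentiating $\ell = \ln h$ three times gives $\ell''' = (h''' h^2 - 3 h' h'' h + 2(h')^3)/h^3$, and since the heat kernel is strictly positive we have $h>0$, so it suffices to prove that the numerator $G := h''' h^2 - 3 h' h'' h + 2 (h')^3$ is nonnegative on $[-1,1]$ for $t>2.12$.

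First I would record positivity of the building blocks. Since $t>2.12>1$, Lemma \ref{positivedifferentials} guarantees that $h, h', h''$ and $h'''$ are all strictly positive on $[-1,1]$. This lets me factor $G = h''' h^2 + h'\,\bigl(2(h')^2 - 3 h h''\bigr)$ and observe that the first summand is already positive; hence it is enough to establish the cleaner two-term inequality $2(h')^2 \ge 3 h h''$ on $[-1,1]$, a sufficient condition that discards the small but favorable term $h'''h^2$.

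Next I would make this quantitative using the same leading-term-versus-tail strategy as in Lemma \ref{loghdiff}. Using $\frac{d}{dx}C_l^{1/2} = C_{l-1}^{3/2}$ and its iterates, the leading ($l$ minimal) terms of $h, h', h''$ are the constants $\frac{1}{4\pi}$, $\frac{3}{4\pi}e^{-t}$ and $\frac{15}{4\pi}e^{-3t}$ respectively, so at leading order the desired inequality reads $18\,e^{-2t} \ge 45\,e^{-3t}$, i.e. $e^{-t}\le \frac25$, which already holds comfortably for $t>2.12$. To make the estimate rigorous on all of $[-1,1]$ I would, exactly as in Lemma \ref{loghdiff}, bound the tails of each series: using $|C_l^\alpha(x)|\le C_l^\alpha(1)$ together with the geometric ratio bound (if $|a_{n+1}/a_n|<r$ then $\sum a_i \le \frac{1}{1-r}|a_0|$), I would sandwich each derivative between its leading term times $(1-\epsilon_n)$ from below and $(1+\epsilon_n)$ from above, with $\epsilon_n = \epsilon_n(t)$ decreasing in $t$. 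Substituting a lower bound for $2(h')^2$ and an upper bound for $3hh''$ reduces $2(h')^2 \ge 3hh''$ to an explicit inequality of the form $18(1-\epsilon_1)^2 e^{-2t} \ge 45 (1+\epsilon_0)(1+\epsilon_2) e^{-3t}$, whose solution is $t>2.12$.

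The main obstacle is this last step. Because the Gegenbauer polynomials oscillate in sign on $[-1,1]$, the leading term of each series is not automatically dominant pointwise, so the two-sided tail bounds must be sharp enough to certify $2(h')^2 \ge 3hh''$ uniformly in $x$; and it is precisely the interplay of the resulting $\bigl(1\pm\epsilon_n(t)\bigr)$ factors with the clean leading threshold $e^{-t}\le\frac25$ that inflates the cutoff from the leading-order value $\ln(5/2)\approx 0.92$ up to the stated $2.12$. Care is also needed in choosing the ratio constants $r$ so that the geometric estimates remain valid all the way down to $t$ near $2.12$.
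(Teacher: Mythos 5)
Your proposal is correct, and it takes a genuinely different algebraic route from the paper's proof, even though both arguments reduce convexity to the sign of $\frac{d^3}{dx^3}\ell_{t,2}$ and both lean on the same two supporting lemmas together with the leading-term-versus-tail machinery for the Gegenbauer series. The paper keeps all three products in play: it lower-bounds $f_t'''(x)f_t(x)$ by $\frac14\frac{d^3}{dx^3}f_{t,2}^{3,3}(x)f_{t,2}^{0,0}(x)$ via the domination argument of Lemma \ref{positivedifferentials}, upper-bounds $|f_t''(x)f_t'(x)|$, recycles the bound on $f_t''(x)f_t(x)-f_t'(x)^2$ from Lemma \ref{loghdiff}, and then solves one explicit exponential inequality to obtain the cutoff $t>2.12$. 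You instead use positivity of $h'''$ (Lemma \ref{positivedifferentials}, valid since $t>2.12\geq 1$) to discard the term $h'''h^2$ outright and reduce everything to the single two-term inequality $2(h')^2\geq 3hh''$, whose leading-order form $18e^{-2t}\geq 45e^{-3t}$ gives the transparent threshold $e^{-t}\leq\frac25$; the stated cutoff $2.12$ is then needed only to make the tail estimates rigorous, and it holds with considerable slack. This buys simplicity — one sufficient inequality instead of three coupled bounds — and the cost of discarding $h'''h^2$ is negligible, since that term is of order $e^{-6t}$ while the retained terms are of orders $e^{-3t}$ and $e^{-4t}$. A further point in your favor: you expand $(\ln h)'''$ correctly as $\bigl(h'''h^2-3hh'h''+2(h')^3\bigr)/h^3$, whereas the paper's displayed formula for $\frac{d^3}{dx^3}\ell_{t,2}$ drops a factor of $h_t'/h_t$ on its second term, and its target inequality $f_t'''f_t-f_t''f_t'-2\bigl(f_t''f_t-(f_t')^2\bigr)>0$ inherits that slip (it is homogeneous of the wrong degree in $h$ to be the numerator of $\ell'''$); the paper's strategy is repairable, but as written your decomposition is the one that actually certifies $\ell'''\geq 0$.
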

\begin{proof}
  Adopting the notation in the proof of Lemma \ref{loghdiff} and writing $h_t=h_{t,2}$ and $f_t = f_{t,2}$, the second derivative of $\ell'_{t,2}$ is 
  \begin{equation*}
  \frac{d^3}{dx^3}\ell_{t,2} = \frac{h_t'''(x)h_t(x)-h_t''(x)h'_t(x)}{h(x)^2} - 2 \frac{h_t''(x)h_t(x)-h_t'(x)^2}{h(x)^2}
  \end{equation*}
  so we have completed the proof if we show that for all $t>1.1$ it holds that 
  \begin{equation*}
  f_t'''(x)f_t(x)-f_t''(x)f'_t(x)- 2 (f_t''(x)f_t(x)-f_t'(x)^2) >0. 
  \end{equation*}
  In the proof of Lemma \ref{loghdiff}, we showed an upper bound for $(f_t''(x)f_t(x)-f_t'(x)^2)$ if $t>\delta(2)$. Using the strategy from the proof of Lemma \ref{positivedifferentials}, we can ensure the bound $f_t'''(x)f_t(x) > \frac{1}{4}\frac{d^3}{dx^3}f_{t,2}^{3,3}(x)f_{t,2}^{0,0}(x)$. For this to hold, we must find a limit on $t$ such that $\frac{d^i}{dx^i}f_{t,2}^{i,i}(x)> 3^l|\frac{d^i}{dx^i}f_{t,2}^{i+l,l}(x)|$ holds for $i=0,3$ and all $l\geq 1$ and $x\in [-1,1]$. This is done by solving the inequality 
  \begin{equation*}
  (2n+1) \geq 3^le^{-(l^2+2ln+l)\frac12t}(2(n+l)+1)\frac{(1+2n)_l}{l!}
  \end{equation*}
  for all $l\geq 1$.
  Solving for a bound on $t$, we see that the inequality holds for all $l\geq 1$ when $t>2.12$. Due to the generous bound on $t$, we can improve the bounds obtained in the proof of Lemma \ref{loghdiff} and obtain $|f_t''(x)f'_t(x)| < \frac{9}{4}\frac{d^2}{dx^2}f_{t,2}^{2,}(x)\frac{d}{dx}f_{t,2}^{1,1}(x)$. So we have reduced the problem to determining the bound on $t$ such that 
  \begin{equation*}
  \frac{1}{4}\frac{d^3}{dx^3}f_{t,2}^{3,3}(x)f_{t,2}^{0,0}(x)- \frac{9}{4}\frac{d^2}{dx^2}f_{t,2}^{2,}(x)\frac{d}{dx}f_{t,2}^{1,1}(x)-
  2\left(2f_{t,m}^{0,0}(x)\frac{d^2}{dx^2}f_{t,m}^{2,2}(x)- \left(\frac34 \frac{d}{dx}f_{t,m}^{1,1}(x)\right)^2\right)
  \end{equation*}
  is positive. Writing out the terms, this is equivalent to
  \begin{align*}
  e^{-(2m+1)\frac12t}(m+3)- (m+3)e^{-(m+1)\frac12t}(9(m+1)+16) +3(m+1)>0
  \end{align*}
  which holds for all $t>2.12$. 
\end{proof}

By Lemma \ref{convexlemma} the map $\ell_t'$ is convex for $t>2.12$, which implies that $2\ell'_t(0) \leq \ell_t'(x) +\ell_t'(-x)$ holds for all $x\in [-1,1]$. This means that the integral can be bounded from below 
\begin{equation*}
\frac{d}{d\delta}(C_+(\delta)-C_-(\delta)) \geq -2\sin\delta \int_{0}^{1} 2\sqrt{1-x^2} \ell'_t(0) dx = -\pi\sin\delta\ell_t(0).
\end{equation*}
\begin{figure}[ht]
  \centering
  \begin{subfigure}{.5\textwidth}
    \centering
    \includegraphics[height =4.55cm, trim={1 0 0 0},clip]{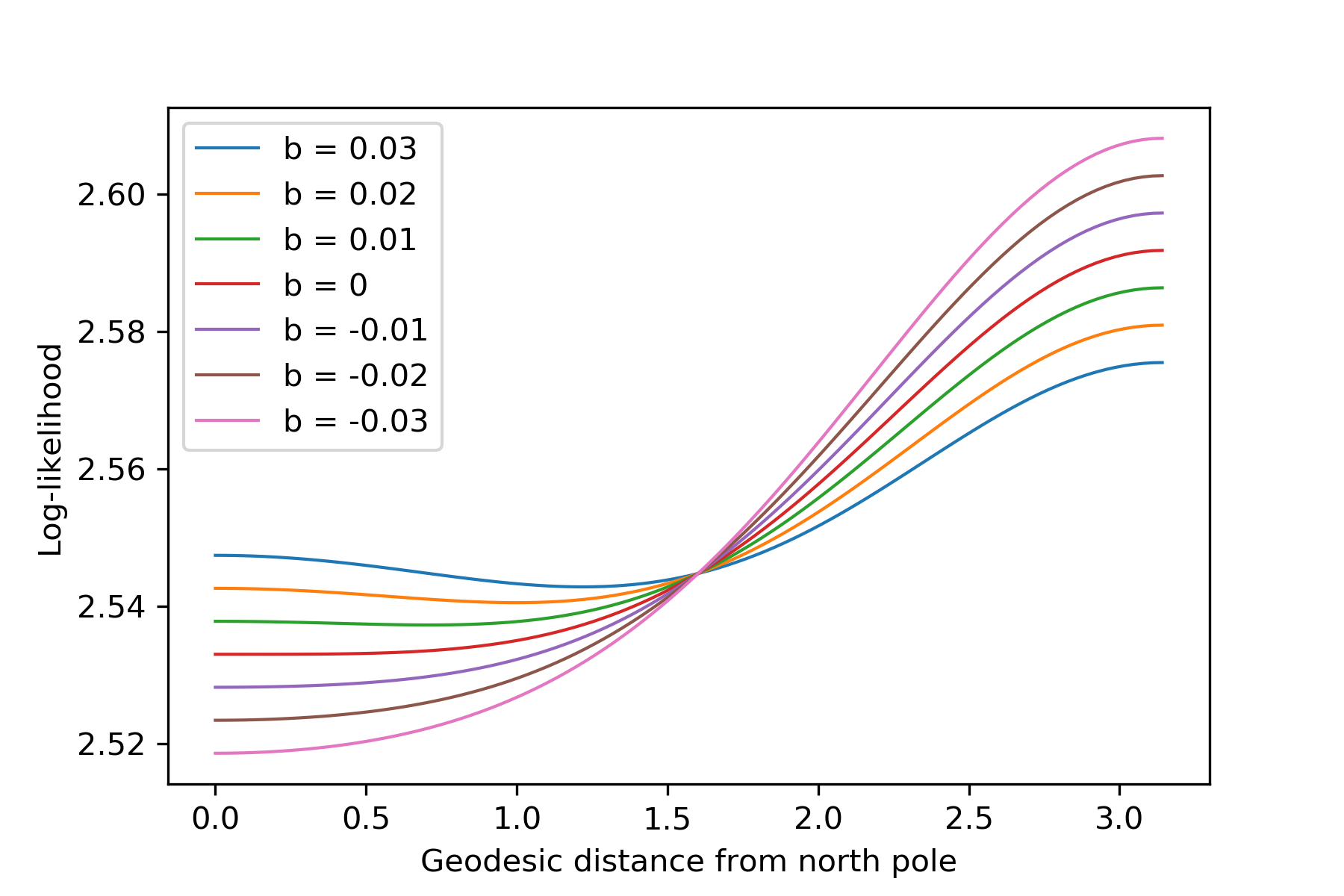}
    \caption{The negative log-likelihood function for \\ $d=2$, $t=2.2$ and $\alpha = \Sigma(t)+b$.}
    \label{fig:sub1}
  \end{subfigure}\hfill
  \begin{subfigure}{.5\textwidth}
    \centering
    \includegraphics[height =4.5cm, trim={0 0 0 0},clip]{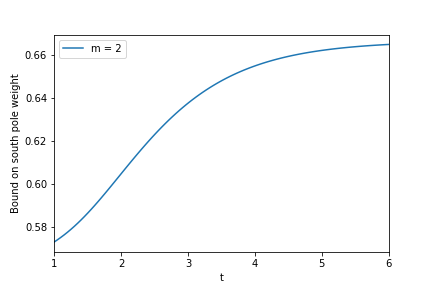}
    \caption{The critical values $\Sigma(t)$ for which the diffusion $t$-mean is $\mu$ on the $2$-sphere. }
    \label{1PoleUnifPlot}
  \end{subfigure}\hfill
  \caption{The distribution of Example 7 has a unique diffusion $t$-mean when $\alpha\leq\Sigma(t)$ for $t\geq 2.12$ and the estimator is 2-smeary at the critical value $\Sigma(t)$ plotted in panel~(b). In panel~(a) we see the log-likelihood function for different values of $\alpha$ around the critical value.}
\end{figure} 
Finally, the crescent integral bound can be used to bound the derivative
\begin{align*}
\widetilde{L}_t'(\delta) 
&\geq \sin\delta \left((1-\alpha)\frac{h_t'(\cos\delta)}{h_t(\cos\delta)}-\alpha \frac{h_t'(0)}{2h_t(0)} \right).
\end{align*}
Since $2.12>\delta(2)$, it follows by Lemma \ref{loghdiff} that the lower bound is increasing in $\delta$. This implies that the derivative $\widetilde{L}'_{\alpha,t}(\delta)$ is positive on $(0,\pi)$ whenever $\alpha \leq \Sigma(t)$ where
\begin{equation*}
\Sigma(t) = \frac{2h_t'(1)h_t(0)}{h'_t(0)h_t(1) + 2h'_t(1)h_t(0)}. 
\end{equation*}
It follows directly that $\mu$ is the unique diffusion $t$-mean when $t>2.12$ and $\alpha \leq \Sigma(t)$. Furthermore, the second derivative at $\delta =0$
\begin{equation*}
\widetilde{L}''_t(\delta) = (1-\alpha)\frac{h_t'(1)}{h_t(1)}-\alpha \frac{h_t'(0)}{2h_t(0)} 
\end{equation*}
vanishes if and only if $\alpha = \Sigma(t)$. By Definition 4.6 the estimator is thus at least $2$-smeary for this $\alpha = \Sigma(t)$ and that the regular CLT rate holds for $\alpha < \Sigma(t)$. 

\bibliographystyle{Chicago}
\bibliography{BibPaper}

\end{document}